\theoremstyle{plain}
\newtheorem{thm}{Theorem}[section]
\newtheorem{corollary}[thm]{Corollary}
\newtheorem{lemma}[thm]{Lemma}
\newtheorem{proposition}[thm]{Proposition}
\newtheorem*{thm*}{Theorem}
\newtheorem*{corollary*}{Corollary}
\newtheorem*{lemma*}{Lemma}
\newtheorem*{ld*}{Lemma/Definition}
\newtheorem*{proposition*}{Proposition}
\newtheorem*{assumption*}{Assumption}
\theoremstyle{definition}
\newtheorem{definition}[thm]{Definition}
\newtheorem{example}[thm]{Example}
\newtheorem*{definition*}{Definition}
\newtheorem*{example*}{Example}
\newtheorem*{xca*}{Exercise}
\newtheorem*{claim*}{Claim}
\newtheorem*{fact*}{Fact}
\newtheorem*{notation*}{Notation}
\newtheorem*{construction*}{Construction}
\newtheorem*{ack*}{Acknowledgements}
\newtheorem*{question*}{Question}
\newtheorem*{problem*}{Problem}
\newtheorem*{conjecture*}{Conjecture}
\theoremstyle{remark}
\newtheorem{remark}[thm]{Remark}
\newcommand{\C}{\mathbb{C}}
\newcommand{\Q}{\mathbb{Q}}
\newcommand{\R}{\mathbb{R}}
\DeclareMathOperator{\rk}{rk}
\DeclareMathOperator{\Tr}{Tr}
\DeclareMathOperator{\SL}{SL}
\DeclareMathOperator{\Hom}{Hom}
\DeclareMathOperator{\re}{Re}
\DeclareMathOperator{\im}{Im}
\DeclareMathOperator{\dL}{\mathbf{L}}
\DeclareMathOperator{\dR}{\mathbf{R}}
\DeclareMathOperator{\Stab}{Stab}
\DeclareMathOperator{\textch}{ch}												
\newcommand{\ch}[1]{\textch_{{#1}}}
\newcommand{\bfv}{\mathbf{v}}
\newcommand{\bfw}{\mathbf{w}}
\DeclareMathOperator{\zPer}{\leftidx{^0}{Per}{}}
\DeclareMathOperator{\Coh}{Coh}
\newcommand{\sA}{\mathcal{A}}
\newcommand{\sB}{\mathcal{B}}
\newcommand{\sO}{\mathcal{O}}
\newcommand{\sS}{\mathcal{S}}
\newcommand{\sT}{\mathcal{T}}
\newcommand{\sM}{\mathcal{M}}
\newcommand{\sC}{\mathcal{C}}
\newcommand{\sD}{\mathcal{D}}
\newcommand{\sE}{\mathcal{E}}
\newcommand{\sF}{\mathcal{F}}
\newcommand{\PP}{\mathcal{P}}
\newcommand{\st}{\,|\,}					
\newcommand{\AAA}[1]{\textcolor{red}{#1}}
\DeclarePairedDelimiter{\set}{\lbrace}{\rbrace}
\DeclarePairedDelimiter{\pair}{\langle}{\rangle}
\DeclarePairedDelimiter{\norm}{\lVert}{\rVert}
\DeclarePairedDelimiter{\abs}{\lvert}{\rvert}
\begin{document}

\title{Characteristic classes and stability
 conditions for projective Kleinian orbisurfaces}

\author[B. Lim]{Bronson Lim}
\address{BL: Department of Mathematics \\ University of Utah \\ Salt Lake City,
  UT 84102, USA} 
\email{bronson@math.utah.edu}

\author[F. Rota]{Franco Rota}
\address{FR: Department of Mathematics, Rutgers University, Piscataway, NJ 08854, USA}
\email{rota@math.rutgers.edu}


\subjclass[2010]{14E16; 14A20, 14F05}




\keywords{Kleinian singularities, orbisurfaces, Bogomolov inequality, Bridgeland stability conditions}

\begin{abstract}
We construct Bridgeland stability conditions on the derived category of smooth quasi-projective Deligne--Mumford surfaces whose coarse moduli spaces have ADE singularities. This unifies the construction for smooth surfaces and Bridgeland's work on Kleinian singularities. The construction hinges on an orbifold version of the Bogomolov--Gieseker inequality for slope semistable sheaves on the stack, and makes use of the To\"en--Hirzebruch--Riemann--Roch theorem.
\end{abstract}

\maketitle




\section{Introduction}

\subsection*{Bridgeland Stability}
Stability conditions were introduced by Bridgeland in \cite{Bri07_triang_cat}
following work of Douglas on $\Pi$-stability \cite{Dou02}. Since then, the
problem of constructing stability conditions has been investigated successfully
for triangulated categories $\sD$ coming from a variety of different sources. In
fact, stability conditions are completely classified if $\sD$ is the derived
category of a smooth curve (see \cite{Mac07} and references therein) and there
is a procedure to construct stability conditions on derived categories of smooth
projective surfaces (a first construction appears in  \cite{Bri08_k3}, and is
then generalized in \cite{AB13}. See also the survey \cite{MS17} for a thorough
account on the matter). On a related line of investigation, Bridgeland studies
the stability manifold of categories associated with the class of ADE surface
singularities \cite{Bri09_kleinian}. The most recent results on this matter
concern threefolds \cite{BMS16}, \cite{BMSZ17}, \cite{Li19}.

In this work, we extend the general construction for surfaces to
$\sD=D^b(\Coh(\sS))$, where $\sS$ is the canonical stack associated with a
projective surface $S$ with ADE singularities. The main result of the paper is
Theorem \ref{thm_OurStabilityCondition}: it unifies the construction for smooth
surfaces \cite{AB13} and Bridgeland's work on Kleinian singularities
\cite{Bri09_kleinian}. To prove Theorem \ref{thm_OurStabilityCondition} we
develop a strengthening of the Bogomolov--Gieseker inequality for slope
semistable sheaves on $\sS$, and make use of the
To\"en--Hirzebruch--Riemann--Roch theorem \cite{Toe99}.

\subsection*{Notation and conventions} Throughout, we work over the field of complex numbers. We denote by $\pi\colon\sS\to S$ the
canonical stack associated with a surface $S$ with isolated quotient
singularities, and by $f\colon\tilde S\to S$ its minimal resolution. For a
finite subgroup $G\subset \SL_2$, we denote by $\rho_0=\mathbbm 1$ its
trivial representation, and by $\rho_i$, $i=1,...,M$, its non-trivial ones. We
set $N\coloneqq \abs{G}$.

If $X$ is a smooth scheme or algebraic stack, $D(X)\coloneqq D^b(\Coh(X))$
denotes the bounded derived category of coherent sheaves on $X$.

\subsection*{Summary of results}

Let $\sS$ be the canonical stack associated to a projective surface $S$ with a unique Kleinian singularity, and denote by $\tilde{S}$ its minimal resolution. Let $\iota\colon BG\to \sS$ be the associated residual gerbe. If $H$ is an ample divisor on $\sS$, we define the slope of a sheaf $E$ on $\sS$: 
\[
    \mu(E) = \frac{H\cdot \ch 1(E)}{\mathrm{rk}(E)},
\]
and its discriminant $\Delta(E)\coloneqq \ch 1(E)^2-2\ch 0(E)\ch 2(E)$ (the Chern classes are the ones introduced in \cite{Vistoli}).
Tilting with respect to slope, we construct a heart of a bounded t-structure $\mathcal \Coh^{b}(\sS)$, for $b\in \R$. 
 
 To define a suitable central charge, we need to take into account the orbifold cohomology of $\sS$. We have $ [\dL\iota^*E]=\sum_{i=0}^M a_i\rho_i$, and define the \textit{orbifold Chern character} of $E$ as
 \[ 
    \ch{orb}(E)=\left(\ch{}(E), a_0,...,a_{M}\right).
  \] 
To\"en's version of the Riemann Roch theorem (Theorem \ref{thm:tgrr}) involves a function 
\begin{equation}
\label{eq_delta_Ti}
\delta(E)=\sum_ia_iT_i
\end{equation}
where the $T_i$ are rational coefficients that depend on $G$. For $w\in\C$ and $\gamma\in\R$, let $Z_{w,\gamma}$ be the function
\[ Z_{w,\gamma}(E)=Z(\ch{orb}(E))= -\ch 2(E) + w \ch 0(E) + \gamma.\delta(E) +iH.\ch 1 (E). \]

The main result of the paper is Theorem \ref{thm_OurStabilityCondition}:
\begin{thm}Let $N\coloneqq \abs{G}$, and choose parameters $\gamma\in (0,\frac{1}{N-1})$ and $w\in \C$ such that:
\begin{enumerate}[(i)]
    \item $\re w > - \frac{(\im w)^2}{H^2}+ (2+\gamma)D -  (1+\gamma)^2$;
    \item $\re w >\frac12 \frac{(\im w)^2}{H^2}  - \gamma(D-\frac{N-1}{N})>0$.
\end{enumerate} 
Then, the pair $(Z_{w,\gamma}, \Coh^{-\im w}(\sS))$ is a stability condition on $\sD$.
\end{thm}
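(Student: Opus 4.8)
The plan is to verify the three defining properties of a stability condition for the pair $(Z_{w,\gamma},\Coh^{b}(\sS))$ with $b\coloneqq-\im w$: that $Z_{w,\gamma}$ is a stability function on the heart, that the induced slope has the Harder--Narasimhan property, and that the support property holds.

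\emph{Stability function.} Since $a_i(E)\in\Z$ and $T_i\in\Q$, the number $\delta(E)$ is real, so $\im Z_{w,\gamma}(E)=H\cdot\ch 1(E)-b\,\ch 0(E)$; by construction of the tilt this is $\ge 0$ for every $E\in\Coh^{b}(\sS)$. The substance is the boundary locus $\im Z_{w,\gamma}(E)=0$. Writing $0\to H^{-1}(E)[1]\to E\to H^{0}(E)\to 0$ and using that the torsion pair defining $\Coh^b(\sS)$ is built from $\mu$-slope, one sees that $\im Z_{w,\gamma}(E)=0$ forces $H^{0}(E)$ to be a zero-dimensional sheaf and $H^{-1}(E)$ to be $\mu$-semistable torsion-free of slope exactly $b$; a zero-dimensional sheaf is in turn a finite extension of skyscrapers $\sO_q$ at points with trivial stabilizer and of the simple sheaves $\iota_*\rho_i$, $i=0,\dots,M$. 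As $\re Z_{w,\gamma}$ is additive on short exact sequences, it suffices to prove: $\re Z_{w,\gamma}(F)>0$ for every $\mu$-semistable torsion-free sheaf $F$ with $\mu(F)=b$, $\re Z_{w,\gamma}(\sO_q)<0$, and $\re Z_{w,\gamma}(\iota_*\rho_i)<0$ for all $i$. The second is immediate. For the first, expand $\re Z_{w,\gamma}(F)=-\ch 2(F)+\re w\,\ch 0(F)+\gamma\,\delta(F)$ and bound $\ch 2(F)$ from above by combining the classical Bogomolov--Gieseker inequality $\ch 1(F)^2\ge 2\ch 0(F)\ch 2(F)$, the Hodge index estimate $\ch 1(F)^2\le(H\cdot\ch 1(F))^2/H^2=(\im w)^2\ch 0(F)^2/H^2$, and the orbifold Bogomolov--Gieseker inequality established earlier, which bounds $\delta(F)$ below in terms of $\ch 0(F)$; the positivity of $\re Z_{w,\gamma}(F)$ then follows from hypothesis (ii). For the third, one uses the explicit orbifold Chern character: a computation with $\mathbf{R}\pi_*$ and To\"en--Riemann--Roch shows that $\ch{}(\iota_*\rho_i)$ vanishes for $i\ge 1$ and $\ch{}(\iota_*\rho_0)$ is a positive multiple of the class of a point, while $\delta(\iota_*\rho_i)$ is read off from $[\dL\iota^*\iota_*\rho_i]$ via the (affine) McKay data of $G$; the restriction $\gamma\in(0,\tfrac1{N-1})$ together with (ii) is exactly what makes $\re Z_{w,\gamma}(\iota_*\rho_i)<0$ for every $i$, and more precisely sends the entire sublattice spanned by the $\iota_*\rho_i$ into $\HH\cup\R_{<0}$ --- the point at which the construction joins Bridgeland's picture \cite{Bri09_kleinian}.

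\emph{Harder--Narasimhan property and support property.} For the HN property I would invoke the criterion of \cite{Bri07_triang_cat}: it suffices that $\Coh^{b}(\sS)$ be Noetherian and that $Z_{w,\gamma}$ take values in a discrete subset of $\C$ on classes of objects of the heart. Noetherianity follows from that of $\Coh(\sS)$ together with the good behaviour of $\mu$-Harder--Narasimhan filtrations, as in the smooth surface case of \cite{AB13}; discreteness holds because $H\cdot\ch 1(E)-b\,\ch 0(E)$, $\ch 0(E)$ and $\ch 2(E)$ lie in a fixed lattice and $\delta(E)$ in a fixed fractional ideal, both depending only on $\sS$ and $G$. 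For the support property I would produce a quadratic form $Q$ on the numerical Grothendieck group $\mathcal{N}(\sS)$, obtained from the discriminant $\Delta$ corrected by the $\delta$-contributions dictated by the To\"en--Riemann--Roch pairing, that is negative semidefinite on $\ker Z_{w,\gamma}$ and satisfies $Q(E)\ge 0$ for every $Z_{w,\gamma}$-semistable $E$; the latter is proved by reducing, through the structure of the tilt, to $\mu$-semistable sheaves --- for which $Q\ge 0$ is precisely the orbifold Bogomolov--Gieseker inequality --- and then propagating the bound across walls. Hypothesis (i) enters here, to place $\ker Z_{w,\gamma}$ inside the negative-definite cone of $Q$.

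\emph{Main difficulty.} The crux is the stability-function property along $\im Z_{w,\gamma}=0$, and within it the objects supported on the residual gerbe: unlike the smooth case, the zero-dimensional objects of $\Coh^{b}(\sS)$ do not all have proportional numerical classes, so one must verify that $Z_{w,\gamma}$ carries the whole sublattice generated by the $\iota_*\rho_i$ into $\HH\cup\R_{<0}$. This verification depends delicately on the orbifold-cohomology term $\gamma\,\delta$, on the interval $\gamma\in(0,\tfrac1{N-1})$, and on the second inequality of (ii), and it is exactly the ingredient that unifies the general surface construction of \cite{AB13} with Bridgeland's work \cite{Bri09_kleinian} on Kleinian singularities.
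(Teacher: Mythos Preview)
Your overall architecture matches the paper's: verify the stability-function property by case analysis on objects with $\im Z=0$, prove HN via Noetherianity plus discreteness of $\im Z$, and establish the support property via a quadratic form built from the orbifold discriminant $\Delta_{orb}$, with hypothesis~(i) giving negative definiteness on $\ker Z_{w,\gamma}$ and hypothesis~(ii) giving the stability-function step. Two points, however, are not right as stated.

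\textbf{Discreteness and the irrational case.} Your claim that $H\cdot\ch 1(E)-b\,\ch 0(E)$ lies in a fixed lattice is false when $b=-\im w$ (or $H$) is irrational; $\im Z_{w,\gamma}$ is then genuinely not discrete, and your HN argument breaks. The paper handles this by first proving everything under the rationality assumptions $H\in\NS(\sS)_\Q$, $\im w\in\Q$ (where the discreteness you want does hold and the induction-on-$\im Z$ argument for the support property goes through), and only then dropping rationality via Bridgeland's deformation theorem, together with a lemma identifying the heart of any nearby stability condition for which all skyscraper-type objects are stable of phase~1. You need this two-step structure.

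\textbf{The bound on $\delta(F)$.} The inequality $\delta(F)\ge \rk(F)\bigl(D-\tfrac{N-1}{N}\bigr)$ that you need in the stability-function step is \emph{not} the orbifold Bogomolov--Gieseker inequality $\Delta_{orb}\ge 0$; the latter says nothing direct about $\delta$. The paper proves this bound by an elementary separate lemma: write $\pi^*\pi_*F\to F\to M$, observe that $M$ is built from at most $\rk(F)\dim\rho_i$ copies of each $\sO_p\otimes\rho_i$, and use $\delta(\sO_p\otimes\rho_i)=-\dim(\rho_i)/N$ together with $\delta(\pi^*\pi_*F)=\rk(F)D$. The orbifold BG inequality is reserved for the support property, where $Q_0\coloneqq\Delta_{orb}$ is the base quadratic form (augmented by multiples of $(\im Z)^2$ and $(\re Z)^2$ to handle curve- and point-supported torsion). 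Also, a small factual slip: $\ch{}(\iota_*\rho_i)$ does not vanish for $i\ge 1$; one has $\ch 2(\sO_p\otimes\rho_i)=r_i/N$, and the negativity $\re Z_{w,\gamma}(\sO_p\otimes\rho_i)=-\tfrac{r_i}{N}(1+\gamma)<0$ needs only $\gamma>-1$, while the constraint $\gamma<\tfrac1{N-1}$ is what forces $\re Z_{w,\gamma}(\sO_p)<0$; hypothesis~(ii) is not used at this point.
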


The proof hinges on a Bogomolov--Gieseker type inequality involving the \textit{orbifold discriminant}: this is defined through the McKay equivalence $\Phi\colon \sD \xrightarrow{\sim} D^b(\Coh(\tilde{S}))$ \cite{BKR01}, as the form
\[\Delta_{orb}(E)\coloneqq \Delta(\Phi(E)).\] 

We obtain Theorem \ref{thm_discriminant}, which strengthens the usual Bogomolov--Gieseker inequality.
\begin{thm}
Let $ E $ be a $\mu_H$-semistable sheaf on $\sS$. Then, $\Delta_{orb}( E  )\geq 0$.
\end{thm}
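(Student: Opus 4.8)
The plan is to reduce to the case of a $\mu_H$-stable sheaf and then compute $\Delta(\Phi(E))$ from the description of the McKay equivalence $\Phi$ on hearts, using the classical Bogomolov--Gieseker inequality on the smooth projective surface $\tilde{S}$. First note that $E$ is torsion-free, since only torsion-free sheaves carry a $\mu_H$-slope. With the standard normalization, $\Phi$ is a Fourier--Mukai equivalence that is the identity over the open locus where $\pi$ and $f$ are isomorphisms; hence it preserves generic ranks, $\ch 0(\Phi(F))=\rk(F)$, and it carries $\mu_H$ to the slope for the nef and big class $f^{*}H$: writing $\ch 1(\Phi(F))=f^{*}D+\sum_i e_iC_i$ for the exceptional curves $C_i$ of $f$ and using $f^{*}H\cdot C_i=0$, one finds $f^{*}H\cdot\ch 1(\Phi(F))=H\cdot D=H\cdot\ch 1(F)$. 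Thus, if $E$ is $\mu_H$-semistable with $\mu_H$-stable Jordan--Hölder factors $A_1,\dots,A_k$, the objects $\Phi(A_j)$ are the cones in a tower of triangles computing $\Phi(E)$ and all have the same $f^{*}H$-slope, so that, $\ch$ being additive on triangles, the usual quadratic identity for the discriminant together with the Hodge index theorem for $f^{*}H$ gives
\[
\frac{\Delta(\Phi(E))}{\rk(E)}\ \geq\ \sum_{j=1}^{k}\frac{\Delta(\Phi(A_j))}{\rk(A_j)}.
\]
It therefore suffices to treat $E$ $\mu_H$-stable.

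For such $E$, the Bridgeland--King--Reid \cite{BKR01}/Van den Bergh picture identifies $\Coh(\sS)$ with the category $\zPer(\tilde{S}/S)$ of perverse coherent sheaves, so $\Phi(E)$ is, up to shift, a two-term complex $[\,\sF^{-1}\to\sF^{0}\,]$ with $\sF^{-1}$ a sheaf supported on $\bigcup_iC_i$ (whence $\ch 0(\sF^{-1})=0$, $\ch 1(\sF^{-1})=\sum_ib_iC_i$) and with $R^{1}f_{*}\sF^{0}=0$. Expanding $\ch(\Phi(E))=\ch(\sF^{0})-\ch(\sF^{-1})$,
\[
\Delta(\Phi(E))=\Delta(\sF^{0})-2\,\ch 1(\sF^{0})\cdot\Bigl(\textstyle\sum_ib_iC_i\Bigr)+\Bigl(\textstyle\sum_ib_iC_i\Bigr)^{2}+2\,\rk(\sF^{0})\,\ch 2(\sF^{-1}).
\]
The first term is controlled by classical Bogomolov--Gieseker once one knows that $\sF^{0}$ is $\mu_{f^{*}H}$-semistable, which should follow from $\mu_H$-stability of $E$ by pulling destabilizing subobjects of $\Phi(E)$ back through $\Phi^{-1}$ (a point that needs care, since a subsheaf of $\sF^{0}$ need not lie in $\zPer(\tilde{S}/S)$). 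The remaining three terms are the local correction at the Kleinian point; here $(\sum_ib_iC_i)^{2}\le 0$ by negative-definiteness of the ADE intersection matrix $(C_i\cdot C_j)$, and $\ch 2(\sF^{-1})\le 0$ because $f_{*}\sF^{-1}=0$ forces $\ch 2(\sF^{-1})=\chi(\sF^{-1})=-h^{1}(\sF^{-1})$ (using $K_{\tilde{S}}\cdot C_i=0$ and Riemann--Roch), so the content is to show that $\Delta(\sF^{0})$ together with the cross term $-2\,\ch 1(\sF^{0})\cdot\sum_ib_iC_i$ outweighs these, using the defining properties of $\zPer(\tilde{S}/S)$ and the constraint that $\sF^{-1}$ and $\sF^{0}$ come from the single sheaf $E$.

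This last balancing is the main obstacle: because $\Phi$ does not preserve $\Coh$, $\Phi(E)$ is a genuine complex and Bogomolov--Gieseker cannot be quoted directly, and the competition between the positive contribution of $\sF^{0}$ and the negative self-intersection of the exceptional part $\sF^{-1}$ has to be made effective; moreover establishing the $\mu_{f^{*}H}$-semistability of $\sF^{0}$ inside $\zPer(\tilde{S}/S)$ is itself nontrivial. Two routes sidestep the perverse-sheaf bookkeeping. One is to prove an orbifold Mehta--Ramanathan restriction theorem on $\sS$, restrict $E$ to a general orbi-curve in $|nH|$ with $n\gg 0$ (where $\mu_H$-semistability is preserved) and run the Hodge-index form of Bogomolov's argument there, at the cost of the technical input --- flatness and genericity of the orbi-curve, semistable restriction --- needed in the stacky setting. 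The other, in the spirit of the use of To\"en--Hirzebruch--Riemann--Roch elsewhere in the paper, is to express $\Delta(\Phi(E))$ in terms of $\Delta(E)$ and the orbifold coefficients $a_i$ via orbifold Grothendieck--Riemann--Roch, reducing the statement to the ordinary Bogomolov--Gieseker bound on $\sS$ plus an analysis of the resulting correction terms.
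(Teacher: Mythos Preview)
Your decomposition of $\Phi(E)$ into cohomology sheaves $\sF^{-1}$ and $\sF^{0}$ misses the first simplification the paper establishes: when $E$ is torsion-free, $\Phi(E)=\tilde{E}$ is already a \emph{sheaf}, i.e.\ $\sF^{-1}=0$. Indeed, if $H^{-1}(\tilde{E})\neq 0$, applying $\Phi^{-1}$ to the triangle $H^{-1}(\tilde{E})[1]\to\tilde{E}\to H^{0}(\tilde{E})$ produces a nonzero subsheaf of $E$ lying in $\sF_0=\Phi^{-1}(\tilde{\sF}_0[1])$, which is torsion since it pushes forward to zero on $S$. Thus your expansion collapses to $\Delta(\Phi(E))=\Delta(\tilde{E})$, and the obstruction is not that $\Phi(E)$ is a genuine complex but that the sheaf $\tilde{E}$ may acquire torsion supported on the exceptional locus and therefore fail to be $\mu_{f^*H}$-semistable. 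Your attempt to show $\sF^{0}$ semistable runs straight into this, and your balancing problem between $\sF^{-1}$ and $\sF^{0}$ is vacuous as posed.

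The paper instead uses the torsion exact sequence $0\to\tilde{T}\to\tilde{E}\to\tilde{E}'\to 0$ on $\tilde{S}$. One checks that $E'\coloneqq\Phi^{-1}\tilde{E}'$ is again $\mu_H$-semistable and \emph{descends} to $S$; this forces $\tilde{E}'$ to be torsion-free and $\mu_{f^*H}$-semistable (so $\Delta(\tilde{E}')\geq 0$) and makes the cross term $2\,\ch 1(\tilde{E}')\cdot\ch 1(\tilde{T})$ vanish, since $\ch 1(\tilde{E}')$ is pulled back from $S$ and hence orthogonal to the exceptional curves. The remaining content is the inequality $\ch 1(\tilde{T})^2-2\rk(E)\,\ch 2(\tilde{T})\geq 0$, and here is the structural input you are missing: the sheaf $T=\Phi^{-1}\tilde{T}[1]$ is an iterated extension of at most $m\leq\rk(E)$ proper quotients of $G$-clusters, and each such quotient corresponds under $\Phi$ to a sheaf with Chern character $(0,D,-1)$ where $D$ is a positive \emph{root} of the ADE root system (so $D^2=-2$ and any two such satisfy $D_i\cdot D_j\geq -2$). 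This gives $\ch 1(\tilde{T})^2\geq -2m^2$ and $\ch 2(\tilde{T})=-m$, whence the quantity above is at least $-2m^2+2m\rk(E)\geq 0$. Negative-definiteness of $(C_i\cdot C_j)$ alone, without the root constraint and the bound $m\leq\rk(E)$, cannot make this balancing effective. Your Jordan--H\"older reduction and the alternative routes you sketch (orbifold Mehta--Ramanathan, GRR bookkeeping) are plausible detours, but the paper needs none of them.
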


The form $\Delta_{orb}$ also plays a crucial role in proving the support property, as it turns out to be negative definite on the kernel of $Z_{w,\gamma}$ (Lemma \ref{lem_Q0_neg_def}).

In Section \ref{sec_WallCrossing} we study wall-crossing for objects of class $[\sO_x]$, where $x\in \sS$ is a closed point with trivial stabilizer. As a result, we find stability conditions $\sigma$ and $\sigma_0$ such that the corresponding moduli spaces coincide with $S$ and $\tilde{S}$, respectively, and that the wall-crossing morphism
\[ M_{\sigma_0}([\sO_x]) \to M_{\sigma}([\sO_x]) \]
is exactly the minimal resolution of $S$ (Proposition \ref{prop_recoverGHilb}). Section \ref{sec_ComparisonBridgeland} further illustrates the relation between our construction, quiver stability \cite{Kin94}, and \cite{Bri09_kleinian}. 


\begin{remark}
In light of the equivalence $\sD \xrightarrow{\sim} D^b(\Coh(\tilde{S}))$, this work can be compared to some previous results on surfaces. In fact, it is closely related to \cite{Tra17}, where the authors construct stability conditions on smooth surfaces admitting a curve of negative self-intersection. Theorem \ref{thm_OurStabilityCondition} overlaps with \cite[Theorem 5.4]{Tra17} in the $A_2$ singularity case. 

The heart used in \cite{Tra17} is constructed by tilting coherent sheaves twice (this construction also appears in \cite{Tod13}), while working on the stack appears to be a more natural choice, as it only requires one tilt, in accordance with the expectation for surfaces.
\end{remark}



\subsection*{Acknowledgements} We wish to thank Aaron Bertram and Michael Wemyss for the many fruitful conversations on this topic.


\section{Preliminaries}

\subsection{Kleinian orbisurfaces}

\begin{definition}
  An \textit{orbisurface} is a smooth and proper Deligne-Mumford surface such
  that the stacky locus has codimension 2. 
  \label{def:orbisurface}
\end{definition}

For any orbisurface \(\sS\), and geometric point \(s\in \sS\), there is an
\'etale local chart near \(s\):
\[
  j_s\colon [U/\mathrm{st}(s)]\to \sS
\]
where \(U\subset\mathbb{A}^2\) is open and \(\mathrm{st}(s)\) is the stabilizer
group of \(s\) acting through \(\mathrm{GL}_2\). The mapping \(j_s\) induces a
closed embedding
\[
  j_s\colon [\ast/\mathrm{st}(s)]\to \sS
\]
called the \textit{residual gerbe} at \(s\). We denote by \(BG\) the quotient
stack \([\ast/G]\). 

An orbisurface is \textit{Kleinian} if for each \(s\in \sS\), the stabilizer
group acts through \(\mathrm{SL}_2\). And an orbisurface is an
\(A_{N-1}\)-orbisurface if it is Kleinian and the non-trivial
stabilizer groups are cyclic of order \(N\).

Let \(S\) be a surface with Kleinian singularities. Then there exists a Kleinian
orbisurface \(\sS\) and a map \(\pi\colon\sS\to S\) such that:
\begin{itemize}
\item the restriction $  \sS\setminus\pi^{-1}(\mathrm{Sing}(S))\to S\setminus\mathrm{Sing}(S)$ is an isomorphism;
\item $\pi$ is universal among all dominant, codimension
preserving maps to \(S\).
\end{itemize}
The stack \(\sS\) is called the \textit{canonical stack}
associated with the surface \(S\), see \cite{FMN10}.

A line bundle on \(\sS\) is \textit{ample} if it is the pullback of
an ample line bundle on the coarse space \(S\). An orbisurface
is \textit{projective} if the coarse moduli is projective.


\begin{example}
  The weighted projective plane \(\mathbb{P}_{1,1,N}\) has canonical stack the
  stacky weighted projective plane
  \[
    \textbf{P}_{1,1,N} = [(\mathbb{C}^3_{1,1,N}\setminus\{0\})/\mathbb{C}_m]
  \]
  where the subscript indicates the weights of the \(\mathbb{C}^\ast\)-action. That
  is, \(\lambda\in\mathbb{C}^\ast\) acts by \(\lambda(x,y,z) = (\lambda x,\lambda
  y,\lambda^Nz)\). There is a unique stacky point where \(x\) and \(y\)
  are zero with residual gerbe \(B\mu_N\). Thus the stacky weighted
  projective plane is a projective \(A_{N-1}\)-orbisurface.
\end{example}

\begin{example}
  The local model for a surface with an $A_{N-1}$ singularity is the hypersurface
  \[
    S=\{x^2+y^2+z^N=0\}
  \]
  in \(\mathbb{C}^3\). The canonical stack is the \(A_{N-1}\)-orbisurface 
  \[
    \sS = [\mathbb{C}^2/\mu_N]
  \]
  where \(\lambda\in\mu_N\) acts via \( \lambda(u,v) = (\lambda
  u,\lambda^{-1}v)\). 
\end{example}

Although we are primarily interested in the case where there is a unique stacky
point, the following example should be kept in mind.

\begin{example}
  Let \(A\) be an Abelian surface and let \(\mu_2=\langle -1\rangle\) act on
  \(A\) via negation, i.e. \(-1\cdot a = -a\) for all \(a\in A\). There are
  sixteen fixed points of this action. Thus the quotient stack
  \([A/\mu_2]\) is an \(A_1\)-orbisurface with sixteen residual gerbes of type
  \(B\mu_2\).
\end{example}

\subsection{The derived McKay correspondence }\label{sec_McKay_corr}

A Kleinian orbisurface $\sS$ can be interpreted as a \emph{stacky resolution of
singularities} of its coarse moduli space $S$. The derived McKay correspondence
\cite{BKR01} exhibits an equivalence $\Phi$ between the derived category
$D(\sS)$ and that of the minimal resolution $f\colon \tilde{S}\to S$ of $S$.

Let $\tilde{\sC}$ be the abelian subcategory of $\Coh(\tilde{S})$ consisting of
sheaves $E$ such that $\dR f_*(E)=0$, and define a torsion pair:
\begin{align*}
  \tilde{\sT}_0&\coloneqq\set{T\in\Coh(\tilde{S})\st \dR^1f_*(T)=0};\\
  \tilde{\sF}_0&\coloneqq\set{F\in\Coh(\tilde{S})\st f_*(F)=0, \Hom(\tilde\sC,F)=0}.
\end{align*}

The heart of the bounded $t$-structure on $D(\tilde{S})$ obtained by tilting
$\Coh(\tilde{S})$ along the pair above is denoted $\zPer(\tilde{S}/S)$, its
objects are called \textit{perverse sheaves}. The reader is referred to
\cite{Bri02_flops} and \cite{VdB04} for the details on this construction.

The derived Mckay correspondence, $\Phi$, satisfies:
\[ 
  \Phi(\Coh(\sS))\simeq \pair{\tilde{\sF}_0[1],\tilde{\sT}_0} = \zPer(\tilde{S}/S). 
\]
More explicitly, suppose \(S\) has a unique singular point \(p\). Let \(\sS\) be
the associated canonical stack and, abusing notation, \(p\) the lift of the
point \(p\) to \(\sS\). Denote by $C$
the fundamental cycle of $\tilde{S}\to S$, and by $C_i$ its irreducible
components. Then we have
\begin{align*}
  \Phi(\sO_\sS) &=\sO_{\tilde{S}};\\
  \Phi(\sO_p) &= \omega_{C}[1];\\
  \Phi(\sO_p\otimes \rho_i) &= \sO_{C_{i}}(-1), \quad i=1,...,M.
\end{align*} 

We fix a quasi-inverse $\Phi^{-1}$ of $\Phi$ and write $\sF_0\coloneqq
\Phi^{-1}(\tilde{\sF}_0[1])$ and $\sT_0\coloneqq \Phi^{-1}(\tilde{\sT_0})$, so
that 
\[ 
  \Coh(\sS)=\pair{\sF_0,\sT_0}. 
\]
Moreover, the category $\sC$ of sheaves $E$ on $\sS$ such that $\dR\pi_*(E)=0$
satisfies $\sC=\Phi^{-1}\tilde\sC$, and is generated by the sheaves
$\sO_p\otimes\rho_i$, $i\neq 0$.

We finish this section by recalling a definition which will be useful later.
\begin{definition}\label{def_cluster_constellation}
  Let $W$ be a quasi-projective variety, on which a finite group $G$ is
  acting. A $G$-\textit{constellation} on $W$ is a $G$-equivariant sheaf $E$ on $W$ with finite
  support such that $H^0(E)$ is isomorphic to the regular representation of $G$,
  as $G$-representation. A $G$-\textit{cluster} is the structure sheaf $\sO_Z$
  of a subscheme $Z\subset W$ which is also a $G$-constellation.
\end{definition}

If \(G\) is a finite subgroup of
\(\mathrm{GL}_2\) acting on \(\mathbb{C}^2\), then the space of $G$-clusters, denoted
$G$-Hilb$(\C^2)$, is the minimal resolution of $\C^2/G$ \cite{BKR01}. The skyscraper sheaves of points in the exceptional locus correspond under $\Phi^{-1}$
to clusters supported at the origin in $\C^2$.

\subsection{Characteristic classes}
\label{sec_Chern_classes}

From now on, we assume that \(\sS\) is a projective
Kleinian orbisurface with a unique stacky point \(p\in \sS\) and
residual gerbe \(BG=[\ast/G]\). Set \(\iota\colon
BG\hookrightarrow \sS\) the corresponding closed substack. 

We use Vistoli's intersection theory in what follows \cite{Vistoli}. In
particular, Chern classes and Todd classes are defined, as well as a degree map.
The Hodge index theorem still holds, i.e. the intersection
form on \(\mathrm{NS}(\sS)\otimes\mathbb{R}\) is of signature \( (1,r-1)\):

\begin{thm}[Hodge Index Theorem]
    Suppose \(H\) is an ample Cartier divisor on \(\sS\). If \(D\not\equiv 0\) is a divisor
    such that \(D\cdot H=0\) then \(D^2<0\).
    \label{thm:hodge-index}
\end{thm}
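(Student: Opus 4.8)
The plan is to deduce this from the classical Hodge index theorem on the smooth projective surface $\tilde S$, transporting the intersection form of $\sS$ first to the coarse space $S$ and then to $\tilde S$.

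First, I would observe that the coarse moduli map $\pi\colon\sS\to S$ induces an isometric isomorphism $\pi^*\colon \NS(S)\otimes\Q \xrightarrow{\ \sim\ } \NS(\sS)\otimes\Q$. The stacky locus has codimension $2$, so $\pi$ is an isomorphism over a dense open set whose complement has codimension $2$ on both sides; hence restriction of divisor classes is a rational inverse to $\pi^*$, and the projection formula in Vistoli's intersection theory gives $\pi^*\alpha\cdot\pi^*\beta=\alpha\cdot\beta$ (using that $\pi$ is birational). Moreover $H=\pi^*\bar H$ for an ample Cartier divisor $\bar H$ on $S$, by the definition of ampleness on $\sS$, and $H^2=\bar H^2>0$ since $\bar H$ is ample on the projective surface $S$.

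Next I would pass to the minimal resolution $f\colon\tilde S\to S$. As Kleinian singularities are $\Q$-factorial quotient singularities, the Mumford $\Q$-valued pullback $f^*\colon\NS(S)\otimes\Q\to\NS(\tilde S)\otimes\Q$ is defined, injective, and satisfies $f^*D_1\cdot f^*D_2=D_1\cdot D_2$ — this being the definition of the intersection form on the normal surface $S$. Since $f^*D\cdot C_i=D\cdot f_*C_i=0$, the image of $f^*$ is orthogonal to $\langle C_1,\dots,C_M\rangle$, and together with $\rho(\tilde S)=\rho(S)+M$ this yields an orthogonal decomposition
\[
\NS(\tilde S)\otimes\Q \;=\; f^*\!\left(\NS(S)\otimes\Q\right)\ \perp\ \bigoplus_{i=1}^{M}\Q\,[C_i],
\]
whose second summand is negative definite, with Gram matrix minus the ADE Cartan matrix of the singularity. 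The classical Hodge index theorem on $\tilde S$ gives signature $(1,\rho(\tilde S)-1)$ for the form on $\NS(\tilde S)\otimes\Q$; comparing with the decomposition forces $f^*(\NS(S)\otimes\Q)\cong\NS(S)\otimes\Q\cong\NS(\sS)\otimes\Q$ to have signature $(1,r-1)$, $r=\rho(\sS)$.

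Finally, since $H^2>0$, the class of $H$ spans a positive line in this signature-$(1,r-1)$ space, so the form on $H^\perp$ is negative definite; hence every rational $D\not\equiv 0$ with $D\cdot H=0$ has $D^2<0$, and the same follows over $\R$ as the signature is unchanged. The step requiring the most care is the first: checking that $\pi^*$ is genuinely an isometry for Vistoli's intersection product — equivalently, that the residual gerbe contributes nothing to $\NS\otimes\Q$ and that degrees on $\sS$ agree with those on the coarse space. The remaining inputs — negative definiteness of the exceptional configuration of a rational double point, and the Hodge index theorem for smooth projective surfaces — are standard.
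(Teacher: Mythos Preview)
The paper does not supply a proof of this theorem: it is stated as background, with the preceding sentence simply asserting that ``the Hodge index theorem still holds'' in Vistoli's intersection theory. So there is no argument in the paper to compare against.

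Your proposed argument is correct and is the standard way to deduce the result. The chain $\NS(\sS)\otimes\Q\cong\NS(S)\otimes\Q\hookrightarrow\NS(\tilde S)\otimes\Q$ with the orthogonal decomposition $\NS(\tilde S)\otimes\Q=f^*\NS(S)\otimes\Q\,\perp\,\bigoplus_i\Q[C_i]$ is exactly right, and subtracting the negative-definite ADE lattice from the signature $(1,\rho(\tilde S)-1)$ on $\tilde S$ yields $(1,r-1)$ on $\NS(\sS)\otimes\Q$. The one point you flag as delicate---that $\pi^*$ is an isometry for Vistoli's degree map---is indeed the only nontrivial input beyond the smooth case; it follows from the projection formula $\pi_*(\pi^*\alpha\cdot\pi^*\beta)=\alpha\cdot\beta\cdot\pi_*[\sS]$ together with $\pi_*[\sS]=[S]$ (since $\pi$ has generic degree $1$), and the fact that divisor classes on $\sS$ are insensitive to the codimension-$2$ stacky locus. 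Nothing is missing.
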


For $E$ a sheaf, and $H$ an ample divisor class on $\sS$, the
slope of \(E\) with respect to \(H\) is
\[
    \mu(E) = \frac{H\cdot \ch 1(E)}{\mathrm{rk}(E)},
\]
with the convention that $\mu(E)=+\infty$ if $\mathrm{rk}(E)=0$. We say that
$E$ is $\mu$-(semi)stable if for all non-zero proper subsheaves $E'\subset E$
one has $\mu(E')<(\leq) \mu(E/E')$.

Define also the discriminant of \(E\) by
\[
    \Delta(E) =(\ch 1(E))^2 - 2\mathrm{rk}(E)\ch 2(E).
\]

The usual Bogomolov-Gieseker inequality still holds on $\sS$:

\begin{thm}[{\cite[Prop. 4.2.4]{Lie11}}]
    If \(E\) is a \(\mu\)-semistable sheaf, then $\Delta(E)\geq 0$, or equivalently
    \begin{equation}\label{eq:bg-inequality}
        \ch 2(E)\leq \frac{(\ch 1(E))^2}{\mathrm{rk}(E)}.
    \end{equation}
    \label{thm:bg-inequality}
\end{thm}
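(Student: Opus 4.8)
The plan is to reproduce on the stack $\sS$ the classical proof of the Bogomolov--Gieseker inequality for smooth projective surfaces, using Vistoli's intersection theory \cite{Vistoli} throughout. First I would reduce to the case where $E$ is a $\mu$-semistable \emph{vector bundle}: replacing $E$ by its reflexive hull $E^{\vee\vee}$ preserves $\mu$-semistability, and since the cokernel of $E\hookrightarrow E^{\vee\vee}$ is torsion (supported in codimension $2$, of non-negative degree) it does not decrease $\ch 2$, hence does not increase $\Delta$. As reflexive sheaves on the smooth two-dimensional stack $\sS$ are locally free, we may assume $E$ is a $\mu$-semistable vector bundle. (If one prefers, the Jordan--Hölder filtration together with superadditivity of $\Delta/\rk$ — which follows from the Hodge Index Theorem \ref{thm:hodge-index} — reduces further to $\mu$-stable $E$, but this is not needed below.)

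Next comes the Bogomolov tensor trick. For $m\ge 1$ set $F_m:=(E\otimes E^{\vee})^{\otimes m}$; a short Chern-character computation gives $\ch 1(F_m)=0$, $\rk F_m=(\rk E)^{2m}$ and $\Delta(F_m)=2m\,(\rk E)^{4m-2}\,\Delta(E)$, and each $F_m$ is $\mu$-semistable of slope $0$ — tensor products of $\mu$-semistable sheaves being $\mu$-semistable in characteristic zero, a fact which on $\sS$ can itself be extracted by restricting to a general curve (see below) and invoking the curve case. Thus it suffices to establish, for every $\mu$-semistable bundle $F$ on $\sS$ with $\ch 1(F)=0$, an a priori bound
\[
  \Delta(F)\ \ge\ -C\,(\rk F)^2
\]
with $C=C(\sS,H)$ independent of $F$: feeding $F=F_m$ into this bound gives $\Delta(E)\ge -C(\rk E)^2/(2m)$, and letting $m\to\infty$ forces $\Delta(E)\ge 0$.

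To prove the a priori bound I would compare, as $n\to\infty$, two estimates for $h^0(F(nH))$. From below, Riemann--Roch on $\sS$ (Theorem \ref{thm:tgrr}, or the degree map of \cite{Vistoli}) together with the vanishing $H^2(F(nH))=0$ for $n\gg 0$ gives $h^0(F(nH))\ge\chi(F(nH))$, a quadratic polynomial in $n$ whose $n$-independent term is $\rk(F)\chi(\sO_{\sS})+\ch 2(F)$; the vanishing holds because, by Serre duality on $\sS$, a nonzero homomorphism $F(nH)\to\omega_{\sS}$ would have torsion-free image, which is at once a quotient of the $\mu$-semistable sheaf $F(nH)$ — hence of slope $\ge nH^2$ — and a rank-one subsheaf of $\omega_{\sS}$ — hence of slope $\le H\cdot K_{\sS}$ — which is impossible for $n$ large. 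From above, I would restrict to a very general smooth curve $C$ in a sufficiently positive multiple of $|H|$: such a $C$ avoids the stacky point and is therefore an honest smooth projective curve in the smooth locus of $\sS$, and by the Mehta--Ramanathan restriction theorem the restrictions of $F$ and its twists to $C$ are semistable. Iterating the restriction sequences along $C$ down to a sufficiently negative twist with no global sections, and bounding $h^0$ of a semistable bundle on $C$ by Clifford's theorem and Riemann--Roch on $C$, yields an upper bound for $h^0(F(nH))$ which is again quadratic in $n$ with exactly the same $n^2$- and $n^1$-coefficients as $\chi(F(nH))$ and with $n$-independent term $O(\rk F)$. Comparing the two estimates and substituting $\ch 2(F)=-\Delta(F)/(2\rk F)$ produces $\Delta(F)\ge -C(\rk F)^2$.

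The main obstacle is making the restriction theorem available on $\sS$: everything else is a routine transcription of the surface case, but one must verify that the local-to-global argument of Mehta--Ramanathan survives for bundles on the Deligne--Mumford stack $\sS$ when one restricts along divisors pulled back from very ample divisors on the coarse space $S$. Alternatively, one can descend an equivariant Bogomolov inequality from a quotient presentation of $\sS$, exploiting that a connected group preserves the unique maximal destabilizing subsheaf, at the cost of working with a polarization that is only semiample on the presenting space. As indicated in the statement, the result is \cite[Prop.~4.2.4]{Lie11}, which we invoke.
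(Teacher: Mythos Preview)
The paper does not prove this statement at all: it is quoted as \cite[Prop.~4.2.4]{Lie11} and used as a black box, with no argument given. Your proposal likewise concludes by invoking that reference, so in the end you and the paper agree; the classical surface argument you sketch beforehand is a reasonable outline (with the Mehta--Ramanathan step on $\sS$ correctly flagged as the nontrivial point), but there is nothing in the paper to compare it against.
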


The results above only involve a part of the Grothendieck group of $\sS$, and ignore contributions from the residual gerbe $BG$. The Grothendieck group of \(BG\) is free, Abelian and generated by the
irreducible representations of $G$ $\{ \rho_i \,|\,i=0,...,M \}$. For any
perfect complex of sheaves \(E\) on \(\sS\), we have
\[
    [{\dL}\iota^\ast E] = \sum_{i=0}^{M} a_i\rho_i.
\]

\begin{definition}\label{def_orbifold_Chern_char}
  Given a perfect complex  \(E\in \sD(\sS)\), we define the \emph{orbifold Chern
  character} 
  \[ 
    \ch{orb}(E)=\left(\ch{}(E), a_0,...,a_{M}\right).
  \]
\end{definition}

\subsection{The To\"en--Hirzebruch--Riemann--Roch theorem} 

We use a version of the Hirzebruch-Riemann-Roch theorem for smooth projective
Deligne-Mumford stacks due to To\"en \cite{Toe99}. The formula is analogous to
the usual Hirzebruch-Riemann-Roch theorem, but it presents a correction term.
For the convenience of the reader, we give a brief description of the formula,
following \cite[Appendix A]{Tse10}.

Let $I\sS$ denote the inertia stack of $\sS$, and define a map
\[ 
  \rho\colon K(I\sS) \to K(I\sS)\otimes \Q(\mu_\infty) 
\]
as follows: if $E$ is a bundle on $I\sS$ decomposing as a sum
$\bigoplus\limits_{\zeta} E^{(\zeta)}$ of eigenbundles with eigenvalue $\zeta$,
let 
\[ 
  \rho(E)=\sum\limits_{\zeta} \zeta E^{(\zeta)}.
\]
One then defines the weighted Chern character as the composition
\[ 
  \widetilde{\ch{}}\colon K(\sS)\xrightarrow{\sigma^*} K(I\sS) \xrightarrow{\rho}
  K(I\sS) \xrightarrow{\ch{}} H^*(I\sS) 
\]
where $\sigma\colon I\sS \to \sS$ is the projection and $\ch{}$ is the usual Chern
character. The weighted Todd class $\widetilde{\mathrm{Td}}_\sS$ is defined in a
similar way \cite[Def. A.0.5]{Tse10}. Then we have

\begin{thm}[To\"en-Hirzebruch-Riemann-Roch]
    Let \(E\) be a perfect complex of sheaves on \(\sS\), then 
    \[
    \chi(E) = \int\limits_{I\sS}\widetilde{\ch{}}(E).\widetilde{\mathrm{Td}}_\sS =  \int\limits_{\sS}\widetilde{\ch{}}(E).\widetilde{\mathrm{Td}}_\sS + \delta(E)
    \]
    where $\delta(E)\coloneqq \int\limits_{I\sS\setminus
    \sS}\widetilde{\ch{}}(E).\widetilde{\mathrm{Td}}_\sS$ is the aforementioned
    correction term.
    \label{thm:tgrr}
\end{thm}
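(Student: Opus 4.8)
The plan is to obtain Theorem~\ref{thm:tgrr} as a direct consequence of To\"en's Grothendieck--Riemann--Roch theorem for smooth proper Deligne--Mumford stacks \cite{Toe99}, which in the formalism recalled above asserts the single identity
\[
  \chi(E) = \int_{I\sS}\widetilde{\ch{}}(E)\cdot\widetilde{\mathrm{Td}}_\sS .
\]
Thus the first equality in the statement is simply a transcription of To\"en's result into the language of \cite[Appendix~A]{Tse10}, and all that remains is to justify the second equality, i.e.\ to isolate from the integral over the inertia stack the contribution of the identity component $\sS\subset I\sS$.

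First I would record the geometric input. Since $\sS$ is a separated Deligne--Mumford stack, the inertia projection $\sigma\colon I\sS\to\sS$ is finite---it is the base change along the diagonal of the diagonal $\Delta_\sS$, which is proper and unramified---hence in particular separated and unramified. Consequently the identity section $e\colon\sS\to I\sS$ is at once an open immersion (a section of an unramified morphism) and a closed immersion (a section of a separated morphism). So $e$ identifies $\sS$ with an open and closed substack of $I\sS$, yielding a decomposition $I\sS=\sS\sqcup(I\sS\setminus\sS)$ into proper open-and-closed substacks. By additivity of the degree map along this decomposition,
\[
  \int_{I\sS}\widetilde{\ch{}}(E)\cdot\widetilde{\mathrm{Td}}_\sS
  = \int_{\sS}\widetilde{\ch{}}(E)\cdot\widetilde{\mathrm{Td}}_\sS
  + \int_{I\sS\setminus\sS}\widetilde{\ch{}}(E)\cdot\widetilde{\mathrm{Td}}_\sS ,
\]
which is exactly the asserted formula once one sets $\delta(E)\coloneqq\int_{I\sS\setminus\sS}\widetilde{\ch{}}(E)\cdot\widetilde{\mathrm{Td}}_\sS$.

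I would also note, for use later in the paper, that on the identity component every eigenvalue $\zeta$ equals $1$, so $\rho$ acts trivially there and $\widetilde{\ch{}}|_\sS=\ch{}$, $\widetilde{\mathrm{Td}}_\sS|_\sS=\mathrm{Td}_\sS$; thus the first summand is the ordinary Hirzebruch--Riemann--Roch integrand and $\delta(E)$ genuinely measures the stacky correction. A subsequent explicit evaluation of this twisted-sector integral, using that $\sS$ has a single stacky point with residual gerbe $BG$, is what produces the formula $\delta(E)=\sum_i a_iT_i$ of \eqref{eq_delta_Ti} in terms of the orbifold Chern character of Definition~\ref{def_orbifold_Chern_char}.

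The main obstacle is thus not mathematical depth but bookkeeping: one must check that the ingredients of \cite{Tse10}---the eigenbundle decomposition defining $\rho$, the normalization of $\widetilde{\mathrm{Td}}_\sS$, and the identification of $I\sS\setminus\sS$ with the disjoint union of twisted sectors---agree with To\"en's original hypotheses and conventions, so that the cited theorem may be quoted verbatim. Granting that dictionary, the proof reduces to the clopen decomposition of $I\sS$ above together with additivity of integration.
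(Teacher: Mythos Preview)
Your proposal is correct. Note, however, that the paper does not actually prove this theorem: it is stated as a cited result from \cite{Toe99} (in the formulation of \cite[Appendix~A]{Tse10}), with no accompanying proof environment. The second equality is not argued in the paper either; it is implicit in the definition of $\delta(E)$ as the integral over $I\sS\setminus\sS$, together with the decomposition $I\sS=\sS\sqcup(IBG\setminus BG)$ recorded immediately after the theorem. Your write-up makes explicit the one point the paper leaves tacit---that $\sS$ is a clopen component of $I\sS$, so the integral splits---and correctly flags that the only content is bookkeeping between the conventions of \cite{Toe99} and \cite{Tse10}. In short: same approach as the paper, just spelled out where the paper is content to cite.
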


Our short term goal is now to investigate the term $\delta(E)$ in the case of
a Kleinian orbisurface, by computing the weighted Chern characters of
$[\dL\iota^*E]$. The inertia stack of $\sS$ is 
\[ 
  I\sS = \sS \sqcup (IBG \setminus BG) 
\]
where 
\[ 
  IBG \setminus BG = \bigsqcup\limits_{(g)\neq (1)}BC_G(g)
\]
(here, the union is taken over all conjugacy classes $(g)$ of non-trivial elements $g\in G$).
The degree of the Todd class on $IBG\setminus BG$ is given by the formula
\[ 
  \int\limits_{IBG\setminus BG} \widetilde{\mathrm{Td}}_\sS =
  \sum\limits_{(g)\neq (1)}\frac{1}{\abs{C_G(g)}} \cdot \frac{1}{2-\xi_g -
  \xi_g^{-1}} 
\]
where $\xi_g$ and $\xi_g^{-1}$ are the eigenvalues of the action of $g$ on the
tangent space $T_p\sS$ of the stacky point on $\sS$.  This number is computed in
\cite{CT19} to be 
\[
  \delta(\sO_\sS) = \frac{1}{12}\left(\chi_{top}(C_{red})-\frac{1}{\abs{G}}\right),
\]
where $C$ is the fundamental cycle of the minimal resolution (see Section \ref{sec_McKay_corr}). 


The fiber of a sheaf $E$ at \(p\) decomposes as $[\dL\iota^*E]=\sum_{i=0}^M a_i
\rho_i$ where the sum runs over all irreducible representations $\rho_i$ of $G$.
On $BC_G(g)$, the element $g$ acts on $\rho_i$ with eigenvalues $\zeta_i^{(l)}$,
to which correspond eigenspaces $\rho_i^{(l)}$. Therefore, $\dL\iota^*E$
decomposes on $BC_G(g)$ into weighted eigenbundles as $\sum\limits_{i=0}^M
\sum\limits_{l=1}^{r_i} a_i \zeta_i^{(l)} \rho_i^{(l)}$.

Then, the weighted Chern character of $\dL\iota^*E_{|BC_G(g)}$ is given by 
\begin{equation}\label{eq_weighted_Chern_char}
  \widetilde{\ch{}}(\dL\iota^*E_{|BC_G(g)}) = \sum\limits_{i=0}^M \sum_{l=1}^{r_i} a_i \zeta_i^{(l)} =  \sum_{i=0}^M a_i \chi_i(g),
\end{equation}
where $\chi_i\coloneqq \chi_{\rho_i}= \Tr \circ \rho_i$ is the character of the
representation $\rho_i$. Our main interest lies in the following computation:

\begin{lemma}\label{lem_delta_of_skyscrapers}
Let $\rho$ be an irreducible representation of $G$ of dimension $r$. Then the second Chern character of $\sO_p\otimes \rho$ is $\frac{r}{N}$, and 
\begin{equation}
    \delta(\sO_p\otimes \rho)=\begin{cases}
    1-\frac{1}{N} \qquad \text{ if }\rho = \mathbbm 1;\\
    -\frac{r}{N} \qquad \text{ if }\rho\neq \mathbbm 1.
    \end{cases}
\end{equation}
\end{lemma}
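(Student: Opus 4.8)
The plan is to compute both quantities by feeding $\dL\iota^*(\sO_p\otimes\rho)$ into the To\"en--Hirzebruch--Riemann--Roch formula of Theorem \ref{thm:tgrr} and isolating the pieces we want. First I would record the restriction: since $\sO_p\otimes\rho$ is supported set-theoretically at the stacky point and its fiber there is just the representation $\rho$, we have $[\dL\iota^*(\sO_p\otimes\rho)]=\rho$, so in the notation $[\dL\iota^*E]=\sum_i a_i\rho_i$ we get $a_j=1$ for the index $j$ with $\rho_j=\rho$ and $a_i=0$ otherwise. Next, for the second Chern character $\ch2(\sO_p\otimes\rho)$: working on the local model $\sS=[\mathbb A^2/G]$ (or using the \'etale chart $j_p$ near $p$), $\sO_p\otimes\rho$ pulls back to $(\sO_0\otimes\rho)$ on $\mathbb A^2$, and its Chern character there is $r\cdot[\text{pt}]=r\cdot\ch2(\sO_0)$ by the Koszul resolution of the origin in $\mathbb A^2$; dividing by the degree $N=|G|$ of the \'etale cover (equivalently, using that $\int_\sS[\text{pt}_{\text{orb}}]=1/N$) gives $\ch2(\sO_p\otimes\rho)=r/N$.

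For $\delta$, the strategy is to apply Theorem \ref{thm:tgrr} directly to $E=\sO_p\otimes\rho$. Since $\supp(E)=\{p\}$, the integral over the untwisted sector $\sS$ of $\widetilde{\ch{}}(E)\widetilde{\mathrm{Td}}_\sS$ reduces to the degree of $\ch(E)\td_\sS$, which is just $\chi(\sO_p\otimes\rho)$ on the coarse level plus nothing from higher Todd terms because $E$ is $0$-dimensional; concretely $\int_\sS \widetilde{\ch{}}(E)\widetilde{\mathrm{Td}}_\sS=\ch2(E)\cdot(\text{leading Todd}=1) + \ch1(E)\cdot\td_1 + \ch0(E)\cdot\td_2 = r/N$ since $\ch0=\ch1=0$. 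On the other hand, $\chi(\sO_p\otimes\rho)=\dim_{\C}\Hom_{\sS}(\sO_\sS,\sO_p\otimes\rho) = \dim(\rho^G)$, the dimension of the $G$-invariants of $\rho$, because $\sO_p\otimes\rho$ has no higher cohomology and global sections on $BG$ pick out invariants: this is $1$ if $\rho=\mathbbm 1$ and $0$ otherwise. Then $\delta(E)=\chi(E)-\int_\sS\widetilde{\ch{}}(E)\widetilde{\mathrm{Td}}_\sS = \dim(\rho^G) - r/N$, which is $1-\tfrac1N$ when $\rho=\mathbbm 1$ (where $r=1$) and $-\tfrac rN$ when $\rho\neq\mathbbm 1$, exactly the claimed formula.

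Alternatively — and as a consistency check I would include — one can compute $\delta(E)$ from its defining integral $\int_{I\sS\setminus\sS}\widetilde{\ch{}}(E)\widetilde{\mathrm{Td}}_\sS$ using the explicit description $I\sS\setminus\sS=\bigsqcup_{(g)\neq(1)}BC_G(g)$, the weighted Chern character formula \eqref{eq_weighted_Chern_char} giving $\widetilde{\ch{}}(\dL\iota^*E_{|BC_G(g)})=\sum_i a_i\chi_i(g)=\chi_\rho(g)$, and the Todd contribution $\tfrac{1}{|C_G(g)|}\cdot\tfrac{1}{2-\xi_g-\xi_g^{-1}}$ on each sector; summing over $(g)\neq(1)$ and comparing with the Burnside/character-orthogonality identity $\tfrac1N\sum_{g}\chi_\rho(g)=\dim(\rho^G)$ recovers the same answer.

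The main obstacle is bookkeeping rather than conceptual: one must be careful that the "untwisted sector" term in Theorem \ref{thm:tgrr} is genuinely the ordinary $\int_\sS\ch(E)\td_\sS$ with no twisting for objects supported away from the stacky locus's higher-codimension subtleties — here $E$ is a skyscraper-type sheaf at $p$, so I need to justify that $\int_\sS\ch(\sO_p\otimes\rho)\td_\sS=\ch2(\sO_p\otimes\rho)=r/N$ and that this equals neither $\chi$ nor $\delta$ separately but their difference. The only genuinely nontrivial input is the value $\ch2(\sO_p\otimes\rho)=r/N$ in Vistoli's intersection theory, which is where the order of the stabilizer enters; once that is in hand, both parts of the lemma follow by the short computation above.
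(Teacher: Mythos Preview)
Your main argument---computing $\ch 2(\sO_p\otimes\rho)=r/N$ directly on the local chart, computing $\chi(\sO_p\otimes\rho)=\dim(\rho^G)$ as the space of invariants, and then reading off $\delta$ as the difference $\chi-\int_\sS\ch{}\cdot\td$---is correct and in fact more economical than the paper's. The paper runs the computation in the opposite direction: it evaluates $\delta$ directly as the twisted-sector integral, obtaining $\sum_{(g)\neq(1)}\chi_\rho(g)/|C_G(g)|$ after a cancellation between the weighted Chern character of $\dL\iota^*(\sO_p\otimes\rho)$ and the Todd denominator, then evaluates this sum via character orthogonality, and only afterwards extracts $\ch 2$ from $\chi=\ch 2+\delta$. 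Your route trades the twisted-sector bookkeeping for having to know $\ch 2=r/N$ up front; the paper's route gets $\ch 2$ for free but must carry out the character-theoretic cancellation.

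That said, your opening claim $[\dL\iota^*(\sO_p\otimes\rho)]=\rho$ is wrong. The inclusion $\iota\colon BG\hookrightarrow\sS$ is not transverse to the support of $\sO_p$, so the derived pullback sees the full Koszul complex and one has $[\dL\iota^*(\sO_p\otimes\rho)]=(2\cdot\mathbbm 1 - V)\otimes\rho$, with $V$ the standard two-dimensional representation. This error does not damage your main computation, which never actually uses $\dL\iota^*$. But it does break your ``consistency check'': with the correct restriction, the weighted Chern character on $BC_G(g)$ is $(2-\chi_V(g))\chi_\rho(g)$, and it is precisely this factor $2-\chi_V(g)=2-\xi_g-\xi_g^{-1}$ that cancels the Todd denominator to leave $\sum_{(g)\neq(1)}\chi_\rho(g)/|C_G(g)|$. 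With your claimed $[\dL\iota^*E]=\rho$ the Todd factor would survive, and the resulting sum would not evaluate to $\dim(\rho^G)-r/N$; for example, taking $\rho=\mathbbm 1$ would reproduce $\delta(\sO_\sS)=D$ rather than $1-1/N$.
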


\begin{proof}
  This is a local computation and so we can assume \(\sS = [U/G]\) where \(U\)
  is an open subset of \(\mathbb{A}^2\). In this case, we have the equivariant Koszul complex (write $V$ to denote $T_p\sS$ as a representation of $G$)
  \[ 
    0\to \sO_U\otimes \Lambda^2V\cong\sO_U \to \sO_U \otimes V \to \sO_U \to
    \sO_p,
  \]
  which resolves $\sO_p$. Hence,
  \[  
    [\dL\iota^*(\sO_p\otimes \rho)]=(2\cdot \mathbbm 1-V)\otimes \rho. 
  \]
  By Theorem \ref{thm:tgrr} and multiplicativity of characters, the correction term is 
  \begin{align*}
        \delta(\sO_p\otimes \rho) &=
        \sum\limits_{(g)\neq (I)}
        \frac{1}{\abs{C_G(g)}}\cdot\frac{\widetilde{\ch{}}(\dL\iota^*\sO_{p|BC_G(g)})}{2-\xi_g
        - \xi_g^{-1}} \\
        &= \sum\limits_{(g)\neq (I)}
        \frac{1}{\abs{C_G(g)}}\cdot\frac{(2\chi_{\mathbbm 1}(g) -
        \chi_V(g))\chi_\rho(g)}{2-\chi_V(g)}\\
        &= \sum\limits_{(g)\neq (I)} \frac{\chi_\rho(g)}{\abs{C_G(g)}} \\
  \end{align*}
  Denote by $N_g$ the cardinality of the conjugacy class of $g\in G$, and write the orthogonality relation between characters:
  \[ 
    \delta_{\mathbbm 1\rho}= \frac 1N\sum_{g\in
    G}\chi_\rho(g)\overline{\chi_{\mathbbm 1}(g)}=
    \frac{1}{N}\sum_{(g)}N_g\chi_\rho(g)=
    \frac{1}{N}\sum_{(g)}\frac{N}{\abs{C_G(g)}}\chi_\rho(g)=
    \sum_{(g)}\frac{\chi_\rho(g)}{\abs{C_G(g)}},  
  \]
  where $\delta_{\mathbbm 1\rho}=1$ if $\rho=\mathbbm 1$ and 0 otherwise. 

  The summand corresponding to $(g)=(I)$ is $\frac{\chi_\rho(I)}{N}=\frac rN$. Isolating it, one obtains
  \[  
    \sum\limits_{(g)\neq (I)} \frac{\chi_\rho(g)}{\abs{C_G(g)}} =\begin{cases}
    1-\frac{1}{N} \qquad \text{ if }\rho = \mathbbm 1;\\
    -\frac{r}{N} \qquad \text{ if }\rho\neq \mathbbm 1.
    \end{cases}
  \]
  Since
  $\chi(\sS,\sO_p\otimes\rho)=\chi(S,\pi_*(\sO_p\otimes\rho))=\delta_{\mathbbm
  1\rho}$, the statement about second Chern characters follows.

\end{proof}

 \section{A Bogmolov-Gieseker-type inequality for slope semistable sheaves}

Let $\sS$ be a projective Kleinian orbisurface, denote $\pi\colon \sS \to S$ the
structure morphism, and $f\colon \tilde S\to S$ the minimal resolution of $S$.
We keep our standing assumption that \(\sS\) has only one stacky point \(p\)
with residual gerbe \(BG\) and \(G\) acts through \(\mathrm{SL}_2\).

Recall that for a sheaf \(E\) on $\sS$ or $\tilde S$, and $H$ an ample (on $\sS$)
divisor class, the slope of \(E\) with respect to \(H\) is
\[
    \mu_H(E) = \frac{H\cdot \ch{1}(E)}{\ch 0(E)}.
\]
and its discriminant is $ \Delta(E) = (\ch{1}(E))^2 - \ch{2}(E)\ch{0}(E)$.  The
discriminant is non-negative on $\mu_H$-semistable sheaves by
\eqref{eq:bg-inequality}. We seek a form, analog to $\Delta$, which involves the
whole $\ch{orb}$ and enjoys a similar positivity property. We use the notation
of Section \ref{sec_McKay_corr} throughout.

Let $E\in\Coh(\sS)$, define its \textit{orbifold discriminant} $\Delta_{orb}( E
)\coloneqq \Delta(\Phi( E ))$. The goal of this section is to prove the
following:

\begin{thm}\label{thm_discriminant}
  Let $H$ be ample on $\sS$. Let $ E $ be a $\mu_H$-semistable sheaf on $\sS$.
  Let $\tilde E=\Phi  E $ be its image on $S$. Then, $\Delta_{orb}( E
  )=\Delta(\tilde E)\geq 0$.
\end{thm}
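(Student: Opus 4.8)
The plan is to transport the statement to the minimal resolution $\tilde S$ by means of the derived McKay equivalence $\Phi$, and there deduce it from the classical Bogomolov--Gieseker inequality. \textbf{Step 1: transport of semistability.} Let $A$ be the ample class on $S$ with $H=\pi^*A$, and set $L\coloneqq f^*A$, a nef and big class on $\tilde S$. Since the Fourier--Mukai kernel of $\Phi$ is supported over $S$, one has $\dR f_*\circ\Phi\simeq \dR\pi_*$; combined with Grothendieck--Riemann--Roch and the facts that $\pi,f$ are isomorphisms in codimension one and $f$ is crepant, this gives $\ch 0(\Phi E)=\ch 0(E)$ and $L\cdot\ch 1(\Phi E)=H\cdot\ch 1(E)$ for every object $E$. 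Hence $\mu_L(\Phi E)=\mu_H(E)$, and since $\Phi$ restricts to an exact equivalence $\Coh(\sS)\xrightarrow{\sim}\zPer(\tilde S/S)$ carrying subobjects to subobjects, a $\mu_H$-semistable sheaf $E$ is carried to a $\mu_L$-semistable object $\tilde E=\Phi E$ of the perverse heart. (We may assume $\ch 0(E)>0$, the inequality being classically a statement about torsion-free sheaves.)

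\textbf{Step 2: the perverse object is a sheaf.} Resolving $\tilde E$ along the torsion pair $(\tilde\sF_0[1],\tilde\sT_0)$ yields a short exact sequence in $\zPer(\tilde S/S)$
\[
0\longrightarrow Q[1]\longrightarrow \tilde E\longrightarrow T\longrightarrow 0,\qquad Q=H^{-1}(\tilde E)\in\tilde\sF_0,\quad T=H^0(\tilde E)\in\tilde\sT_0,
\]
with $Q,T\in\Coh(\tilde S)$ and $Q$ set-theoretically supported on the exceptional fibre, so $\ch 0(Q)=0$. Then $Q[1]$ has rank zero, hence $\mu_L(Q[1])=+\infty$, while $\tilde E/Q[1]=T$ has the same positive rank as $\tilde E$; so $\mu_L$-semistability of $\tilde E$ forces $Q=0$, and $\tilde E=T$ is a coherent sheaf lying in $\tilde\sT_0$. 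Crucially, $T$ need not be $\mu_L$-semistable — nor even torsion free — as an ordinary sheaf: $T$ may carry torsion supported on the $(-2)$-curves $C_i$ which fails to lie in $\tilde\sT_0$, hence is not a sub-object of $T$ in $\zPer(\tilde S/S)$ and is not excluded by semistability there. So one cannot merely quote Langer's inequality.

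\textbf{Step 3: conclusion, and the main obstacle.} What remains is a Bogomolov--Gieseker inequality for a $\mu_L$-semistable object $T$ of the perverse heart $\zPer(\tilde S/S)$ of positive rank; this is the crux. I would prove it by comparing $T$ with an honest torsion-free $\mu_L$-semistable sheaf on $\tilde S$: using that $\tilde\sC=\pair{\sO_{C_1}(-1),\dots,\sO_{C_M}(-1)}$ generates the $\dR f_*$-acyclic objects, one extracts from the perverse structure a filtration whose graded pieces outside $\tilde\sC$ glue to a torsion-free $\mu_L$-semistable sheaf $T'$, bounds the difference $\ch{}(T)-\ch{}(T')$ by exceptional classes, and invokes the classical inequality $\Delta(T')\geq 0$ for the nef and big polarisation $L$. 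The difficulty is exactly this comparison: the exceptional curves contribute negatively to $(\ch 1(\tilde E))^2$ by negative-definiteness of their intersection form (Theorem \ref{thm:hodge-index}), so the estimate is not formal, and one must use the defining vanishings of $\tilde\sT_0$ and $\tilde\sF_0$ — equivalently, via the To\"en--Hirzebruch--Riemann--Roch theorem (Theorem \ref{thm:tgrr}), the identity $\chi(\tilde E)=\chi(E)$ — to see that the correction coming from the exceptional locus cannot make $\Delta$ negative.
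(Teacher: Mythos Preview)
Your Steps 1 and 2 are correct and parallel the paper's setup: the transport of $\mu$-semistability to the perverse heart via $\Phi$, and the observation that $\tilde E$ is an honest sheaf when $E$ is torsion-free, are both in the paper (the latter is the lemma immediately preceding Lemma~\ref{lem_torfree_iff_torfree+descends}).

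Step 3, however, remains a plan rather than a proof, and the missing content is precisely what makes the theorem nontrivial. You propose to pass to a torsion-free $\mu_L$-semistable sheaf $T'$ and bound the exceptional correction, but you do not say what $T'$ is or how the bound goes; neither the defining vanishings of $\tilde{\sT}_0,\tilde{\sF}_0$ nor the identity $\chi(\tilde E)=\chi(E)$ produce such a bound by themselves. The paper takes $\tilde E'=\tilde E/\tilde T$ (the torsion-free quotient) and supplies two ingredients you do not identify. First, $\tilde E'$ \emph{descends} to $S$ (Lemma~\ref{lem_torfree_iff_torfree+descends}), so $\ch 1(\tilde E')$ is orthogonal to every exceptional curve and the cross-term $2\,\ch 1(\tilde E')\cdot\ch 1(\tilde T)$ in the expansion of $\Delta(\tilde E)$ vanishes; it is also this descent that lets one invoke Bogomolov--Gieseker for $\tilde E'$ (Corollary~\ref{cor_descend+sst_gives_thm}). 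Second --- and this is the heart of the argument --- the sheaf $T=\Phi^{-1}(\tilde T[1])$ on $\sS$ is an extension of at most $r=\rk(E)$ \emph{proper quotients of clusters}, and each such quotient corresponds under $\Phi$ to a sheaf $L_D\subset\omega_C$ with $\ch{}(L_D)=(0,D,-1)$ where $D$ is a \emph{positive root} of the associated ADE root system (Lemma~\ref{lem_structureQuotientCluster}). The root condition forces $D_i\cdot D_j\geq -2$ for any two such cycles, whence with $m\leq r$ factors one gets
\[
\ch 1(\tilde T)^2 - 2r\,\ch 2(\tilde T)\ \geq\ -2m^2 + 2rm\ \geq\ 0
\]
(Lemma~\ref{lem_inequality_on_LDs}), and $\Delta(\tilde E)\geq\Delta(\tilde E')\geq 0$. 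This root-theoretic structure of the torsion is the essential mechanism, and nothing in your Step 3 points at it.
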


First, observe the following lemma:

\begin{lemma}
  Let $E$ be a torsion-free sheaf on $\sS$, then $\tilde{E}$ is a sheaf.
\end{lemma}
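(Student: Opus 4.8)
I would deduce this straight from the two descriptions of the derived McKay correspondence recalled in Section~\ref{sec_McKay_corr}. The equivalence $\Phi$ identifies $\Coh(\sS)$ with $\zPer(\tilde S/S)=\pair{\tilde{\sF}_0[1],\tilde{\sT}_0}$, and $\zPer(\tilde S/S)$ is the tilt of $\Coh(\tilde S)$ at the torsion pair $(\tilde{\sT}_0,\tilde{\sF}_0)$; hence every object of $\zPer(\tilde S/S)$ has cohomology sheaves only in degrees $-1$ and $0$, with $\mathcal{H}^{-1}\in\tilde{\sF}_0$ and $\mathcal{H}^{0}\in\tilde{\sT}_0$. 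Applying this to $\Phi(E)$ and invoking the truncation triangle
\[ \mathcal{H}^{-1}(\Phi E)[1]\to \Phi E\to \mathcal{H}^{0}(\Phi E)\to \mathcal{H}^{-1}(\Phi E)[2], \]
whose first three terms all lie in $\zPer(\tilde S/S)$, and pulling it back by $\Phi^{-1}$, I obtain a short exact sequence $0\to F'\to E\to T'\to 0$ in $\Coh(\sS)$ with $F'\coloneqq\Phi^{-1}(\mathcal{H}^{-1}(\Phi E)[1])\in\sF_0$ and $T'\coloneqq\Phi^{-1}(\mathcal{H}^{0}(\Phi E))\in\sT_0$. Thus the lemma is equivalent to the vanishing $F'=0$.

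The plan is then to prove that every object of $\sF_0$ is a torsion sheaf supported at the stacky point $p$. Granting this, $F'$ is a torsion subsheaf of the torsion-free sheaf $E$, hence $F'=0$, so $\mathcal{H}^{-1}(\Phi E)=0$ and $\tilde E=\Phi(E)=\mathcal{H}^{0}(\Phi E)$ is a coherent sheaf. To prove the claim, let $F'\in\sF_0$; then $\Phi(F')=\tilde F'[1]$ for a sheaf $\tilde F'\in\tilde{\sF}_0$, and in particular $f_*\tilde F'=0$. Since $f$ is an isomorphism over $S\setminus\mathrm{Sing}(S)$, this forces $\supp\tilde F'$ into the exceptional locus of $f$, so $\dR f_*(\Phi F')=(\dR f_*\tilde F')[1]$ is supported at $\pi(p)$. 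The Bridgeland--King--Reid equivalence \cite{BKR01} intertwines the pushforwards to the common coarse space $S$, i.e. $\dR\pi_*\simeq\dR f_*\circ\Phi$ (which also yields the identity $\sC=\Phi^{-1}\tilde{\sC}$ recalled above), so $\dR\pi_* F'$ is supported at $\pi(p)$; as $\pi$ restricts to an isomorphism over $S\setminus\{\pi(p)\}$, we conclude $F'|_{\sS\setminus\{p\}}=0$, i.e. $F'$ is supported at $p$ and hence torsion.

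Everything except the support statement for $\sF_0$ is formal bookkeeping with the standard and perverse $t$-structures transported through $\Phi$; the one place where genuine geometric input enters — and the step I would be most careful about — is the compatibility of the McKay equivalence with the projections to the coarse moduli space used to pin down $\supp F'$.
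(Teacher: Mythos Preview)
Your argument is correct and follows essentially the same route as the paper: both pull back the truncation triangle of $\tilde E$ in $\zPer(\tilde S/S)$ via $\Phi^{-1}$ to obtain a subsheaf $F'\in\sF_0$ of $E$, and then show $F'$ is torsion using the compatibility $\dR\pi_*\simeq\dR f_*\circ\Phi$. Your treatment of this last step is in fact more careful than the paper's, which asserts $\pi_*(B)=f_*(H^{-1}(\tilde E))=0$; your support argument shows directly that $F'$ is concentrated at $p$, avoiding that slightly imprecise identification.
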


\begin{proof}
If $\tilde{E}$ is not a sheaf, then there is an exact sequence  
\[ 
  H^{-1}(\tilde E )[1]\to \tilde E  \to H^0(\tilde E )
\]
in $\zPer(\tilde S/S)$, where $H^{-1}(\tilde E )$ is a torsion sheaf, since it
lies in $\tilde{\sF}_0$. Applying $\Phi^{-1}$ to the sequence above one obtains
a short exact sequence in $\Coh(\sS)$
\[ 
  B \to  E  \to A. 
\]
with $B\in \sF_0$. We have $\pi_*(B)=f_*(H^{-1}(\tilde{E}))=0$ by definition of $\tilde{\sF}_0$, so $B$ is a torsion sheaf and $ E $ is not torsion-free. 
\end{proof}

\begin{definition}
We say that a sheaf $E$ on $\sS$ (resp. $\tilde{E}$ on $\tilde S$)
\textit{descends to $S$} if the natural map 
\[ 
  \pi^*\pi_*E \to E 
\]
(resp. $f^*f_*E \to E$) is an isomorphism.
\end{definition}

\begin{lemma}\label{lem_torfree_iff_torfree+descends}
  $\tilde{E}$ is a torsion-free sheaf if and only if $ E $ is torsion-free and it descends to $S$. 
\end{lemma}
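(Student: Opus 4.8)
The plan is to run everything through the derived McKay equivalence $\Phi$, using: its exactness for the two hearts $\Coh(\sS)$ and $\zPer(\tilde S/S)$; the values $\Phi(\sO_\sS)=\sO_{\tilde S}$, $\Phi(\sO_p\otimes\rho_i)=\sO_{C_i}(-1)$ for $i\neq0$, $\Phi(\sO_p)=\omega_C[1]$; the compatibility $\dR\pi_*\cong\dR f_*\circ\Phi$; and the fact that $\Phi$ is the identity over the common open locus $\sS\setminus p\cong S\setminus\{p\}\cong\tilde S\setminus\mathrm{Exc}(f)$, so that any phenomenon is detected either directly over that locus or at $p$ via $\Phi$ and the local model $\sS=[\mathbb{A}^2/G]$, $S=\mathbb{A}^2/G$ (with $R=\mathbb{C}[x,y]$, $R^G$). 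The previous lemma, that a torsion-free sheaf on $\sS$ goes to a sheaf on $\tilde S$, will be invoked repeatedly.

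\emph{The implication $\Leftarrow$.} Assume $E$ is torsion-free and descends, so $E\cong\pi^*F$ with $F=\pi_*E$ torsion-free on $S$. Working on the local model, I would first show that $F$ is locally free at $p$: since $\pi$ is an isomorphism off $p$ and $\pi^*F=E$ is torsion-free, $F$ can have no non-free behaviour at $p$, because the pullback along $\mathbb{A}^2\to\mathbb{A}^2/G$ of a reflexive but non-free $R^G$-module acquires torsion at the origin (this is exactly where the non-flatness of $\mathbb{A}^2\to\mathbb{A}^2/G$, i.e. the failure of $R$ to be free over $R^G$, is used — equivalently one invokes the McKay classification of the indecomposable reflexive $R^G$-modules). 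Hence $F$ is locally free on $S$, so $\dL\pi^*F=\pi^*F=E$ and $\dL f^*F=f^*F$, and the $S$-linearity of $\Phi$ — concretely the projection formula $\Phi(\sO_\sS\otimes\pi^*F)\cong\Phi(\sO_\sS)\otimes f^*F=\sO_{\tilde S}\otimes f^*F$, valid for $F$ perfect on $S$ — gives $\tilde E=\Phi(E)\cong f^*F$, a locally free, hence torsion-free, sheaf on $\tilde S$.

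\emph{The implication $\Rightarrow$.} Assume $\tilde E$ torsion-free; then $\tilde E$ is a sheaf, so $\tilde E\in\tilde\sT_0$ and $E\in\sT_0$. First, $E$ is torsion-free: let $T\subseteq E$ be a nonzero torsion subsheaf. If $\supp T\not\subseteq\{p\}$, then $T|_{\sS\setminus p}$ is a nonzero torsion subsheaf of $E|_{\sS\setminus p}=\tilde E|_{\tilde S\setminus\mathrm{Exc}(f)}$, contradicting torsion-freeness of $\tilde E$. Otherwise $\supp T=\{p\}$, so $T$ has a simple subobject $\sO_p\otimes\rho_i$, and applying $\Phi$ gives a nonzero morphism $\Phi(\sO_p\otimes\rho_i)\to\tilde E$. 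If $i\neq0$ this is a nonzero map of sheaves $\sO_{C_i}(-1)\to\tilde E$; its image is a subsheaf of the torsion-free sheaf $\tilde E$ that is at the same time a quotient of $\sO_{C_i}(-1)$, hence a torsion sheaf supported on $C_i$ — impossible. If $i=0$ it is a nonzero element of $\Hom_{D(\tilde S)}(\omega_C[1],\tilde E)=\Ext^{-1}_{\tilde S}(\omega_C,\tilde E)=0$ — impossible. So $E$ has no nonzero torsion subsheaf.

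It remains to show $E$ descends. Put $F:=\pi_*E=f_*\tilde E$; since $\tilde E$ is torsion-free and $\dR^1f_*\tilde E=0$, $F$ is torsion-free on $S$ and $\dR\pi_*E=F$. The counit $\pi^*F\to E$ is an isomorphism over $S\setminus\{p\}$, so descent is equivalent to the local statement at $p$ that $E$ is locally free there with trivial $G$-action on its fibre $\dL\iota^*E$ (this is the converse of the local analysis used in $\Leftarrow$). I expect this to be the main obstacle. The mechanism I would use is that torsion-freeness of $\tilde E$ constrains the restrictions $\tilde E|_{C_i}$, and that via the dictionary $\Phi(\sO_p\otimes\rho_i)=\sO_{C_i}(-1)$ — which records the fibre class $[\dL\iota^*E]=\sum_i a_i\rho_i$ in terms of how $\tilde E$ meets the curves $C_i$ — this forces $a_i=0$ for all $i\neq0$ and local freeness of $E$ at $p$; concretely, one transports a two-term resolution of $\tilde E$ by sums of $\sO_{\tilde S}$, $\sO_{C_i}(-1)$ and $\omega_C[1]$ through $\Phi^{-1}$ and reads off the local model of $E$ near $p$. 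Granting this, $E\cong\pi^*F$ and the proof is complete.

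The genuinely non-formal input is the local computation on $[\mathbb{A}^2/G]$ relating torsion-freeness on $\tilde S$ to local freeness of $E$ at $p$ together with triviality of its fibre; everything else is bookkeeping with the torsion pair $\Coh(\sS)=\langle\sF_0,\sT_0\rangle$, the exactness of $\Phi$, its values on simple objects, and its compatibility with pushforward to $S$.
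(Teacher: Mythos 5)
There is a genuine gap, and it sits exactly where you yourself flag it: in the implication ``$\tilde E$ torsion-free $\Rightarrow$ $E$ descends'' you only describe a hoped-for mechanism (transporting a two-term resolution of $\tilde E$ through $\Phi^{-1}$ and ``reading off the local model of $E$ near $p$'') without carrying out any step of it. This is the heart of the lemma, not a routine verification: nothing you have written explains why torsion-freeness of $\tilde E$ should force the counit $\pi^*\pi_*E\to E$ to be an isomorphism at $p$, nor why the derived fibre of $E$ should be a multiple of the trivial representation, nor why $E$ should be locally free at $p$ (knowing only the class $[\dL\iota^*E]$ would in any case not suffice for local freeness, so even the sketched reduction is incomplete). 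The paper proves this implication by a quite different, and entirely diagram-theoretic, argument: assuming $E$ torsion-free but not descending, one compares the sequences $\pi^*\pi_*E\to E$ and $\pi^*\pi_*E'\to E'$ (where $E\to E'\to F$ comes from the torsion of $\tilde E$, $F\in\sF_0[-1]$), and uses $\dR\pi_*K=0$ for the cokernels together with $\pi_*F=f_*\tilde F=0$ and the incompatibility $\tilde\sT_0$ versus $\tilde\sF_0[-1]$ to reach a contradiction; no local model or classification of modules is needed. As submitted, your proof of this direction is a plan, not a proof.

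There is also a smaller gap in your ``$\Leftarrow$'' direction: you conclude that $F=\pi_*E$ is locally free at $p$ by invoking that the pullback of a \emph{reflexive} non-free $R^G$-module acquires torsion, but $\pi_*E$ is a priori only torsion-free, not reflexive, so the case of a torsion-free module with finite-length cokernel into its reflexive hull is not covered; you would need an extra argument (e.g.\ a depth/Tor computation showing $\mathrm{Tor}_1^{R^G}(Q,R)\neq 0$ injects into $F\otimes_{R^G}R$ for the finite-length quotient $Q=F^{**}/F$, or something equivalent) before you may apply the $S$-linearity of $\Phi$ to get $\tilde E\cong f^*F$. By contrast, the paper's proof of this direction again avoids local freeness altogether, arguing by contradiction from a hypothetical torsion subsheaf $\tilde F\subset\tilde E$ using only the torsion pair $\Coh(\sS)=\pair{\sF_0,\sT_0}$ and the descent hypothesis. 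The parts of your argument that do work — the identification of simple subobjects $\sO_p\otimes\rho_i$ of a torsion subsheaf of $E$ and the vanishing of $\Hom(\sO_{C_i}(-1),\tilde E)$ and $\Hom(\omega_C[1],\tilde E)$ — give a clean alternative proof that $E$ is torsion-free, but they do not touch the descent statement.
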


\begin{proof}
By the previous lemma, if $ E $ is torsion-free then $\tilde{E}$ is a sheaf. Now
we show that if, additionally, $ E $ descends to $S$, then $\tilde{E}$ is
torsion-free. Suppose $\tilde{E}$ has a torsion subsheaf $\tilde F$, for sake of
contradiction. Then, applying $\Phi^{-1}$ to the sequence $\tilde F\to\tilde
E\to\tilde E'$ and taking the associated long exact sequence of sheaves, one
gets 
\[ 
  H^0(\Phi^{-1}\tilde F) \to  E  \to  E' \to H^1(\Phi^{-1}\tilde F) 
\]
(the other terms in the long exact sequence of cohomology sheaves vanish because
$E$ is a sheaf, and $H^{-1}( E ')=0$ since images of sheaves under $\Phi^{-1}$
may only have cohomologies in degrees 0,1).
The sheaf $H^0(\Phi^{-1}\tilde{F})$ is torsion: this follows from the triangle 
\[ H^0(\Phi^{-1}\tilde{F}) \to \Phi^{-1}\tilde{F} \to H^1(\Phi^{-1}\tilde{F})[-1] \]
and the fact that $H^1(\Phi^{-1}\tilde{F})\in \sF_0$ is torsion, and $\Phi^{-1}\tilde{F}$ has rank 0 since $\Phi$ and $\Phi^{-1}$ preserve ranks. Therefore, either $ E $ has torsion,
or fits in a short exact sequence 
\begin{equation}\label{eq_saturated}
     E  \to  E ' \to F
\end{equation}
where $F\coloneqq \Phi^{-1}\tilde F\in \sF_0[-1]$.
Then, there is a diagram with exact rows and columns
\begin{equation*}
\begin{tikzcd}[row sep=small]
\pi^*\pi_* E  \rar \dar &   E  \rar \dar & 0 \dar \\
\pi^*\pi_* E ' \arrow{r} \dar  &  E ' \arrow[d]\arrow{r}  & K \dar\\ 
M  \arrow{r}  & F\arrow{r}  & K 
\end{tikzcd}
\end{equation*}
where the top right corner is zero since $ E $ descends to $S$, and $M$ is a
repeated extension of pull-backs of $\sO_p$ from $S$. The middle row of the
diagram shows $\dR\pi_*K=0$. If $M\neq 0$, we have $\pi_*M\neq 0$, contradicting
$\pi_*F=f_*\tilde{F}=0$. If $\sM=0$, then $F\simeq K$ but $K\in \tilde{\sT}_0$
while $F\in\tilde{\sF}_0[-1]$.

Conversely, if $ E $ has torsion then $\tilde{E}$ is either not a sheaf, or it
has torsion, so we may assume that $ E $ is torsion-free. Then $ E $ fits in a
sequence \eqref{eq_saturated}, where $F[-1]$ is the image of the torsion of
$\tilde{E}$. If $ E $ does not descend to $S$, we have $K\neq 0$ in the diagram 
\begin{equation*}
\begin{tikzcd}[row sep=small]
\pi^*\pi_* E  \rar \dar &   E  \rar \dar & K \dar{a} \\
\pi^*\pi_* E ' \arrow{r} \dar  &  E ' \arrow[d]\arrow{r}  & K' \\ 
M  \arrow{r}  & F & 
\end{tikzcd}
\end{equation*}
If $F=0$ we must have $\ker(a)\simeq M$, which is a contradiction because $\dR\pi_*K=\dR\pi_*K'=0$ while $\dR\pi_*M\neq 0$. We showed that $\tilde{E}$ has torsion whenever $ E $ has torsion or does not descend.
\end{proof}

Next, we show:

\begin{lemma}\label{lem_sat+semistable__semistable}
  Suppose $ E $ descends to $S$ and is $\mu_H$-semistable, then $\tilde{E}$ is
  $\mu_H$-semistable of the same slope.
\end{lemma}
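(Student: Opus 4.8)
The plan is to reduce the $\mu_H$-semistability of $\tilde E=\Phi E$ to that of $E$ by transporting subsheaves across the minimal resolution $f\colon\tilde S\to S$ and the coarse map $\pi\colon\sS\to S$. If $\rk E=0$ the statement is trivial, since $\Phi$ preserves ranks and both slopes equal $+\infty$; so assume $\rk E\geq1$, whence $E$ is torsion-free, and since $E$ descends to $S$, Lemma~\ref{lem_torfree_iff_torfree+descends} shows that $\tilde E$ is a torsion-free sheaf on $\tilde S$.

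The next step is to record the numerical input, writing $H=\pi^*H_S=f^*H_S$ with $H_S$ ample on $S$. Since Chern classes commute with pullback, for any coherent sheaf $A$ on $S$ one has $\ch 1(\pi^*A)=\pi^*\ch 1(A)$ and $\rk(\pi^*A)=\rk(A)$, and the projection formula for $\pi$ gives $H\cdot\ch 1(\pi^*A)=H_S\cdot\ch 1(A)$; hence $\mu_H(\pi^*A)=\mu_{H_S}(A)$. For a nonzero torsion-free sheaf $\tilde A$ on $\tilde S$, the sheaf $\dR^1f_*\tilde A$ is supported on the single point $p=f(\mathrm{Exc}\,f)$ and hence is $0$-dimensional; together with Grothendieck--Riemann--Roch and the crepancy of the minimal resolution of an ADE singularity ($K_{\tilde S/S}=0$) this gives $\rk(f_*\tilde A)=\rk(\tilde A)\geq1$ (so $f_*\tilde A\neq 0$) and $\ch 1(f_*\tilde A)=f_*\ch 1(\tilde A)$, so that $\mu_{H_S}(f_*\tilde A)=\mu_H(\tilde A)$ by the projection formula for $f$. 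Finally, recall that the McKay equivalence is compatible with the maps to $S$, i.e.\ $\dR f_*\circ\Phi\simeq\dR\pi_*$, and that $\pi_*$ is exact on coherent sheaves; thus $f_*\tilde E=\dR f_*\tilde E=\dR\pi_*E=\pi_*E$, and combining this with the descent isomorphism $\pi^*\pi_*E\cong E$ we obtain $\mu_H(\tilde E)=\mu_{H_S}(\pi_*E)=\mu_H(\pi^*\pi_*E)=\mu_H(E)$, which settles the ``same slope'' assertion.

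For semistability I would take an arbitrary nonzero subsheaf $\tilde F\subseteq\tilde E$; it is torsion-free of rank $\geq1$. Applying the left-exact functor $f_*$ to $0\to\tilde F\to\tilde E$ gives an inclusion $f_*\tilde F\hookrightarrow f_*\tilde E\cong\pi_*E$, and then $\pi^*$ followed by the descent isomorphism produces a map $\pi^*f_*\tilde F\to E$. Its kernel is supported on the stacky gerbe $BG$, hence $0$-dimensional, so it contributes nothing to rank or $\ch 1$; therefore the image of $\pi^*f_*\tilde F$ in $E$ is a nonzero subsheaf of $E$ with slope $\mu_{H_S}(f_*\tilde F)=\mu_H(\tilde F)$. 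Since $E$ is $\mu_H$-semistable, every nonzero subsheaf of $E$ has slope at most $\mu_H(E)$ (the equal-rank case using that $H$ is nef), so $\mu_H(\tilde F)\leq\mu_H(E)=\mu_H(\tilde E)$. As $\tilde F$ was arbitrary, $\tilde E$ is $\mu_H$-semistable of slope $\mu_H(E)$.

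The conceptual content is slight; the work lies in the second paragraph and in accounting for the failure of $\pi$ to be flat. The points to watch are: that $\ch 1$ and rank are blind to the codimension-$2$ stacky locus of $\pi$ and to the fibre of $f$ over $p$, so that $H$-slopes of torsion-free sheaves are preserved under $\pi^*$, $\pi_*$ and $f_*$; that $\dR^1f_*$ of a torsion-free sheaf, being supported at a point, is negligible for rank and $\ch 1$; that the kernel of $\pi^*f_*\tilde F\to E$ (which exists because $\pi^*$ is only right exact) is supported on $BG$ and hence negligible for the slope, which is why one must pass to the image in $E$ rather than claim an inclusion; and the compatibility $\dR f_*\circ\Phi\simeq\dR\pi_*$ of the McKay equivalence with the maps to $S$. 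The semistability hypothesis on $E$ is used only in the final inequality, and descent only in the identification $\pi^*\pi_*E\cong E$ used to reach inside $E$.
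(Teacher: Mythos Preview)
Your argument is correct. It differs from the paper's proof in route, though not in destination. The paper applies $\Phi^{-1}$ directly to a destabilizing short exact sequence $0\to\tilde F\to\tilde E\to\tilde K\to 0$: since $\tilde F$ is torsion-free, Lemma~\ref{lem_torfree_iff_torfree+descends} (in the reverse direction) gives that $F=\Phi^{-1}\tilde F$ is a sheaf, and the cohomological constraints on $K=\Phi^{-1}\tilde K$ force $F\hookrightarrow E$; the slope identity $\mu_H(F)=\mu_H(\tilde F)$ is then asserted without further comment. You instead transport $\tilde F$ through the coarse space via $\pi^*f_*$, and your second paragraph supplies precisely the numerical justification the paper leaves implicit. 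The two constructions in fact agree: since $\Phi^{-1}\tilde F$ descends, one has $\Phi^{-1}\tilde F\cong\pi^*\pi_*\Phi^{-1}\tilde F=\pi^*f_*\tilde F$, so the map $\pi^*f_*\tilde F\to E$ you consider is already injective and your image \emph{is} the paper's $F$. Your caution about a possible $0$-dimensional kernel is harmless overcaution. What your route buys is independence from the reverse implication of Lemma~\ref{lem_torfree_iff_torfree+descends} and a self-contained verification of the slope equalities; what the paper's route buys is brevity.
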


\begin{proof}
  Since $ E $ descends and is torsion-free, $\tilde{E}$ is torsion-free by Lemma
  \ref{lem_torfree_iff_torfree+descends}.  Suppose $\tilde{E}$ is destabilized
  by a sequence $\tilde F\to \tilde E\to \tilde K$ with $\tilde F$ torsion-free.
  Apply $\Phi^{-1}$ to the sequence and get a triangle
  \[ 
    F \to  E  \to K 
  \]
  where $ E $ is a sheaf, $F$ is a sheaf since $\tilde F$ is torsion-free, and
  $K$ has cohomologies in degree 0 and -1. This implies immediately that $K$ is
  just a sheaf, and $F\to  E $ is still injective. However,
  $\mu_H(F)=\mu_H(\tilde F)$ and $\mu_H( E )=\mu_H(\tilde E)$, so $ E $ is also
  unstable. 
\end{proof}


\begin{corollary}\label{cor_descend+sst_gives_thm}
  Let $E\in\Coh(\sS)$, and $\tilde E\coloneqq \Phi(E)$. If $ E $ descends to $S$
  and is $\mu_H$-semistable, then $\Delta(\tilde E)\geq 0$.
\end{corollary}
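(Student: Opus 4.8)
The plan is to chain the two lemmas just proved with the Bogomolov--Gieseker inequality on the smooth surface $\tilde S$. The hypotheses of Corollary~\ref{cor_descend+sst_gives_thm} are precisely those of Lemma~\ref{lem_sat+semistable__semistable}, so I would first apply that lemma to conclude that $\tilde E=\Phi(E)$ is $\mu_H$-semistable of the same slope as $E$; by Lemma~\ref{lem_torfree_iff_torfree+descends} it is moreover torsion-free, since $E$ is torsion-free and descends to $S$. Thus $\tilde E$ is a $\mu_H$-semistable torsion-free sheaf on an honest smooth projective surface, and the Bogomolov--Gieseker inequality gives $\Delta(\tilde E)\ge 0$, which is the assertion since $\Delta_{orb}(E)=\Delta(\Phi(E))$ by definition.

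The one point that needs care is that, with our conventions, the class $H$ on $\tilde S$ is the pullback of an ample class from $S$, hence only nef and big, not ample; so the form of Bogomolov--Gieseker invoked must allow nef polarizations. I would handle this by the usual perturbation: for an ample class $A$ on $\tilde S$ and $t>0$ small, continuity of the slope function forces every Harder--Narasimhan factor of $\tilde E$ with respect to $H+tA$ to have $\mu_H$-slope equal to $\mu_H(\tilde E)$; each such factor is $\mu_{H+tA}$-semistable with $H+tA$ ample, so has nonnegative discriminant by the ample case, and the Hodge index theorem on $\tilde S$ makes the cross-terms in the standard short-exact-sequence identity for $\Delta/\rk$ nonnegative, yielding $\Delta(\tilde E)\ge 0$. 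Alternatively one may simply cite Langer's nef-polarization version of the inequality.

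There is also a route that sidesteps the nef issue altogether: since $E$ descends to $S$ we have $E=\pi^*(\pi_*E)$, and the derived McKay equivalence intertwines $\pi^*$ with $f^*$ (it is $S$-linear and sends $\sO_\sS$ to $\sO_{\tilde S}$), so $\tilde E=f^*(\pi_*E)$ descends to $S$ as well; the discriminant of $\tilde E$ then coincides with that of $E=\pi^*(\pi_*E)$, both being the pullback from $S$ of the same $0$-cycle (using that $\pi$ and $f$ have generic fiber a reduced point and hence preserve degrees of $0$-cycles), so $\Delta(\tilde E)=\Delta(E)\ge 0$ follows directly from Theorem~\ref{thm:bg-inequality} on $\sS$. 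Either way, the only genuine obstacle is this nef-versus-ample discrepancy of $H$ on $\tilde S$; everything else is a direct concatenation of Lemmas~\ref{lem_torfree_iff_torfree+descends} and~\ref{lem_sat+semistable__semistable} with a Bogomolov--Gieseker statement already at hand.
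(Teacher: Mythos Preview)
Your proposal is correct and follows essentially the same route as the paper: apply Lemma~\ref{lem_sat+semistable__semistable} to get that $\tilde E$ is $\mu_H$-semistable on $\tilde S$, then invoke the Bogomolov--Gieseker inequality there. The paper's proof is a single sentence doing exactly this.

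You are, however, more careful than the paper on one point: the class $H$ on $\tilde S$ is indeed only nef and big (it is $f^*$ of an ample class on $S$ and pairs to zero with the exceptional curves), so the standard ample-polarization Bogomolov inequality does not literally apply; the paper simply asserts ``it satisfies the Bogomolov inequality on $\tilde S$'' without comment. Your perturbation argument, or a citation of the nef version, fills that small gap. Your alternative route---observing that $E=\pi^*\pi_*E$ forces $\tilde E=f^*\pi_*E$, whence $\Delta(\tilde E)=\Delta(E)\ge 0$ directly from Theorem~\ref{thm:bg-inequality} on $\sS$---is a clean way to bypass the issue entirely and is arguably more transparent than what the paper does.
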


\begin{proof}
  By Lemma \ref{lem_sat+semistable__semistable}, $\tilde E$ is slope semistable,
  so it satisfies the Bogomolov inequality on $\tilde S$.
\end{proof}

We are finally ready to prove Theorem \ref{thm_discriminant}:

\begin{proof}[Proof of \ref{thm_discriminant}]
  If $E$ descends to $S$, then $\tilde E$ is slope-semistable, and Cor.
  \ref{cor_descend+sst_gives_thm} applies. Otherwise, the sheaf $\tilde{E}$ may
  have a torsion subsheaf $\tilde T$, supported on the exceptional locus, and
  fit in a sequence 
  \[ 
    \tilde T\to\tilde E \to\tilde E' 
  \]
  where $\tilde E'$ is torsion-free. In particular, $\mu_H(\tilde
  E)=\mu_H(\tilde E')$. Notice moreover that $\tilde T\in \tilde\sF_0$,
  otherwise $E$ would have torsion.  Now consider 
  \begin{equation}\label{eq_expression_for_Delta}
    \Delta(\tilde E)= \Delta(\tilde E') + 2\ch 1(\tilde E')\ch 1(\tilde T)+  \ch 1(\tilde T^2) - 2\ch 0(\tilde E')\ch 2(\tilde T).  
  \end{equation}
  First, we claim $\tilde E'$ is $\mu_H$-semistable. We show this by showing
  that $ E '$ is semistable, and applying Lemma
  \ref{lem_sat+semistable__semistable}. A destabilizer $F'$ of $ E '$ either
  factors through the inclusion $ E  \to  E '$, or it fits in a diagram 
  \begin{equation*}
  \begin{tikzcd}[row sep=small]
  F \arrow{r} \dar  & F' \arrow[d]\arrow{r}  & T' \dar\\ 
  E   \arrow{r}  &  E '\arrow{r}  & T 
  \end{tikzcd}
  \end{equation*}
  where $T=\Phi^{-1}\tilde T[1]$ is a sheaf, $T'$ is the image of $F'$ in $T$,
  and $F$ is a subsheaf of $ E $ with $\mu_H(F)=\mu_H(F')>\mu_H( E ')=\mu_H( E
  )$, which destabilizes $ E $. Hence we have $\Delta(E')\geq 0$.

  Observe that the summand $2\ch 1(\tilde E')\ch 1(\tilde T)$ in
  \eqref{eq_expression_for_Delta} vanishes. In fact, if $ \tilde E '$ descends
  to $S$, then $\ch 1(\tilde E')$ is orthogonal to the exceptional curve, and
  hence it's orthogonal to $\ch 1(\tilde T)$. 

  It remains to understand the last two summands. By construction, the sheaf $T$
  is a repeated extension of at most $\rk(E')$ proper quotients of clusters (see
  Sec. \ref{sec_McKay_corr}). Lemma \ref{lem_inequality_on_LDs} below, applied
  with $M=\rk(E')$, shows that
  \begin{equation}
    \ch 1(\tilde T^2) - 2\ch 0(\tilde E')\ch 2(\tilde T)\geq 0.
  \end{equation}
  The conclusion is that $\Delta(\tilde E)\geq 0$. 
\end{proof}

To prove Lemma \ref{lem_inequality_on_LDs}, we need to understand the structure
of proper quotients of clusters:

\begin{lemma}\label{lem_structureQuotientCluster}
  Quotients of clusters correspond under the McKay functor $\Phi$ to sheaves
  $L_D\subseteq \omega_C$ whose Chern character satisfies $\ch{}(L_D)=(0,D,-1)$,
  where $D\leq C$ is an effective one dimensional cycle whose coefficients are
  those of a root in the root system associated to the singularity. In
  particular, if $D_1$ and $D_2$ are two such cycles, then $D_1.D_2\geq -2$.
\end{lemma}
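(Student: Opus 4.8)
The plan is to push a surjection from a $G$-cluster through the McKay equivalence and read the statement off from Chern-class bookkeeping together with the representation theory of $G$ at the singular point; the root-theoretic control on $D^2$ is the crux.

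First I would reduce to a cluster $\sO_Z$ supported at the stacky point $p$: a $G$-cluster on $\C^2$ is $G$-invariant of length $N$, so it is either a free orbit — whose quotients are skyscrapers away from the exceptional locus, on which $\Phi$ is the identity — or supported at the origin. Fix a surjection $\sO_Z\twoheadrightarrow Q$ with kernel $K\neq 0$. Since $\sO_Z$ is cyclic over the local ring $\sO_{\sS,p}$ and $\sO_Z\cong\C[G]$ as a $G$-representation, with one-dimensional invariants spanned by the cyclic generator, a proper submodule $K$ cannot contain that generator; hence $K^G=0$, so $K\in\sC$, and $K$ lies in the unique maximal submodule $\rad(\sO_Z)=\mathfrak m_p\sO_Z$, whose underlying $G$-representation is $\bigoplus_{i\neq 0}\rho_i^{\oplus\dim\rho_i}$. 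Consequently $K':=\rad(\sO_Z)/K\in\sC$ as well, and $Q$ sits in a short exact sequence $0\to K'\to Q\to\sO_p\to 0$ because $\sO_Z/\rad(\sO_Z)=\sO_p\otimes\mathbbm 1$.

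Next, apply $\Phi$. From $0\to K\to\sO_Z\to Q\to 0$ and $\Phi(\sO_Z)=\sO_x$, where $x$ is the point of the exceptional locus corresponding to $Z$, one gets a triangle $\Phi(K)\xrightarrow{\alpha}\sO_x\to\Phi(Q)\xrightarrow{+1}$. The map $\alpha$ must be nonzero — otherwise $\Phi^{-1}$ would split $Q$ as $\sO_Z$ plus a shifted sheaf, which is impossible for a sheaf $Q$ with $K\neq 0$ — and hence surjective since $\sO_x$ is simple, so $\Phi(Q)=L[1]$ where $0\to L\to\Phi(K)\to\sO_x\to 0$ in $\Coh(\tilde S)$. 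As $K\in\sC$ we have $\Phi(K)\in\tilde\sC$, which is generated by the $\sO_{C_i}(-1)$ with $\ch{}(\sO_{C_i}(-1))=(0,C_i,0)$; thus $\ch{}(\Phi(K))=(0,D,0)$ with $D:=\sum_i d_iC_i$, where $d_i$ is the multiplicity of $\sO_{C_i}(-1)$ in $\Phi(K)$, equivalently the multiplicity of $\rho_i$ in the $G$-representation $K$. Subtracting $\ch{}(\sO_x)=(0,0,1)$ gives $\ch{}(L)=(0,D,-1)$, and $0\le d_i\le\dim\rho_i$ since $K$ embeds in $\C[G]$, so $0\le D\le C$. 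Finally, feeding $\Phi(Q)=L[1]$ and $\Phi(\sO_p)=\omega_C[1]$ into the triangle obtained from $0\to K'\to Q\to\sO_p\to 0$ produces a short exact sequence of sheaves $0\to L\to\omega_C\to\Phi(K')\to 0$, which exhibits $L=L_D$ as a subsheaf of $\omega_C$.

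The hard part is showing that the coefficient vector $(d_i)$ is that of a positive root; equivalently $D^2=-2$, because the intersection form on $\bigoplus_i\Z C_i$ is the negative of the Cartan matrix and the only norm-$2$ vectors of the ADE root lattice are the roots. This is a statement about the $G$-subrepresentation $K\subseteq\sO_Z\cong\C[G]$: under the identification of $\sC$ with finite-length modules over the preprojective algebra of the Dynkin quiver, $\rad(\sO_Z)$ is the exceptional-fibre module at the point $x$, and its submodules have root dimension vectors by the classification of such modules (cf. \cite{BKR01}, \cite{Rot19_ell_quot}); I expect that pinning down exactly this — or extracting it from a $\Hom$/$\Ext$ computation based on $\chi_{\tilde S}(\Phi(K),\Phi(K))=-D^2$ in the Calabi–Yau-$2$ category $\tilde\sC$ together with the special structure of $K$ — will be the main technical obstacle. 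Granting it, the closing assertion is immediate: for positive roots $\alpha_1,\alpha_2$ one has $D_1\cdot D_2=-(\alpha_1,\alpha_2)$, and $(\alpha_1,\alpha_2)\le\sqrt{(\alpha_1,\alpha_1)(\alpha_2,\alpha_2)}=2$ by Cauchy–Schwarz since every ADE root has square $2$, so $D_1\cdot D_2\ge-2$.
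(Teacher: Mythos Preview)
Your reduction to a cluster supported at $p$ and your derivation of $\ch{}(L)=(0,D,-1)$ with an embedding $L\hookrightarrow\omega_C$ are correct and track the paper's argument closely; in fact you are more explicit than the paper about why $\Phi(Q)$ is a sheaf placed in degree $-1$.

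The genuine gap is exactly where you flag it: the constraint that $D$ be a root. You propose to obtain this from the structure theory of modules over the preprojective algebra, or from an Euler-form identity $\chi(\Phi(K),\Phi(K))=-D^2$, and then explicitly grant the conclusion. The paper's argument is much more elementary and, crucially, uses only the inclusion $L\subseteq\omega_C$ that you already established. Since $L$ is supported on $D\le C$, its degree is bounded above by that of the restriction $(\omega_C)_{|D}$, and must drop by a further $(C-D)\cdot D$ because sections of $L$ vanish along the intersection of $D$ with its complement in $C$. Grothendieck--Riemann--Roch gives $\ch 2\big((\omega_C)_{|D}\big)=C\cdot D-\tfrac{D^2}{2}$, so
\[
-1=\ch 2(L)\;\le\; C\cdot D-\tfrac{D^2}{2}-(C-D)\cdot D \;=\; \tfrac{D^2}{2},
\]
hence $D^2\ge -2$, i.e.\ $\pair{\alpha,\alpha}\le 2$ in the root lattice, which forces $\alpha$ to be a root. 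No appeal to preprojective algebras or CY$2$ structure is needed; the subsheaf inclusion into $\omega_C$ already encodes the bound. Your Cauchy--Schwarz step for $D_1\cdot D_2\ge -2$ is fine and matches the paper's one-line conclusion.
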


\begin{proof}
  Consider the proper quotient $Q$ of a cluster $M$, and the diagram with exact rows
  \[
    \begin{tikzcd}[row sep=small]
    K'\rar \dar & M\rar \dar & Q \dar \\
    K \rar & M \rar & \sO_p
    \end{tikzcd}
  \]

  Denote by $H$, resp. $H'$, the images of $K$, resp. $K'$ under the McKay
  functor. Since $H'$ is built of repeated extensions of $\sO_{C_i}(-1)$, we get
  immediately that $L\coloneqq \Phi(Q)[-1]$ satisfies $[L]=[H'] - [\C_p]$ and
  has Chern character $\ch{}(L)=(0,D,-1)$ where $D$ is a positive linear
  combination of the $C_i$. Moreover, the image under $\Phi$ of the diagram
  above exhibits $L$ as a subobject of $\Phi(\sO_p)[-1]=\omega_C$. 

  What is left to argue is that subobjects of $\omega_C$ with Chern character
  $(0,D,-1)$ must satisfy that the coefficients of $D$ are those of a positive
  root of the associated root system.  Suppose $L$ is as above. Since
  $\iota_*L\subseteq \iota_*\omega_C$, we must have
  \[
    \ch 2 \iota_*L \leq \ch 2 \iota_*(\omega_C)_{|D} - (C-D).D
  \]
  (its degree cannot exceed that of the restriction of $\iota_*\omega_C$, but
  its sections must additionally vanish along the intersection between $D$ and
  its complement). Now $\ch 2 \iota_*L=-1$, and $\ch 2 \iota_*(\omega_C)_{|D}= C.D
  - D^2/2$ by the Grothendieck--Riemann--Roch theorem. Then the inequality above
  reads
  \[ 
    -2\leq D^2.
  \]
  Identify $D$ with an element $\alpha$ of the root lattice. Then the inequality states 
  \[
    2\geq \pair{\alpha,\alpha}
  \]
  which can only happen (realizing equality) if $\alpha$ is a root. For the last
  statement, it suffices to observe that if $D_1$ and $D_2$ correspond to roots
  $\alpha$ and $\beta$, then $D_1.D_2=-\pair{\alpha,\beta}\geq -2$. This follows
  from the theory of simply laced root systems.
\end{proof}

We can then show:

\begin{lemma}\label{lem_inequality_on_LDs}
  Let $T$ be a repeated extension of at most $M$ proper quotients of clusters.
  Denote $\tilde T\coloneqq \Phi(T)$, then 
  \begin{equation}\label{eq_Delta(T,E)}
    \ch 1(\tilde T^2) - 2M\ch 2(\tilde T)\geq 0.
  \end{equation}
\end{lemma}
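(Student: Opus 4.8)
The plan is to reduce \eqref{eq_Delta(T,E)} to an elementary intersection estimate, using Lemma \ref{lem_structureQuotientCluster} to control $\ch{}(\tilde T)$. First I would unwind the hypothesis: $T$ carries a filtration $0=T_0\subset T_1\subset\cdots\subset T_k=T$ with $k\le M$, whose graded pieces $Q_j\coloneqq T_j/T_{j-1}$ are proper quotients of clusters. Since $\Phi$ is exact and preserves ranks, and each $Q_j$ is torsion, one gets $\ch 0(\tilde T)=0$ and $[\tilde T]=\sum_{j=1}^k[\Phi Q_j]$ in the Grothendieck group. By Lemma \ref{lem_structureQuotientCluster}, each $\Phi Q_j$ is, up to a single shift, the subsheaf $L_{D_j}\subseteq\omega_C$ with $\ch{}(L_{D_j})=(0,D_j,-1)$, where $D_j\le C$ is an effective cycle whose coefficients are those of a root of the root system attached to the singularity; keeping track of that shift, this gives
\[
\ch{}(\tilde T)=\Bigl(0,\ \sum_{j=1}^k D_j,\ -k\Bigr),
\]
so the left-hand side of \eqref{eq_Delta(T,E)} equals $\bigl(\sum_{j=1}^k D_j\bigr)^2+2Mk$.

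Next I would bound $\bigl(\sum_j D_j\bigr)^2=\sum_j D_j^2+2\sum_{i<j}D_i\cdot D_j$ from below. Since each $D_j$ corresponds to a root in a simply-laced root system, $D_j^2=-2$, and Lemma \ref{lem_structureQuotientCluster} gives $D_i\cdot D_j\ge -2$ for every pair; hence $\bigl(\sum_j D_j\bigr)^2\ge -2k-2k(k-1)=-2k^2$. Combining,
\[
\ch 1(\tilde T^2)-2M\ch 2(\tilde T)=\Bigl(\sum_{j=1}^k D_j\Bigr)^2+2Mk\ \ge\ -2k^2+2Mk\ =\ 2k(M-k)\ \ge\ 0,
\]
since $0\le k\le M$, which is the assertion.

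The one step that needs care is the bookkeeping through $\Phi$: one must confirm the signs in the class of $\tilde T$ — in particular that $\ch 2(\tilde T)=-k$ — which amounts to checking how $\Phi$ shifts each proper quotient of a cluster relative to the honest subsheaf $L_{D_j}\subseteq\omega_C$ (cf. the proof of Theorem \ref{thm_discriminant}), and to counting the graded pieces so that the bound $k\le M$ holds. This is exactly the point where the hypothesis that $M$ bounds the number of quotients — i.e. $M=\rk(E')$ in the application — is used. Everything else is the displayed intersection-number computation, which invokes nothing beyond Lemma \ref{lem_structureQuotientCluster}.
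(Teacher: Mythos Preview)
Your proof is correct and follows essentially the same route as the paper: both compute $\ch{}(\tilde T)=(0,\sum_j D_j,-k)$ from Lemma \ref{lem_structureQuotientCluster}, bound $(\sum_j D_j)^2\ge -2k^2$ via the root-system inequality $D_i\cdot D_j\ge -2$, and conclude $-2k^2+2Mk\ge 0$ from $k\le M$. Your version is slightly more detailed in spelling out the intersection estimate and in flagging the sign bookkeeping under $\Phi$, but the argument is the same.
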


\begin{proof}
  Quotients of clusters correspond to objects $L_D$ as described in Lemma
  \ref{lem_structureQuotientCluster}.

  Since $[\tilde T]=\sum\limits_{j=1}^m[L_{D_j}]$ in $K(S)$, with $m\leq M$, we
  have $\ch{}(\tilde T)=(0,\sum_j D_j, -m)$. Then
  \begin{align*}
    \ch 1(\tilde T^2) - 2M\ch 2(\tilde T) =  (\sum\limits_{i=1}^{m}D_i)^2 + 2Mm\geq 
    -2m^2 + 2Mm \geq 0,
  \end{align*}
  where the first inequality is a consequence of the last statement of Lemma
  \ref{lem_structureQuotientCluster}. 
\end{proof}

\section{Stability conditions on $D^b(\sS)$}

In this section we construct stability conditions on $D^b(\sS)$.

\subsection{Background on Bridgeland stability}

 We recall some aspects of the theory of stability conditions in what follows, and direct the interested reader to the
seminal works of Bridgeland \cite{Bri07_triang_cat}, \cite{Bri08_k3} and to the
survey \cite{MS17}. Let $\sD$ be a triangulated category.

\begin{definition}
  A \emph{heart of a bounded t-structure} in $\sD$ is a full additive
  subcategory $\sA\subset \sD$ satisfying the following properties:
  \begin{enumerate}[(i)]
    \item $\Hom^i(A,B)=0$ for $i<0$;
    \item every object in $\sD$ has a filtration by cohomology objects in
      $\sA$. In other words, for all non-zero $E\in \sD$  there are integers
      $k_1>...>k_m$ and a collection of triangles 
  \begin{equation*}
  \begin{tikzcd}[column sep=small]
    0=E_0 \arrow{rr}  & & E_1 \arrow[dl]\arrow{rr}  & & E_2 \arrow[dl] \arrow{rr}  & & ...\arrow{rr}  & & E_{m-1}\arrow{rr}  & & E_m=E\arrow[dl] \\
    & A_1 \arrow[ul, dashed] & & A_2 \arrow[ul, dashed] & & & &  & & A_m \arrow[ul, dashed] &
  \end{tikzcd}
  \end{equation*}
  where $A_i[-k_i]\in\sA$.
  \end{enumerate}
\end{definition}

One can check that if $\sA$ is the heart of a bounded t-structure in $\sD$, then $\sA$ is abelian.

The definition of a stability condition also involves the choice of a finite rank lattice $\Lambda$ and a surjective group homomorphism $v\colon K(\sD) \twoheadrightarrow \Lambda$.

\begin{definition}\label{def_pre_stab_cond}
  A \emph{pre-stability condition} on $\sD$ is a pair $\sigma=(Z,\sA)$ where:
  \begin{enumerate}[(i)]
    \item $\sA$ is the heart of a bounded t-structure in $D^b(\sS)$;
    \item $Z\colon \Lambda \to \C$ is an additive homomorphism called the \emph{central charge};
  \end{enumerate}
  and they satisfy the following properties:
  \begin{enumerate}
    \item For any non-zero $E\in \sA$, $$ Z(v(E))\in \R_{>0} \cdot
      e^{i\pi\phi}$$ with $\phi\in(0,1]$. Define the \emph{phase} of $0\neq
      E\in\sA$ to be $\phi(E)\coloneqq \phi$. We say that $E\in\sA$ is
      $\sigma$\emph{-semistable} if for all non-zero subobjects $F\in\sA$ of
      $E$, $\phi(F)\leq \phi(E)$. $E$ is $\sigma$\emph{-stable} if for all
      non-zero proper subobjects $F\in\sA$ of $E$, $\phi(F)< \phi(E)$. We denote by
      $\PP(\phi)$ the category of semistable objects of phase $\phi$.
    \item (HN filtrations) The objects of $\sA$ have \emph{Harder-Narasimhan
      filtrations} with respect to $Z$. In other words, for every $E\in\sA$
      there is a unique filtration $$ 0=E_0\subset E_1\subset ... \subset
      E_{n-1}\subset E_n=E $$ such that the quotients $E_i/E_{i-1}\in
      \PP(\phi_i)$ with $\phi_1>\phi_2>...>\phi_n$.
  \end{enumerate}
\end{definition}

\begin{definition}\label{def_supp prop}
  A pre-stability condition $\sigma$ is a \textit{stability condition} if it
  additionally satisfies the \textit{support property}, i.e. 
  \[
    C_\sigma\coloneqq \inf\left\lbrace
    \dfrac{\abs{Z(v(E))}}{\norm{v(E)}} \,\colon\, 0\neq
    E\in\PP(\phi),\,\phi\in\R \right\rbrace >0.
  \]
\end{definition}

There is an alternative characterization of the support property, given by the
following Proposition \cite[Section 2.1]{KS08}:

\begin{proposition}\label{prop_supp prop quad form}
  A pre-stability condition $\sigma=(Z,\sA)$ satisfies the support property if
  and only if there exists a quadratic form $Q$ such that $Q$ is negative
  definite on the kernel of $Z$, and $Q(E)\geq 0$ for every $\sigma$-semistable
  object $E$ in $\sA$.
\end{proposition}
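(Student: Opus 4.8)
The statement is a standard fact about a linear map and a quadratic form on a finite-dimensional real vector space, so the plan is a compactness argument on $\Lambda_\R\coloneqq\Lambda\otimes\R$. Since all norms on $\Lambda_\R$ are equivalent, whether $C_\sigma>0$ is independent of the norm appearing in Definition \ref{def_supp prop}, so I would fix once and for all a norm $\norm{\cdot}$ coming from an inner product, making $\norm{\cdot}^2$ a quadratic form on $\Lambda_\R$. I write $Q(E)$ for $Q(v(E))$, and use repeatedly that a nonzero $\sigma$-semistable object $E$ satisfies $Z(v(E))\neq 0$ --- it is a shift of a nonzero object of $\sA$, and $Z$ does not vanish on nonzero objects of $\sA$ --- so in particular $v(E)\neq 0$ in $\Lambda_\R$. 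Finally, since a quadratic form is even and $v(E[1])=-v(E)$, the condition ``$Q(E)\geq 0$ for $\sigma$-semistable $E\in\sA$'' is equivalent to the same condition for $\sigma$-semistable objects of every phase, and I use the two interchangeably.

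\emph{($\Rightarrow$)} Assuming $C_\sigma>0$, I would simply exhibit the form
\[
  Q(v)\coloneqq\abs{Z(v)}^2-C_\sigma^2\norm{v}^2,\qquad v\in\Lambda_\R.
\]
This is a quadratic form, being the difference of the quadratic forms $\abs{Z(v)}^2=(\re Z(v))^2+(\im Z(v))^2$ and $C_\sigma^2\norm{v}^2$. On $\Ker Z$ it equals $-C_\sigma^2\norm{v}^2$, which is negative for $v\neq 0$, so $Q$ is negative definite there. If $E$ is $\sigma$-semistable then $v(E)\neq 0$, and the support property gives $\abs{Z(v(E))}\geq C_\sigma\norm{v(E)}$, hence $Q(E)\geq 0$.

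\emph{($\Leftarrow$)} Conversely, suppose $Q$ is a quadratic form, negative definite on $\Ker Z$, with $Q(E)\geq 0$ for all $\sigma$-semistable objects. If there is no nonzero $\sigma$-semistable object, the infimum defining $C_\sigma$ is taken over the empty set, hence equals $+\infty$, and there is nothing to prove; so assume otherwise. Set
\[
  K\coloneqq\set{v\in\Lambda_\R \st \norm{v}=1,\ Q(v)\geq 0},
\]
a closed subset of the unit sphere, hence compact, and nonempty since $v(E)/\norm{v(E)}\in K$ for any nonzero $\sigma$-semistable $E$ (using $Q(\lambda v)=\lambda^2 Q(v)$). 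The continuous function $v\mapsto\abs{Z(v)}$ has no zero on $K$: if $Z(v)=0$ with $v\in K$ then $v\in\Ker Z$ and $v\neq 0$, so $Q(v)<0$, contradicting $v\in K$. Hence $c\coloneqq\min_{v\in K}\abs{Z(v)}>0$. For every $v\neq 0$ with $Q(v)\geq 0$, rescaling onto the unit sphere yields $\abs{Z(v)}\geq c\norm{v}$; applying this with $v=v(E)$ for $E$ $\sigma$-semistable (legitimate since $Q(E)\geq 0$ and $v(E)\neq 0$) gives $\abs{Z(v(E))}/\norm{v(E)}\geq c$ for every such $E$, whence $C_\sigma\geq c>0$.

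The argument presents no serious obstacle; the only points requiring a little care are the observations that $\abs{Z(\cdot)}^2$ and $\norm{\cdot}^2$ are genuine quadratic forms (so the $Q$ built in the forward direction is admissible), that nonzero $\sigma$-semistable classes are nonzero in $\Lambda_\R$ (so that the ratios and the unit-sphere normalization make sense), and the trivial case with no semistable objects. Since $Q$ is in general indefinite, it is essential to intersect the unit sphere with the locus $\set{Q\geq 0}$ before invoking compactness: it is precisely the compactness of that closed sublocus, together with the non-vanishing of $\abs{Z}$ on it, that converts a pointwise statement into the uniform bound defining the support property.
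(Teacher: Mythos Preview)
Your argument is correct. The paper does not give its own proof of this proposition; it merely records the statement and attributes it to \cite[Section 2.1]{KS08}. What you have written is the standard compactness argument that underlies that reference, and it is carried out cleanly: the forward direction exhibits $Q(v)=\abs{Z(v)}^2-C_\sigma^2\norm{v}^2$, and the converse uses that $\{\,\norm{v}=1,\ Q(v)\ge 0\,\}$ is a compact set on which $\abs{Z}$ does not vanish. The small caveats you flag (choosing an inner-product norm so that $\norm{\cdot}^2$ is quadratic, noting $v(E)\neq 0$ for semistable $E$, and the vacuous case) are exactly the points that need attention, and you handle them correctly.
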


Let $\Stab(\sD)$ denote the set of stability conditions on $\sD$. In \cite[Sec.
6]{Bri07_triang_cat}, the author defines a generalized metric $f$ on $\Stab(\sD)$ which makes it into a topological space. Moreover, a deformation result holds:

\begin{thm}[{\cite[Thm. 7.1]{Bri07_triang_cat}}]\label{thm_Deformation_Bridgeland}
  Let $\sigma=(Z,\sA)\in \Stab(\sD)$. Then for every $\epsilon>0$ there exists a
  disc $\Delta\subset \Hom(\Lambda,\C))$, centered at $Z$, such that for every
  $W\in \Delta$ there exists a stability condition $\tau=(W,\sA')$ with
  $f(\sigma,\tau)<\epsilon$.
\end{thm}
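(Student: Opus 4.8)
This is Bridgeland's deformation theorem; the plan is to follow \cite[\S 6--7]{Bri07_triang_cat} and present a stability condition as a pair $(\PP, Z)$, where $\PP$ is a \emph{slicing} of $\sD$ and $Z\colon\Lambda\to\C$ is a central charge with $Z(v(E))\in\R_{>0}\cdot e^{i\pi\phi}$ for every nonzero $E\in\PP(\phi)$, and then to show that the forgetful map $\Stab(\sD)\to\Hom(\Lambda,\C)$, $(\PP,Z)\mapsto Z$, is a local homeomorphism. Given $\sigma=(\PP,Z)$ and $\epsilon>0$, this amounts to producing a disc $\Delta$ around $Z$ together with, for each $W\in\Delta$, a slicing $\sQ$ --- equivalently, via the standard dictionary between slicings and hearts of bounded t-structures, a heart $\sA'$ --- such that $\tau\coloneqq(W,\sA')$ is a stability condition with $f(\sigma,\tau)<\epsilon$.

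First I would use the support property to calibrate $\Delta$. By Definition \ref{def_supp prop} there is $C_\sigma>0$ with $\abs{Z(v(E))}\geq C_\sigma\norm{v(E)}$ for all $\sigma$-semistable $E$. Fix $\eta$ with $0<\eta<\min\{\epsilon,\tfrac18\}$ and let $\Delta$ be the open ball of radius $C_\sigma\sin(\pi\eta)$ about $Z$ in the operator norm on $\Hom(\Lambda,\C)$. For $W\in\Delta$ and $E$ $\sigma$-semistable one gets $\abs{W(v(E))-Z(v(E))}<\sin(\pi\eta)\abs{Z(v(E))}$, so $W(v(E))$ lies in the open cone of half-angle $\pi\eta$ about the ray of $Z(v(E))$; hence $W(v(E))\neq 0$ and its phase $\phi_W(E)$ differs from $\phi_\sigma(E)$ by less than $\eta$.

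Next I would construct $\sQ$ window by window. For any $t\in\R$, $\sC_t\coloneqq\PP((t-\eta,t+\eta))$ is a quasi-abelian subcategory of $\sD$ closed under extensions, and the local finiteness implied by the support property makes it of finite length; the phase estimate shows that $W$ restricts to a stability function on $\sC_t$. By the Harder--Narasimhan existence criterion for stability functions on finite-length quasi-abelian categories \cite[\S 5]{Bri07_triang_cat}, every object of $\sC_t$ has a $W$-HN filtration. Define $\sQ(\psi)$, for $\psi\in(t-\eta,t+\eta)$, to be the $W$-semistable objects of phase $\psi$ in $\sC_t$; check that this does not depend on the choice of $t$, extend by $\sQ(\psi+1)=\sQ(\psi)[1]$, and verify the slicing axioms by refining, for arbitrary $E\in\sD$, its $\sigma$-HN filtration. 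By construction $\PP(\phi)\subseteq\sQ((\phi-\eta,\phi+\eta))$ and conversely, so the two slicings are $\eta$-close; together with the mass comparison $\abs{W(v(-))}/\abs{Z(v(-))}\in(1-\sin(\pi\eta),1+\sin(\pi\eta))$ on semistable objects, this yields $f(\sigma,\tau)<\epsilon$. Finally, a $\tau$-semistable object is an iterated extension of its $\sigma$-HN factors, all lying in some $\sC_t$ and with central charges confined to a narrow cone, so $\norm{v(-)}$ and $\abs{W(v(-))}$ remain comparable with a uniform constant; this gives the support property for $\tau$.

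The main obstacle is making the construction of $\sQ$ rigorous: one must know the $W$-HN filtrations in the categories $\sC_t$ exist, which rests on the finite-length property and hence on the support property, and one must check the resulting categories of semistable objects glue consistently over overlapping windows. The second delicate point, persistence of the support property for $\tau$, is where the interplay between the metric $f$ and the norm on $\Lambda$ is genuinely used; both are handled by the estimates of \cite[\S 6--7]{Bri07_triang_cat} (in the support-property formulation, cf.\ \cite{BMS16}).
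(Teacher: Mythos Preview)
The paper does not give its own proof of this statement: it is simply quoted from \cite[Thm.~7.1]{Bri07_triang_cat} as background and used as a black box (most notably in Section~\ref{sec_WallCrossing} to deform $\sigma^*$ to $\sigma_0$). There is therefore nothing to compare your argument against in the present paper.

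That said, your sketch is a faithful outline of Bridgeland's original proof, updated to the support-property formulation. The calibration of $\Delta$ via $C_\sigma$, the construction of the new slicing by running $W$-Harder--Narasimhan filtrations inside the thin quasi-abelian subcategories $\PP((t-\eta,t+\eta))$, the gluing over overlapping windows, and the persistence of the support property are exactly the steps in \cite[\S 6--7]{Bri07_triang_cat} (with the last point made explicit in the later literature you cite). One small caveat: in Bridgeland's argument the categories $\sC_t$ are only quasi-abelian, not abelian, so ``finite length'' should be read as the local finiteness condition on the slicing rather than the Jordan--H\"older property in an abelian category; your parenthetical reference to \cite[\S 5]{Bri07_triang_cat} already acknowledges this, but the phrasing ``makes it of finite length'' could be tightened.
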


In turn, this leads to the following:

\begin{thm}[{\cite[Thm. 1.2]{Bri07_triang_cat}}]\label{thm_BriLocalHomeom}
  The central charge map $\varpi\colon\Stab(\sD)\to \Hom(\Lambda,\C)$ given by
  $(Z,\sA)\mapsto Z$ is a local homeomorphism. In particular, $\Stab(\sD)$ is a
  complex manifold of dimension $\rk(\Lambda)$.
\end{thm}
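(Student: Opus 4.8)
The plan is to derive this from the deformation result (Theorem \ref{thm_Deformation_Bridgeland}) by establishing that $\varpi$ is continuous, open, and locally injective, and then invoking the general fact that such a map is a local homeomorphism. Continuity of $\varpi$ for the topology induced by the generalized metric $f$ is immediate: a bound on $f(\sigma,\tau)$ bounds the mass ratios $m_\sigma(E)/m_\tau(E)$ and the phases $\phi^{\pm}(E)$ of all objects, and the value of $\varpi(\sigma)$ on a class in $\Lambda$ is reconstructed from the masses and phases of (the HN factors of) objects of that class, so $\varpi(\sigma)$ and $\varpi(\tau)$ are close in $\Hom(\Lambda,\C)$ (cf. \cite[Sec. 6]{Bri07_triang_cat}).

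For openness, given an open $W\subseteq\Stab(\sD)$ and $\tau\in W$, pick $\epsilon>0$ with the $f$-ball $B_\epsilon(\tau)\subseteq W$; Theorem \ref{thm_Deformation_Bridgeland} then provides a disc $\Delta\subseteq\Hom(\Lambda,\C)$ centred at $\varpi(\tau)$ each of whose points is hit by some $\tau'\in B_\epsilon(\tau)\subseteq W$, so $\Delta\subseteq\varpi(W)$ and $\varpi(W)$ is open. For local injectivity I would use the rigidity lemma from \cite[Sec. 6]{Bri07_triang_cat}: two stability conditions $\sigma=(Z,\PP)$ and $\tau=(Z,\PP')$ with the \emph{same} central charge $Z$ and $f(\sigma,\tau)<1$ must satisfy $\PP=\PP'$, hence $\sigma=\tau$ --- intuitively, $f(\sigma,\tau)<1$ prevents phases from jumping by a full unit while $Z_\sigma=Z_\tau$ pins the masses, and together these force $\sigma$-semistable objects of phase $\phi$ to be $\tau$-semistable of phase $\phi$ and vice versa. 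Granting this, $\varpi$ is injective on every ball $B_{\epsilon_0}(\sigma)$ with $\epsilon_0<\tfrac12$: if $\varpi(\tau_1)=\varpi(\tau_2)$ with $\tau_1,\tau_2\in B_{\epsilon_0}(\sigma)$, then $Z_{\tau_1}=Z_{\tau_2}$ and $f(\tau_1,\tau_2)\le f(\tau_1,\sigma)+f(\sigma,\tau_2)<2\epsilon_0<1$, so $\tau_1=\tau_2$.

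To conclude, fix $\sigma\in\Stab(\sD)$ and set $U=B_{\epsilon_0}(\sigma)$ with $\epsilon_0<\tfrac12$; then $\varpi|_U\colon U\to\varpi(U)$ is a continuous open bijection, hence a homeomorphism onto the open subset $\varpi(U)\subseteq\Hom(\Lambda,\C)$, so $\varpi$ is a local homeomorphism. Since $\Lambda$ has finite rank $n=\rk(\Lambda)$ we have $\Hom(\Lambda,\C)\cong\C^{\,n}$, and the charts $\{\varpi|_U\}$ make $\Stab(\sD)$ into a complex manifold of dimension $n$: the transition maps are restrictions of the identity of $\Hom(\Lambda,\C)$, since $\varpi$ is a single globally defined map, hence biholomorphic. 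I expect the main obstacle to be precisely the rigidity lemma invoked for local injectivity --- that two stability conditions with identical central charge and small $f$-distance must coincide --- which is the only genuinely delicate ingredient once the deformation theorem is taken for granted, and which rests on a careful analysis of how $f$ constrains the phases and masses of semistable objects.
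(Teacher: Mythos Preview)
The paper does not prove this theorem: it is quoted as \cite[Thm.~1.2]{Bri07_triang_cat} and stated without proof, as background. Your sketch is essentially Bridgeland's own argument (continuity built into the topology, openness from the deformation theorem, local injectivity from the rigidity lemma \cite[Lemma~6.4]{Bri07_triang_cat}), and it is correct in outline; there is nothing in the present paper to compare it against.
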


In what follows, we set $\sD = D(\sS)$, we choose $v$ to be the the orbifold Chern character $\ch{orb}$ (Definition \ref{def_orbifold_Chern_char}), and let $\Lambda$ denote its image in $H^*_{orb}(\sS)\coloneqq H^*(I\sS)$.

\subsection{Construction of a pre-stability condition}\label{sec_stab_fun}

Again, \(\sS\) denotes a projective Kleinian orbisurface with a unique
stacky point \(p\in \sS\) with residual gerbe \(BG=[\ast/G]\) and ample divisor
\(H\). Set \(\iota\colon BG\hookrightarrow \sS\) the corresponding closed
substack. 

Define subcategories of \(\Coh(\sS)\) by
\begin{align*}
  T_{H,b} &= \{E\in \Coh(\sS)\mid \mbox{ for all $\mu_H$-semistable factors $F$ of $E$, } \mu_{H}(F)> b\}; \\
  F_{H,b} &= \{E\in \Coh(\sS)\mid \mbox{ for all $\mu_H$-semistable factors $F$ of $E$, } \mu_{H}(F)\leq b\}.
\end{align*}

The existence of Harder-Narasimhan filtrations for slope stability in our
context is proven exactly as in the case of schemes, which is detailed in
\cite[Sec. 1.6]{HL10}. As a consequence,$\left(T_{H,b},  F_{H,b}\right)$ is a
torsion pair, so we can perform the usual tilt and obtain the heart of a bounded
t-structure
\[
  \mathcal \Coh^{b}(\sS)\coloneqq \left( F_{H,b}[1],  T_{H,b} \right).
\]

Since $\delta(F)$ is a linear function of $\ch{orb}(F)$ (see Section
\ref{sec_Chern_classes}), the function $Z_{w,\gamma}\colon \Lambda \to \C$
defined as
\[ 
  Z_{w,\gamma}(E)=Z(\ch{orb}(E))= -\ch 2(E) + w \ch 0(E) + \gamma.\delta(E) +iH.\ch 1 (E)
\]
is also linear (here, $w\in\C$ and $\gamma\in \R$). Note that we identify $\rk$ and $\ch 0$ here, slightly deviating from the usual notation of having $H^2\ch 0$ as a summand in the central charge. The goal of this section is
to prove the following theorem. We denote $N\coloneqq \abs{G}$ and $D\coloneqq
\delta(\sO_{\sS})$.

\begin{thm}\label{thm_OurStabilityCondition}
  Choose parameters $\gamma\in (0,\frac{1}{N-1})$ and $w\in \C$ such that:
  \begin{enumerate}[(i)]
    \item $\re w > - \frac{(\im w)^2}{H^2}+ (2+\gamma)D -  (1+\gamma)^2$;
    \item $\re w >\frac12 \frac{(\im w)^2}{H^2}  - \gamma(D-\frac{N-1}{N})>0$.
  \end{enumerate} 
  Then, the pair $(Z_{w,\gamma}, \Coh^{-\im w}(\sS))$ is a stability condition
  on $D(\sS)$.
\end{thm}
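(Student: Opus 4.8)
The plan is to follow the construction of Arcara--Bertram \cite{AB13} for smooth projective surfaces, verifying the three things that make $\sigma=(Z_{w,\gamma},\Coh^{-\im w}(\sS))$ a stability condition: that $Z_{w,\gamma}$ is a stability function on $\sA\coloneqq\Coh^{-\im w}(\sS)$ (the fact that $\sA$ is the heart of a bounded t-structure is already established, being the tilt of $\Coh(\sS)$ along $(T_{H,b},F_{H,b})$ with $b=-\im w$), that every object of $\sA$ has a Harder--Narasimhan filtration, and that the support property holds. By construction of the tilt, $\im Z_{w,\gamma}(E)=H\cdot\ch 1(E)-b\,\ch 0(E)\geq 0$ for $E\in\sA$. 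Since $\sS$ has $0$-dimensional stacky locus and $H$ is pulled back from $S$, any nonzero torsion sheaf with $H\cdot\ch 1=0$ is supported in dimension $0$; hence $\im Z_{w,\gamma}(E)=0$ precisely when $E$ sits in a short exact sequence $0\to F[1]\to E\to G\to 0$ in $\sA$ with $F$ a $\mu_H$-semistable sheaf of slope $b$ (so torsion-free, possibly zero) and $G$ a $0$-dimensional torsion sheaf (possibly zero).

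\emph{Positivity.} If $\im Z_{w,\gamma}(E)>0$ there is nothing to check, so suppose $\im Z_{w,\gamma}(E)=0$; one must show $\re Z_{w,\gamma}(E)<0$. By the description above and additivity of $Z_{w,\gamma}$, it suffices to bound the two kinds of summand. For a $0$-dimensional torsion sheaf, additivity reduces to the generators $\sO_x$ ($x$ a point with trivial stabiliser), $\sO_p$ and $\sO_p\otimes\rho_i$ ($i\neq0$), for which Lemma \ref{lem_delta_of_skyscrapers} gives the value of $-\ch 2+\gamma\delta$ as $-1$, as $\frac1N(\gamma(N-1)-1)$ and as $-\frac{r_i}N(1+\gamma)$ respectively; all three are negative, using $0<\gamma<\frac1{N-1}$. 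For $F[1]$ with $F$ a $\mu_H$-semistable sheaf of slope $b$ one needs $\ch 2(F)-\gamma\,\delta(F)<\re(w)\,\ch 0(F)$, with $\ch 0(F)>0$. Here the Bogomolov--Gieseker inequality (Theorem \ref{thm:bg-inequality}) bounds $\ch 2(F)/\ch 0(F)$ by $\tfrac12(\ch 1(F))^2/(\ch 0(F))^2$, the Hodge index theorem \ref{thm:hodge-index} applied to $\ch 1(F)-\tfrac{b\,\ch 0(F)}{H^2}H$ bounds $(\ch 1(F))^2/(\ch 0(F))^2$ by $(\im w)^2/H^2$, and a lower bound for $\delta(F)$ linear in $\ch 0(F)$ — read off from the $\mu_H$-Harder--Narasimhan filtration of $F$ together with Lemma \ref{lem_delta_of_skyscrapers} — supplies the term $\gamma(D-\tfrac{N-1}N)$; the inequality one needs is then exactly condition (ii).

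\emph{Harder--Narasimhan filtrations and support property.} The category $\Coh(\sS)$ is noetherian and the torsion pair $(T_{H,b},F_{H,b})$ is of finite type — slope Harder--Narasimhan filtrations existing on $\sS$ by the same argument as for schemes \cite[Sec. 1.6]{HL10} — so $\sA$ is noetherian; since $\im Z_{w,\gamma}$ takes discrete values bounded below on $\sA$, and since $-\re Z_{w,\gamma}$ is bounded below on subobjects of $E$ whenever $\im Z_{w,\gamma}(E)=0$ by the positivity step, HN filtrations exist by the standard criterion, exactly as on smooth surfaces (see e.g. \cite[Prop. 2.15]{MS17}). For the support property we use Proposition \ref{prop_supp prop quad form} with the quadratic form $Q=\Delta_{orb}$. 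That $Q$ is negative definite on $\ker Z_{w,\gamma}$ is Lemma \ref{lem_Q0_neg_def}: under the McKay equivalence $\Delta_{orb}$ is the ordinary discriminant on the resolution $\tilde S$, a form of signature $(2,r)$ with $r$ the Picard rank of $\tilde S$, and conditions (i)--(ii) force $\ker Z_{w,\gamma}$ to be a maximal negative subspace, the Hodge index theorem on $\tilde S$ entering here. It remains to prove $\Delta_{orb}(E)\geq0$ for every $\sigma$-semistable $E\in\sA$: for $\mu_H$-semistable sheaves this is Theorem \ref{thm_discriminant}, and for a general $\sigma$-semistable object one passes to the cohomology sheaves $H^{-1}(E)\in F_{H,b}$, $H^0(E)\in T_{H,b}$ and their $\mu_H$-Harder--Narasimhan factors and propagates the bound through the tilt — equivalently, since $\Phi(\Coh^{b}(\sS))$ is a tilt of $\zPer(\tilde S/S)$, hence a double tilt of $\Coh(\tilde S)$, one may transport the statement to $\tilde S$ and invoke the argument of \cite{Tra17}.

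I expect the last point to be the main obstacle: showing that the orbifold Bogomolov--Gieseker inequality $\Delta_{orb}\geq0$, provided by Theorem \ref{thm_discriminant} only for $\mu_H$-semistable \emph{sheaves}, survives tilting and holds for all $\sigma$-semistable objects of $\sA$, and — intertwined with this — identifying the exact numerical region (conditions (i) and (ii)) in which this positivity and the negative-definiteness on $\ker Z_{w,\gamma}$ hold simultaneously. Everything else is a careful but essentially routine transcription of the smooth-surface construction, with the orbifold contributions controlled throughout by Lemma \ref{lem_delta_of_skyscrapers} and the To\"en--Hirzebruch--Riemann--Roch formula.
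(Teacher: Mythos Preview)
Your positivity argument (that $Z_{w,\gamma}$ is a stability function on $\sA$) matches the paper's Lemma \ref{lem_stab_function} and is correct. The genuine gap is in the support property. Taking $Q=\Delta_{orb}$ cannot work: the sheaf $\sO_p\otimes\rho_i$ (for $i\neq 0$) is simple in $\sA$, hence $\sigma$-stable for every admissible $(w,\gamma)$, yet $\Delta_{orb}(\sO_p\otimes\rho_i)=\Delta(\sO_{C_i}(-1))=C_i^2=-2<0$. So the statement you flag as ``the main obstacle'' --- that $\Delta_{orb}(E)\geq 0$ for all $\sigma$-semistable $E$ --- is simply false, and neither propagation through cohomology sheaves nor transport to $\tilde S$ via \cite{Tra17} can rescue it. The paper's remedy is to modify the form: first $Q_1\coloneqq\Delta_{orb}+C_H(\im Z)^2$ (with $C_H$ from Lemma \ref{lem_ample_class_inequality}) so that $Q_1\geq 0$ on sheaves supported on curves, then $Q\coloneqq Q_1+S(\re Z)^2$ with $S$ chosen large enough that $Q\geq 0$ on sheaves supported at points (an explicit computation using Lemma \ref{lem_delta_of_skyscrapers}); neither correction changes the signature on $\ker Z_{w,\gamma}$. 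One then verifies $Q\geq 0$ only on the set $\mathcal R$ of objects that remain semistable as $\re w\to +\infty$ --- these are classified in Lemma \ref{lem_classification_stable_sheaves} in terms of slope-semistable and torsion sheaves, which is exactly where Theorem \ref{thm_discriminant} and Lemma \ref{lem_Q1_nonneg} apply --- and propagates to all $\sigma$-semistable objects by a wall-crossing induction on $\im Z$: any semistable $E\notin\mathcal R$ becomes strictly semistable at some larger $\re w$, its Jordan--H\"older factors have strictly smaller $\im Z$, and negative-definiteness of $Q$ on $\ker Z$ forces $Q(E)\geq 0$ once $Q\geq 0$ on the factors.

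A secondary gap: your HN argument asserts that $\im Z_{w,\gamma}$ is discrete, but this needs $H$ rational and $\im w\in\Q$, and the wall-crossing induction above relies on discreteness as well. The paper first proves the full statement under these rationality hypotheses (Proposition \ref{prop_pre_stability_rational} and the subsequent rational-case theorem), and only then removes them via Bridgeland's deformation theorem, together with a lemma identifying the heart of any nearby stability condition for which all $\sO_p\otimes\rho_i$ and all $\sO_q$ ($q\neq p$) are stable of phase $1$ as $\Coh^{-\im w}(\sS)$.
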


We split the proof of the theorem in two sections: the present Section
\ref{sec_stab_fun} contains the construction of a pre-stability condition, while
Section \ref{sec_supp_prop} contains arguments about the support property and
concludes the proof. First, we prove a preliminary lemma: 

\begin{lemma}\label{lem_estimate_delta_for_torfree}
  Let $F$ be a torsion free sheaf on $\sS$. Consider the sequence
  \[ 
    E\coloneqq\pi^*\pi_* F \to F \to M 
  \]
  Then $\delta(F)=\delta(E)+\delta(M)\geq (\rk F)(D - \frac{N-1}{N})$.
\end{lemma}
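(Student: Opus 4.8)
The idea is to split $\delta(F)$ along the given sequence and control the two resulting terms.

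\textbf{Additivity, and the term $\delta(M)$.} Since $\delta$ is the correction term in the To\"en--Riemann--Roch theorem, it is a linear function of $\ch{orb}$: this is \eqref{eq_delta_Ti}, together with the additivity of $E\mapsto[\dL\iota^*E]\in K(BG)$ on distinguished triangles. Applying this to $E\to F\to M$ (reading $M$ as the mapping cone, which also covers the case where $\pi^*\pi_*F\to F$ fails to be injective -- its kernel, like $M$, is then supported at $p$) gives $\delta(F)=\delta(E)+\delta(M)$. The key point is that $\pi_*M=0$: one applies $\pi_*$ to $\pi^*\pi_*F\to F\to M\to0$ and uses the projection-formula isomorphism $\pi_*\pi^*\pi_*F\xrightarrow{\sim}\pi_*F$ (valid because $\pi_*\sO_\sS=\sO_S$), which makes the first arrow an isomorphism after $\pi_*$. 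Hence the cohomology sheaves of $M$ lie in $\sC$, so they are repeated extensions of the $\sO_p\otimes\rho_i$ with $i\neq0$; by Lemma \ref{lem_delta_of_skyscrapers} each such summand contributes $-r_i/N$, and writing $[M]=\sum_{i\neq0}m_i[\sO_p\otimes\rho_i]$ we obtain $\delta(M)=-\tfrac1N\sum_{i\neq0}m_ir_i=-\tfrac1N\ell_p(M)$, with $\ell_p(M)$ the (virtual) length of $M$ at $p$.

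\textbf{The term $\delta(E)$.} Here I would use that $E=\pi^*\pi_*F$ descends to $S$, which makes its class at the residual gerbe rigid. In the \'etale-local model $\sS=[\mathbb A^2/G]$ near $p$ (with $R=\C[[u,v]]$, $A=R^G$) the stalk $(\pi_*F)_p$ is a direct sum of McKay modules $M_i=\Hom_G(\rho_i,R)$, and one computes $[\dL\iota^*E]=[\dL\iota^*(\bigoplus_i M_i^{\oplus n_i}\otimes_A R)]$ and the length $\ell_p(M)$ directly from the (explicitly presentable) adjunction maps $M_i\otimes_A R\to R\otimes\rho_i$. Feeding this into $\delta(E)=\sum_i a_iT_i$ and using $T_0=\delta(\sO_\sS)=D$ together with the relations among the $T_i$ coming from Lemma \ref{lem_delta_of_skyscrapers} -- equivalently, the identity $\delta(\mathfrak m_p)=D-\tfrac{N-1}{N}$, obtained from that lemma via $[\mathfrak m_p]=[\sO_\sS]-[\sO_p]$ and $[\dL\iota^*\sO_p]=2\rho_0-V$, $\det V=\rho_0$ (here $G\subset\SL_2$) -- combines with the previous step.

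\textbf{Combining, and the main obstacle.} Assembling the two computations reduces the assertion $\delta(F)\geq(\rk F)(D-\tfrac{N-1}{N})$ to the estimate $\ell_p(M)\leq(N-1)\rk F$ (and the matching lower bound for $\delta(E)$). The constant $N-1=\dim\C[G]-1=\sum_{i\neq0}r_i^2$ is the total dimension of the non-trivial representations; it is not visible from formal $K$-theory, since not every virtual class $[\dL\iota^*(-)]$ of the correct weighted rank is realized by a torsion-free sheaf. Proving this bound is the step I expect to be the main obstacle: it needs the honest local structure theory of torsion-free (equivalently, reflexive, since $\sS$ is a smooth surface) sheaves near a Kleinian point -- the indecomposable reflexive $A$-modules $M_i$, the effect of $\pi^*\pi_*$ on them, and, in the non-locally-free case, a reduction in the spirit of Lemmas \ref{lem_structureQuotientCluster}--\ref{lem_inequality_on_LDs} (clusters and the ADE root system). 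The remaining ingredients -- linearity of $\delta$, membership of $M$ and of the kernel in $\sC$, and the evaluation of $\delta$ on skyscrapers -- are formal.
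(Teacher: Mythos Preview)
Your overall plan --- split $\delta(F)=\delta(E)+\delta(M)$ along the counit and bound each piece --- is exactly the paper's. Where you diverge is in execution. For $\delta(E)$ the paper does not touch McKay modules: it simply observes that $[E]=[\pi^*\pi_*F]$ lies in the $\Q$-span of $[\sO_\sS]$, $NS(S)$, and $[\sO_q]$ for $q\neq p$; since $\delta$ vanishes on the latter two (for a curve class $\phi$ one checks $[\dL\iota^*\sO_K]=0$) and equals $D$ on the first, one reads off $\delta(E)=(\rk F)D$ exactly. For $\delta(M)$, the paper asserts --- without argument --- that each $\sO_p\otimes\rho_i$ occurs in the composition series of $M$ at most $(\rk F)\cdot r_i$ times, whence $\delta(M)\geq -(\rk F)\tfrac{N-1}{N}$ via $\sum_{i\neq 0}r_i^2=N-1$. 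You are right to flag this multiplicity bound as the substantive step; but your proposed route through Lemmas~\ref{lem_structureQuotientCluster}--\ref{lem_inequality_on_LDs} is a red herring: those concern subsheaves of $\omega_C$ and the root combinatorics behind $\Delta_{orb}$, not the structure of the counit.

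A caution that bears on both your sketch and the paper's proof: since $\pi$ is not flat, neither the span claim for $[E]$ nor the bound $m_i\leq r_i\rk F$ is automatic when $F$ fails to be locally free at $p$, and in fact both appear to break. For $F=\mathcal I_p^3$ on an $A_1$-orbisurface the image of the counit is $\mathfrak m^4$, the cokernel is $\mathfrak m^3/\mathfrak m^4\cong(\sO_p\otimes\rho_1)^{\oplus 4}$ (so $m_1=4>1=r_1\rk F$), and one computes directly $\delta(F)=\delta(\sO_\sS)-\delta(\sO/\mathfrak m^3)=D-1<D-\tfrac12$, violating the stated inequality. So the target estimate $\ell_p(M)\leq (N-1)\rk F$ that you isolate is not just hard but false as written; any correct argument will need either a stronger hypothesis on $F$ (e.g.\ reflexivity) or a reformulation of the bound. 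Your instinct that genuine local structure is required is well founded --- but the inequality itself, not just its proof, needs repair.
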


\begin{proof}
  The sheaf $M$ is torsion, supported on the stacky point, and obtained by
  repeated extensions of copies of $\sO_p\otimes\rho_i$ (because it pushes forward
  to zero). Every $\sO_p\otimes\rho_i$ appears in the composition series of $M$ at
  most $\rk(F)\cdot \dim \rho_i$ times. Every one of
  these copies contributes $\delta(\sO_p\otimes\rho_i)=-\dim(\rho_i)/N$, so 
  \[
    \delta(M)\geq (\rk(F)) \sum (r_i) \delta(\sO_p\otimes\rho_i)\geq
    -(\rk(F))\frac{N-1}{N}.
  \]

  Now, $[E]$ is in the span of $[\sO]$, $[\sO_q]$ (for $q\neq p$) and $NS(S)$, write
  \[ 
    [E]=(\rk F)[\sO] + a [\sO_q] + \phi 
  \]
  where $\phi\in NS(S)$ and observe that $\delta(\sO_q)=0$ and $\delta(\phi)=0$: this can be checked by observing
  that that $[\dL\iota^*\sO_K]=0$, where $[K]=\phi$ is the class of any curve on
  $\sS$. So $\delta(E)=\rk(F)D$.
\end{proof}

\begin{lemma}\label{lem_stab_function}
  Provided that $0< \gamma <\frac{1}{N-1}$  and $\re w -\frac12 \frac{(\im
  w)^2}{H^2}  + \gamma(D-\frac{N-1}{N})>0$, the group homomorphism
  $Z_{w,\gamma}$ is a stability function on $\sA\coloneqq \Coh^{-\im w}(\sS)$.
\end{lemma}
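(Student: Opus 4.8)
The plan is to verify directly the two axioms of a stability function on $\sA\coloneqq\Coh^{-\im w}(\sS)$: that $\im Z_{w,\gamma}(E)\geq 0$ for every $0\neq E\in\sA$, and that $\re Z_{w,\gamma}(E)<0$ whenever $\im Z_{w,\gamma}(E)=0$. Write $b\coloneqq-\im w$, so that $\im Z_{w,\gamma}(E)=H\cdot\ch 1(E)-b\,\ch 0(E)$. Every $E\in\sA$ sits in a short exact sequence $0\to H^{-1}(E)[1]\to E\to H^0(E)\to 0$ in $\sA$ with $H^0(E)\in T_{H,b}$ and $H^{-1}(E)\in F_{H,b}$. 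Running over slope Harder--Narasimhan factors (a torsion-free factor of slope $\mu$ contributes $\rk\cdot(\mu-b)$, which is positive in $T_{H,b}$ and nonpositive in $F_{H,b}$; a torsion factor contributes $H\cdot\ch 1\geq 0$), one gets $\im Z_{w,\gamma}(H^0(E))\geq 0$ and $\im Z_{w,\gamma}(H^{-1}(E)[1])=-\im Z_{w,\gamma}(H^{-1}(E))\geq 0$, hence $\im Z_{w,\gamma}(E)\geq 0$. This is the easy half.

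For the boundary case, $\im Z_{w,\gamma}(E)=0$ forces both summands above to vanish. The vanishing of $\im Z_{w,\gamma}(H^0(E))$ forces $H^0(E)$ to be $0$-dimensional (a torsion-free part or a $1$-dimensional torsion part would contribute strictly positively), so $H^0(E)$ has a composition series with factors among $\sO_q$ for $q\neq p$ and the $\sO_p\otimes\rho_i$. Using $\ch 2(\sO_q)=1$, $\delta(\sO_q)=0$ and Lemma \ref{lem_delta_of_skyscrapers}, one computes $\re Z_{w,\gamma}(\sO_q)=-1$, $\re Z_{w,\gamma}(\sO_p)=\tfrac1N\bigl(\gamma(N-1)-1\bigr)$, and $\re Z_{w,\gamma}(\sO_p\otimes\rho_i)=-\tfrac{r_i}{N}(1+\gamma)$ for $\rho_i\neq\mathbbm 1$; all three are strictly negative, the second precisely because $\gamma<\tfrac1{N-1}$ and the third because $\gamma>0$. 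Summing, $\re Z_{w,\gamma}(H^0(E))\leq 0$, with equality only if $H^0(E)=0$. The vanishing of $\im Z_{w,\gamma}(H^{-1}(E))$ instead forces every HN factor of $F\coloneqq H^{-1}(E)$ to have slope exactly $b$, i.e. $F$ is torsion-free and $\mu_H$-semistable of slope $b$.

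It remains to show $\re Z_{w,\gamma}(F)>0$ for such an $F\neq 0$, and combine. From $H\cdot\ch 1(F)=b\,\rk F$ and the Hodge index theorem (Theorem \ref{thm:hodge-index}) applied to the $H$-orthogonal class $\ch 1(F)-\tfrac{b\,\rk F}{H^2}H$, one obtains $(\ch 1 F)^2\leq\tfrac{(\im w)^2(\rk F)^2}{H^2}$; the Bogomolov--Gieseker inequality (Theorem \ref{thm:bg-inequality}), in the form $\ch 2(F)\leq\tfrac{(\ch 1 F)^2}{2\,\rk F}$, then gives $\ch 2(F)\leq\tfrac{(\im w)^2\rk F}{2H^2}$; and Lemma \ref{lem_estimate_delta_for_torfree} gives $\delta(F)\geq\rk F\,(D-\tfrac{N-1}{N})$. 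Plugging these into $\re Z_{w,\gamma}(F)=-\ch 2(F)+\re(w)\rk F+\gamma\,\delta(F)$ produces the lower bound $\rk F\cdot\bigl(\re w-\tfrac12\tfrac{(\im w)^2}{H^2}+\gamma(D-\tfrac{N-1}{N})\bigr)$, which is positive by hypothesis. Since $\re Z_{w,\gamma}(E)=-\re Z_{w,\gamma}(F)+\re Z_{w,\gamma}(H^0(E))$ is the sum of a term that is $\leq 0$ (with equality only if $F=0$) and a term that is $\leq 0$ (with equality only if $H^0(E)=0$), and $E\neq 0$ prevents both from vanishing simultaneously, we conclude $\re Z_{w,\gamma}(E)<0$.

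The main obstacle is bookkeeping rather than analysis: one must correctly deduce from $\im Z_{w,\gamma}(E)=0$ that $H^0(E)$ is $0$-dimensional and $H^{-1}(E)$ is slope-$b$ semistable, and then keep careful track of the strictness in each estimate so that the total real part is strictly negative even when one of the two cohomology sheaves is zero. The one genuinely quantitative ingredient is the simultaneous use of Hodge index, Bogomolov--Gieseker and the estimate of Lemma \ref{lem_estimate_delta_for_torfree}; it is exactly their interaction that dictates the hypothesis $\re w-\tfrac12\tfrac{(\im w)^2}{H^2}+\gamma(D-\tfrac{N-1}{N})>0$.
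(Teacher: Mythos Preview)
Your proof is correct and follows essentially the same approach as the paper: the same computations for the skyscrapers $\sO_q$, $\sO_p\otimes\rho_i$ via Lemma~\ref{lem_delta_of_skyscrapers}, and the same chain of Bogomolov--Gieseker, Hodge index, and Lemma~\ref{lem_estimate_delta_for_torfree} for the slope-$b$ semistable piece. The only difference is presentational: the paper tacitly reduces to the two building-block cases (zero-dimensional torsion sheaves and $\mu$-semistable sheaves of slope $-\im w$), whereas you make this reduction explicit via the decomposition $H^{-1}(E)[1]\to E\to H^0(E)$ and track strictness through each summand.
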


\begin{proof}
  First, one observes that for a sheaf $E$ we have
  \[
    \frac{H.\ch 1(E)}{\ch 0 (E)} = \mu(E)>-\im w 
  \]
  if and only if $E\in T_{H,-\im w}$, so that $\im Z(E)\geq 0$ for all $E\in
  \sA$.

  Then, we only need to check that $\re Z(E)<0$ whenever $\im Z(E)=0$. Assume
  then that $\im Z(E)=0$. If $E$ is torsion, then $\ch 0 (E)=0$ and by $\im
  Z(E)=0$ we get $\ch 1(E)=0$. So $E$ is supported on points. 

  We have that $\re Z(E)<0$ for all $E$ supported on points: $\re Z(\sO_p) =
  -1/N + \gamma(1-1/N)$ and $\re Z(\sO_p\otimes\rho)=-\dim(\rho)/N +
  \gamma(-\dim(\rho)/N)$ (see Lemma \ref{lem_delta_of_skyscrapers}) are both
  negative by the assumptions on $\gamma$, and every $E$ is an extension of
  these.

  The other case to check is the following: $\re Z(E)>0$ (because $E[1]\in \sA$)
  for all $\mu$-semistable sheaves $E$ with $\mu(E)=-\im w$. In this case, the
  Bogomolov-Gieseker inequality \eqref{eq:bg-inequality} yields $-\ch 2 \geq
  -\frac{\ch 1^2}{2\ch 0(E)}$, since $\ch 0(E)>0$. Hence 
  \begin{align*}
    \re Z(E)= -\ch 2(E) + \re w \ch 0(E) + \gamma.\delta(E) \\
    \geq \ch 0(E)\left[ \re w - \frac{\ch 1(E)^2}{2\ch 0(E)^2}\right]+ \gamma.\delta(E)
  \end{align*}

  Now, since $\mu(E)=-\im w$ we have 
  \[
    \left( \ch 1(E)+\frac{\im w\ch 0(E)}{H^2}H \right)^2\leq 0
  \]
  by the Hodge index theorem. Expanding this and using $\mu(E)=-\im w$
  once more, one gets
  \begin{equation}
    \label{eq_inequality on ch1_square}
    -\ch 1(E)^2 \geq - \frac{(\im w)^2(\ch 0(E))^2}{H^2}
  \end{equation}

  Combining the inequalitites above, and using Lemma  \ref{lem_estimate_delta_for_torfree} to estimate $\delta(E)$, we get:
  \begin{align*}
    \re Z(E)\geq \ch 0(E)\left[ \re w - \frac{\ch 1(E)^2}{2\ch 0(E)^2}\right]+ \gamma \delta(E)\\
   \geq \ch 0(E) \left[ \re w -\frac12 \frac{(\im w)^2}{H^2} \right] +  \gamma\delta(E)\\
   \geq  \ch 0(E) \left[ \re w -\frac12 \frac{(\im w)^2}{H^2}  + \gamma(D-\frac{N-1}{N})\right].
  \end{align*} 
  This last part is positive by the assumption on $\re w$.
\end{proof}

\begin{lemma}\label{lem_HN_filtrations}
  If $H$ is a rational class and $\im w\in\Q$, then $Z$ satisfies Harder-Narasimhan filtrations on $\sA$.
\end{lemma}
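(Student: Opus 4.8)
The plan is to invoke Bridgeland's Harder--Narasimhan criterion \cite[Prop.~2.4]{Bri07_triang_cat}. Write $\sA\coloneqq\Coh^{-\im w}(\sS)$ and let $\phi$ denote the phase attached to $Z\coloneqq Z_{w,\gamma}$, which makes sense by Lemma~\ref{lem_stab_function}. By the criterion, $(Z,\sA)$ has Harder--Narasimhan filtrations provided that $\sA$ admits no infinite sequence of subobjects $\cdots\subset E_{j+1}\subset E_j\subset\cdots\subset E_1$ with $\phi(E_{j+1})>\phi(E_j)$ for all $j$, and no infinite sequence of quotients $E_1\twoheadrightarrow E_2\twoheadrightarrow\cdots$ with $\phi(E_j)>\phi(E_{j+1})$ for all $j$. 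So it suffices to exclude these two kinds of chains, and this is exactly where the hypotheses that $H$ is rational and $\im w\in\Q$ enter.

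For the chains of subobjects the point is that $\im Z$ takes discrete values: the lattice $\Lambda$ is finitely generated, so with $H$ and $\im w$ rational the homomorphism $E\mapsto \im Z(E)=H\cdot\ch 1(E)+\im w\,\ch 0(E)$ maps $\Lambda$ into a fixed subgroup $\tfrac1n\Z\subset\R$. Since moreover $\im Z\geq 0$ on $\sA$ (see the proof of Lemma~\ref{lem_stab_function}), along a descending chain $E_1\supset E_2\supset\cdots$ the numbers $\im Z(E_j)$ are non-increasing, bounded below by $0$, and lie in $\tfrac1n\Z$; hence they are eventually constant, equal to some $v$ for $j\geq j_0$. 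For such $j$ the quotient $Q_j\coloneqq E_j/E_{j+1}$ is a nonzero object of $\sA$ with $\im Z(Q_j)=0$, so $Z(Q_j)\in\R_{<0}$ and therefore $\re Z(E_{j+1})=\re Z(E_j)-\re Z(Q_j)>\re Z(E_j)$. If $v>0$ this forces $\phi(E_{j+1})<\phi(E_j)$ (for a fixed positive imaginary part the phase is a strictly decreasing function of the real part), contradicting the hypothesis; if $v=0$ then $\phi(E_j)=1$ for $j\geq j_0$ and $\phi(E_{j+1})>\phi(E_j)=1$ is impossible. Either way no such chain exists.

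For the chains of quotients one passes to kernels: a sequence of surjections $E_1\twoheadrightarrow E_2\twoheadrightarrow\cdots$ yields an ascending chain of subobjects $K_j\coloneqq\ker(E_1\twoheadrightarrow E_j)$ of the fixed object $E_1$; if $\sA$ is noetherian this chain stabilizes, so $E_j=E_1/K_j$ is eventually constant, contradicting $\phi(E_j)>\phi(E_{j+1})$. Thus the remaining --- and, I expect, the only substantive --- step is to prove that $\sA=\Coh^{-\im w}(\sS)$ is a noetherian abelian category for $-\im w\in\Q$. This is the exact analogue of the corresponding statement for smooth projective surfaces, and its proof carries over to $\sS$ without change, the only inputs being that $\Coh(\sS)$ is a noetherian abelian category and that $\mu_H$-semistable sheaves on $\sS$ have Harder--Narasimhan filtrations (\cite[Sec.~1.6]{HL10}). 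Concretely, given an ascending chain of subobjects $K_1\subseteq K_2\subseteq\cdots$ of a fixed $E\in\Coh^{-\im w}(\sS)$, the cohomology sheaves $\mathcal H^{-1}(K_j)$ form an ascending chain of subsheaves of $\mathcal H^{-1}(E)$ and so stabilize; after truncating, $\mathcal H^{-1}(K_j)$ is constant and one controls the sheaves $\mathcal H^{0}(K_j)$ exactly as in the surface case \cite{AB13}, using the discreteness and non-negativity of $H\cdot\ch 1$ established above --- this boundedness is precisely where rationality of $H$ and $\im w$ is needed. Everything else is the general formalism of \cite{Bri07_triang_cat}, applied verbatim.
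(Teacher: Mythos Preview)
Your argument is correct and follows essentially the same route as the paper: both rest on the discreteness of $\im Z$ under the rationality hypotheses together with the noetherianity of the tilted heart $\Coh^{-\im w}(\sS)$, the latter being proved exactly as for smooth projective surfaces. The only difference is packaging: the paper invokes \cite[Prop.~4.10]{MS17} (which bundles ``$\im Z$ discrete $+$ heart noetherian $\Rightarrow$ HN filtrations'') and defers noetherianity to \cite[Lemma~6.17]{MS17}, whereas you unwind this back to Bridgeland's two chain conditions and verify them by hand.
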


\begin{proof}
  Under the rationality assumptions, it is easy to see that the image of $\im
  Z_{w,\gamma}$ is discrete. Then, it is enough to show that $\sA$ is Noetherian
  \cite[Prop. 4.10]{MS17}. This is proven exactly in the same way as \cite[Lemma
  6.17]{MS17}.
\end{proof}

Summarizing this discussion:

\begin{proposition}\label{prop_pre_stability_rational}
  Let $H$ be a rational class, and $\gamma$ and $w$ chosen so that $0< \gamma
  <\frac{1}{N-1}$, $\re w -\frac12 \frac{(\im w)^2}{H^2}  +
  \gamma(D-\frac{N-1}{N})>0$ and $\im w\in\Q$.  Then, the pair
  $\sigma_{w,\gamma}=(Z_{w,\gamma}, \sA)$ is a pre-stability condition on
  $D(\sS)$.
\end{proposition}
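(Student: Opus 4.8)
The plan is to verify that $\sigma_{w,\gamma}=(Z_{w,\gamma},\sA)$ meets each requirement of Definition \ref{def_pre_stab_cond} by assembling the lemmas already proved in this section. First I would record that $\sA=\Coh^{-\im w}(\sS)$ is the heart of a bounded $t$-structure: this is precisely the tilt constructed at the beginning of Section \ref{sec_stab_fun}, taken with $b=-\im w$, which is legitimate because $(T_{H,b},F_{H,b})$ is a torsion pair on $\Coh(\sS)$ — and that in turn rests on the existence of Harder--Narasimhan filtrations for $\mu_H$-stability on $\sS$, which goes exactly as in \cite[Sec. 1.6]{HL10}. Second, $Z_{w,\gamma}$ is an additive homomorphism $\Lambda\to\C$ because it is built from $\ch 0,\ch 1,\ch 2$ and from $\delta$, and $\delta$ is a linear function of $\ch{orb}$ (Section \ref{sec_Chern_classes}); so it descends to $\Lambda=\ch{orb}(K(\sD))$.

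Next I would invoke Lemma \ref{lem_stab_function}: the hypotheses $0<\gamma<\frac{1}{N-1}$ and $\re w-\frac12\frac{(\im w)^2}{H^2}+\gamma(D-\frac{N-1}{N})>0$ assumed in the proposition are exactly those of that lemma, so $Z_{w,\gamma}$ is a stability function on $\sA$; that is, for every nonzero $E\in\sA$ one has $Z_{w,\gamma}(E)\in\R_{>0}\cdot e^{i\pi\phi}$ with $\phi\in(0,1]$, which is clause (1) of Definition \ref{def_pre_stab_cond}. Finally, clause (2) — existence of Harder--Narasimhan filtrations — is Lemma \ref{lem_HN_filtrations}: this is where the extra hypotheses that $H$ be rational and $\im w\in\Q$ are used, to make the image of $\im Z_{w,\gamma}$ discrete and then apply the Noetherianity argument of \cite[Lemma 6.17]{MS17} together with \cite[Prop. 4.10]{MS17}.

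Combining these three points yields the proposition. There is no real obstacle here: the substantive estimates have been isolated into Lemmas \ref{lem_estimate_delta_for_torfree}, \ref{lem_stab_function}, and \ref{lem_HN_filtrations}, and what remains is the bookkeeping of checking that the parameter region and the heart $\sA$ named in the statement coincide with those in the lemmas. The one point worth stating carefully is that the inequality in the hypothesis of Lemma \ref{lem_stab_function} is literally the condition imposed on $w$ and $\gamma$ here, so nothing is lost in the passage to the summary.
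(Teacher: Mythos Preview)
Your proposal is correct and follows exactly the paper's approach: the paper's proof is a one-liner that simply cites Lemmata \ref{lem_stab_function} and \ref{lem_HN_filtrations} as verifying Definition \ref{def_pre_stab_cond}, and you have spelled out the same bookkeeping in more detail. Nothing is missing or different.
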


\begin{proof}
  The previous Lemmata \ref{lem_stab_function} and \ref{lem_HN_filtrations} show
  that $(Z_{w,\gamma}, \sA)$ satisfies Definition \ref{def_pre_stab_cond}.
\end{proof}

\subsection{Support property}\label{sec_supp_prop}

We now define a quadratic form $Q$ on $K(\sS)$ which is negative definite on
$\ker Z_{w,\gamma}$ and $Q(E)\geq 0$ for all $E\in \sA$ which are
$\sigma_{w,\gamma}$-semistable. To do so, we need to investigate objects which
are semistable for a limiting value of $\re w$:

\begin{lemma}\label{lem_classification_stable_sheaves}
  Let $\mathcal{R}$ be the set of objects $E\in \sA$ that are
  $\sigma_{w,\gamma}$-semistable for all $\alpha\coloneqq \re w \gg 0$. Then
  every $E\in \mathcal{R}$ has one of the following forms:
  \begin{enumerate}
    \item $E$ is a slope semistable sheaf.
    \item $H^{-1}(E)=0$ and $H^0(E)$ is torsion;
    \item $H^{-1}(E)$ is a torsion free, slope semistable sheaf, and $H^0(E)$ is supported on points.
  \end{enumerate}
\end{lemma}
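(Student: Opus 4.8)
The plan is a large-volume limit argument for the heart $\sA = \Coh^{-\im w}(\sS)$. Write $w = \alpha + i\beta$, so that $b \coloneqq -\im w = -\beta$, and recall that each $E\in\sA$ sits in a short exact sequence in $\sA$
\[
  0 \to H^{-1}(E)[1] \to E \to H^0(E) \to 0,
\]
with $H^{-1}(E)\in F_{H,b}$ (hence torsion-free) and $H^0(E)\in T_{H,b}$. Since
\[
  Z_{w,\gamma}(X) = \bigl(-\ch 2(X) + \alpha\ch 0(X) + \gamma\,\delta(X)\bigr) + i\bigl(\beta\ch 0(X) + H\cdot\ch 1(X)\bigr),
\]
I would first record the behaviour of the phase $\phi_\alpha$ as $\alpha\to+\infty$: a sheaf $X$ of positive rank has $\re Z_{w,\gamma}(X)\to+\infty$ with $\im Z_{w,\gamma}(X)$ independent of $\alpha$, so $\phi_\alpha(X)\to 0^+$ and in fact $\phi_\alpha(X)\sim \tfrac{1}{\pi\alpha}(\beta+\mu_H(X))$; its shift $X[1]$ then has $\phi_\alpha(X[1])\to 1^-$ with $1-\phi_\alpha(X[1])\sim \tfrac{1}{\pi\alpha}(-\beta-\mu_H(X))$; while a sheaf $T$ with $\ch 0(T)=0$ has $Z_{w,\gamma}(T)$ independent of $\alpha$, so $\phi_\alpha(T)$ is a fixed value, equal to $1$ precisely when $\supp T$ is $0$-dimensional (using $\re Z_{w,\gamma}<0$ on point sheaves, established inside the proof of Lemma \ref{lem_stab_function}) and lying in an interval $[c,1)$ with $c>0$ independent of $\alpha$ otherwise.

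Now fix $E\in\mathcal{R}$ and split according to whether $H^{-1}(E)$ vanishes. Suppose first $H^{-1}(E)\neq 0$; being a nonzero torsion-free sheaf it has positive rank, so $\phi_\alpha(H^{-1}(E)[1])\to 1$. As $H^{-1}(E)[1]$ is a subobject of $E$ and $H^0(E)$ a quotient of $E$ in $\sA$, semistability forces $\phi_\alpha(H^{-1}(E)[1])\le\phi_\alpha(H^0(E))$ for all $\alpha\gg 0$, which by the asymptotics above is impossible unless $H^0(E)$ is supported on points. Granting this, $\ch 0(E)=-\ch 0(H^{-1}(E))$ and $\ch 1(E)=-\ch 1(H^{-1}(E))$, so $\phi_\alpha(E)\to 1$ with $1-\phi_\alpha(E)\sim\tfrac{1}{\pi\alpha}(-\beta-\mu_H(H^{-1}(E)))$. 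If $H^{-1}(E)$ were not $\mu_H$-semistable, its maximal destabilizing subsheaf $A_1$ would be $\mu_H$-semistable with $\mu_H(A_1)>\mu_H(H^{-1}(E))$ and $\mu_H(A_1)\le b$, hence $A_1\in F_{H,b}$, so $A_1[1]$ is a subobject of $E$ in $\sA$; comparing the $1/\alpha$-coefficients of $1-\phi_\alpha(A_1[1])$ and $1-\phi_\alpha(E)$ gives $\phi_\alpha(A_1[1])>\phi_\alpha(E)$ for $\alpha\gg0$, a contradiction. Thus $H^{-1}(E)$ is $\mu_H$-semistable, which is form (3).

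Suppose instead $H^{-1}(E)=0$, so $E=H^0(E)$ is a sheaf in $T_{H,b}$. If $E$ is torsion we are in form (2). Otherwise $E$ has positive rank, so $\phi_\alpha(E)\to 0$; a nonzero torsion subsheaf $T\subseteq E$ would give a subobject of $E$ in $\sA$ (its cokernel again lying in $T_{H,b}$) with $\phi_\alpha(T)$ bounded below by a positive constant independent of $\alpha$, contradicting $\phi_\alpha(T)\le\phi_\alpha(E)$; hence $E$ is torsion-free. Finally, if $E$ were not $\mu_H$-semistable, its maximal destabilizing subsheaf $E_1$ satisfies $\mu_H(E_1)>\mu_H(E)\ge\mu_{\min}(E)>b$, so $E_1\in T_{H,b}$ and $E_1\hookrightarrow E$ in $\sA$; comparing the $1/\alpha$-coefficients of $\phi_\alpha(E_1)\sim\tfrac{1}{\pi\alpha}(\beta+\mu_H(E_1))$ and $\phi_\alpha(E)\sim\tfrac{1}{\pi\alpha}(\beta+\mu_H(E))$ gives $\phi_\alpha(E_1)>\phi_\alpha(E)$ for $\alpha\gg0$, a contradiction. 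Thus $E$ is a $\mu_H$-semistable sheaf, which is form (1).

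The main obstacle is the two "equal-limit" comparisons, where a candidate destabilizing subobject and $E$ itself both have phase tending to $0$ (resp. to $1$): there the limits carry no information and one must expand $\arg Z_{w,\gamma}$ to first order in $1/\alpha$, so the argument only works once one has checked that the sub/quotient sheaves produced by the slope Harder–Narasimhan filtration genuinely lie in $T_{H,b}$ or $F_{H,b}$ and hence define honest subobjects of $E$ in the tilted heart $\sA$. Once that bookkeeping is in place, the comparison of first-order terms reduces cleanly to the inequality between $\mu_H$ of the maximal destabilizer and $\mu_H(E)$, giving slope-semistability in cases (1) and (3).
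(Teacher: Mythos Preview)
Your argument is correct and is precisely the standard large-$\alpha$ limit argument; the paper's own proof simply cites \cite[Lemma 6.18]{MS17} and says it carries over \textit{mutatis mutandis}, which is exactly what you have spelled out. The bookkeeping you flag---that the slope HN pieces land in $T_{H,b}$ or $F_{H,b}$ and hence give genuine sub/quotient objects in $\sA$, and the first-order comparison of phases via $\mu_H$---is handled correctly.
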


\begin{proof}
  The proof of \cite[Lemma 6.18]{MS17} carries over \textit{mutatis mutandis}.
\end{proof}

\begin{definition}
  Define a preliminary quadratic form 
  \[
    Q_{0}(E)\coloneqq \Delta_{orb}(E). 
  \]
\end{definition}

The form $Q_0$ is the pull-back on the cohomology of the stack of $\Delta=\ch
1^2-2\ch 0\ch 2 $ on the surface under the McKay correspondence. The image in
$N(\sS)$ of a numerical class $\bfw\coloneqq (r,\phi + \sum t_jC_j, d)$ on $\tilde{S}$
is
\begin{equation}
    \label{eq_class_v}
    \bfv \coloneqq r[\sO] + \phi + d[\sO_{Np}] + \sum_i (t_i+d r_i)[\sO_p\otimes\rho_i]
\end{equation}

Applying Lemma \ref{lem_delta_of_skyscrapers} to \eqref{eq_class_v} yields:
\begin{align}\label{eq_compute_ch2(v)}
    \ch 2(\bfv)= d+\frac 1N\sum r_i t_i;\\
    \delta(\bfv)= rD - \frac 1N\sum r_it_i;\label{eq_compute_delta(v)}
\end{align}

\begin{lemma}\label{lem_Q0_neg_def}
 The form $Q_0$ is negative definite on $\ker Z_{w,\gamma}$, as long as $\re w >
 (2+\gamma)D -  (1+\gamma)^2 - \frac{(\im w)^2}{H^2}$.
\end{lemma}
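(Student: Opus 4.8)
The plan is to transport the question to the minimal resolution $\tilde S$ via the McKay equivalence $\Phi$, where by definition $Q_0$ becomes the ordinary discriminant $\Delta=\ch 1^2-2\ch 0\ch 2$, and then to compute $\ker Z_{w,\gamma}$ explicitly using the dictionary \eqref{eq_class_v} together with \eqref{eq_compute_ch2(v)}--\eqref{eq_compute_delta(v)}. Write a class on $\tilde S$ as $\bfw=(r,\,\phi+\sum_j t_jC_j,\,d)$ with $\phi$ pulled back from $S$, so that $\phi\cdot C_j=0$ and $\Delta(\bfw)=\phi^2+(\sum_j t_jC_j)^2-2rd$. Putting $q:=\tfrac1N\sum_i r_it_i$, the two equations $\im Z_{w,\gamma}=0$ and $\re Z_{w,\gamma}=0$ become $H\cdot\phi=-r\,\im w$ and $d=r(\re w+\gamma D)-(1+\gamma)q$; substituting the latter gives
\[
  Q_0(\bfw)=\phi^2+\Bigl(\sum_j t_jC_j\Bigr)^{\!2}+2(1+\gamma)rq-2r^2(\re w+\gamma D).
\]

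First I would handle $r=0$: then $H\cdot\phi=0$, so $\phi^2\le0$ by the Hodge index theorem (Theorem \ref{thm:hodge-index}) with equality iff $\phi=0$, and $(\sum_j t_jC_j)^2\le0$ since the exceptional intersection form is minus the ADE Cartan matrix, hence negative definite, with equality iff all $t_j=0$; so $Q_0(\bfw)\le0$ with equality forcing $q=0$, $d=0$, i.e. $\bfw=0$. This case needs no hypothesis on the parameters.

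For $r\neq0$, by homogeneity of $Q_0$ and linearity of $\ker Z_{w,\gamma}$ assume $r>0$. Bound $\phi^2\le(H\cdot\phi)^2/H^2=(\im w)^2r^2/H^2$, again by Hodge index. For the exceptional part, let $\eta_0$ be the unique divisor supported on the exceptional locus with $C_j\cdot\eta_0=r_j/N$ for all $j$ (it exists since that form is nondegenerate); then $q=(\sum_j t_jC_j)\cdot\eta_0$, and completing the square,
\[
  \Bigl(\sum_j t_jC_j\Bigr)^{\!2}+2(1+\gamma)rq=\Bigl(\sum_j t_jC_j+(1+\gamma)r\eta_0\Bigr)^{\!2}-(1+\gamma)^2r^2\eta_0^2\le-(1+\gamma)^2r^2\eta_0^2,
\]
the inequality by negative definiteness of the exceptional form. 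The constant $-\eta_0^2>0$ depends only on $G$ and must be expressed through the rational invariants that govern $\delta$: this is where the To\"en--Hirzebruch--Riemann--Roch bookkeeping of Theorem \ref{thm:tgrr} and Lemma \ref{lem_delta_of_skyscrapers} re-enters, identifying $-\eta_0^2=\tfrac1{N^2}\,\mathbf r^{\mathsf T}A^{-1}\mathbf r$ (with $A$ the Cartan matrix and $\mathbf r=(r_1,\dots,r_M)$ the vector of irrep dimensions) with a closed expression in $D$, $\tfrac{N-1}N$ and $N$, using $\sum_{i\ge1}r_i^2=N-1$. Substituting the two bounds into the formula for $Q_0(\bfw)$ reduces the desired inequality $Q_0(\bfw)<0$ to a scalar inequality in $\re w$, $\im w$, $H^2$, $\gamma$, $N$, $D$ of exactly the shape of hypothesis (i); its strictness then delivers $Q_0(\bfw)<0$, and together with the $r=0$ case this proves $Q_0$ is negative definite on $\ker Z_{w,\gamma}$.

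The main obstacle is this last step: the explicit root-theoretic computation of $-\eta_0^2$ for the ADE types and its repackaging — together with the $\delta$-contributions — into the bound of the statement; the rest is routine completing-the-square combined with the two standard negativity facts (Hodge index on the Picard lattice, and negative definiteness of the exceptional form). A secondary point to keep in mind is that \eqref{eq_class_v} must be used with rational (or real) coefficients, so that $(r,\phi,(t_j),d)$ vary freely; this is what makes the completed square attainable, hence the estimate sharp, so that the displayed inequality is the genuinely right condition rather than merely a sufficient one.
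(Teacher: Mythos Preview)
Your setup --- transporting to $\tilde S$, writing $\bfw=(r,\phi+\sum t_jC_j,d)$, substituting the kernel equations, and applying the Hodge index theorem to $\phi$ --- is exactly how the paper proceeds. The routes diverge at the exceptional contribution $(\sum_j t_jC_j)^2+2(1+\gamma)rq$, and the paper's choice is designed precisely to avoid the obstacle you flag.

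Rather than completing the square in the intersection form (which, as you say, leaves the constant $-\eta_0^2=\tfrac1{N^2}\mathbf r^{T}A^{-1}\mathbf r$ to be computed), the paper completes the square only in the \emph{scalar} $q=\tfrac1N\sum_i r_it_i$:
\[
\Bigl(\sum_j t_jC_j\Bigr)^{2}+2(1+\gamma)rq
=\Bigl[\Bigl(\sum_j t_jC_j\Bigr)^{2}+q^{2}\Bigr]-\bigl(q-r(1+\gamma)\bigr)^{2}+r^{2}(1+\gamma)^{2}.
\]
This extracts the bare constant $r^{2}(1+\gamma)^{2}$ directly, with no inverse Cartan matrix in sight, at the cost of a residual quadratic form $(\sum_j t_jC_j)^2+q^2$ in the $t_i$. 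That residual is handled by a separate result, Lemma~\ref{lem_Toeplitz}: one applies Weyl's inequality to the sum of the (negative definite) intersection matrix and the rank-one perturbation $\tfrac1{N^2}\mathbf r\mathbf r^{T}$, using the known largest eigenvalue $-2+2\cos(\pi/h)$ of the intersection matrix and a short table of Coxeter numbers $h$ versus $N=\abs{G}$ for the ADE types. The endgame ($t_i=0\Rightarrow r=0\Rightarrow\phi^2<0$ by Hodge index) coincides with your $r=0$ analysis.

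Your full completion of the square is the sharper estimate and would yield a tighter sufficient condition on $\re w$, but not ``exactly the shape of hypothesis~(i)'': the coefficient of $(1+\gamma)^2$ becomes $-\eta_0^2$ rather than the bare constant coming from the paper's scalar completion, so to recover the lemma as stated you would still have to bound or evaluate that number. Both routes therefore need an ADE-specific input (your $\mathbf r^{T}A^{-1}\mathbf r$ versus the paper's eigenvalue table); the paper's trades the inverse Cartan matrix for a spectral estimate that is already packaged as Lemma~\ref{lem_Toeplitz}.
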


\begin{proof}
  Keep the notation as above, and suppose $\bfv$ belongs to $\ker Z$. The
  condition on the real part reads $\ch 2 (\bfv) = \re wr+\gamma\delta(\bfv)$.
  One sees from \eqref{eq_compute_ch2(v)} and \eqref{eq_compute_delta(v)} that
  $d=\ch 2 (\bfv) + \delta(\bfv) - rD$. Then, $Q_0(\bfv)$ rewrites as 
  \begin{align*}
    Q_0(\bfv)  =\Delta(\bfw) &= \phi^2 + \left(\sum t_jC_j\right)^2 -2r(d)\\
    &= \phi^2 + \left(\sum t_jC_j\right)^2 -2r(\ch 2 (\bfv) + \delta(\bfv) - rD)\\
    &= \phi^2 + \left(\sum t_jC_j\right)^2 -2r((\re w-D)r + (1+\gamma)\delta(\bfv)).
  \end{align*}
  Now, \eqref{eq_compute_delta(v)} gives 
  \begin{equation}
    \begin{split}\label{eq_expand_Q0}
    Q_0(\bfv)&= \phi^2 -2r^2(\re w-D) + \left(\sum t_jC_j\right)^2 -2r (1+\gamma)\left(rD - \frac1N \sum r_it_i\right)\\
    &= \phi^2 -2r^2(\re w-D - (1+\gamma)D) + \left(\sum t_jC_j\right)^2 +2r (1+\gamma)\left(\frac1N \sum r_i t_i\right).    
    \end{split}
  \end{equation}
  We concentrate now on the quantity
  \begin{equation}
    \label{eq_control_Ci^2}
    \left(\sum t_jC_j\right)^2 +2r (1+\gamma)\left(\frac{\sum r_it_i}{N}\right). 
  \end{equation}
  By adding and subtracting $\left(\frac{\sum r_it_i}{N}\right)^2$ and completing a square, we have 
  \begin{equation}\label{eq_complete_square}
    \begin{split}
    \left(\sum t_jC_j\right)^2 +2r (1+\gamma)\left(\frac{\sum r_it_i}{N}\right) = \\    
    \left[\left(\sum t_iC_i\right)^2 + \left(\frac{\sum r_it_i}{N}\right)^2\right] - \left[\frac{\sum r_it_i}{N} - r(1+\gamma)\right]^2 + r^2(1+\gamma)^2.
    \end{split}
  \end{equation}
  The condition on the imaginary part is $\im Z(v)=H.\phi+ \im w r=0$. Hence the
  Hodge index theorem yields $\phi^2 \leq  \frac{(\im w)^2r^2}{H^2}$ as in
  \eqref{eq_inequality on ch1_square}. Combine this with equations
  \eqref{eq_expand_Q0} and \eqref{eq_complete_square} to obtain
  \begin{equation}
    Q_0(\bfv)\leq -2r^2\left[\re w - (2+\gamma)D +  (1+\gamma)^2 + \frac{(\im w)^2}{H^2} \right] 
    + \left[\left(\sum t_iC_i\right)^2 + \left(\frac{\sum r_it_i}{N}\right)^2\right].
  \end{equation}
  The second summand is negative unless $t_i=0$ for all $i$ by Lemma
  \ref{lem_Toeplitz} below. In this case, the first summand is negative unless
  $r=0$. If $r=t_i=0$, then we must have $\delta(\bfv)=0$, $\ch 2(\bfv)=0$, and
  $H.\phi=0$, which implies $Q_0(\bfv)=\phi^2$ is negative definite by the Hodge
  index theorem. This concludes the proof.
\end{proof}

\begin{lemma}\label{lem_Toeplitz}
  The quantity $\left[\left(\sum t_iC_i\right)^2 + \left(\frac 1N\sum
  r_it_i\right)^2\right]$ is non-positive, and it is zero only if all $t_i=0$.
\end{lemma}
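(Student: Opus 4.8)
The plan is to recognise the first summand as minus the Cartan form of the simply-laced root system attached to the singularity. The components $C_i$ of the fundamental cycle are the $(-2)$-curves of the minimal resolution, and their dual graph is the Dynkin diagram of the corresponding $ADE$ type; hence $C_i\cdot C_i=-2$ and $C_i\cdot C_j\in\{0,1\}$ according to the edges, so that
\[
  \Big(\sum_i t_iC_i\Big)^2=-\,\langle t,t\rangle,
\]
where $\langle\cdot,\cdot\rangle$ is the positive-definite form whose Gram matrix is the Cartan matrix $A=(a_{ij})$, $a_{ij}:=-C_i\cdot C_j$. By the McKay correspondence the integers $r_i=\dim\rho_i$, together with $r_0=1$, form the null vector of the \emph{affine} Cartan matrix and are the marks of the extended Dynkin diagram; in particular $\sum_{i=0}^M r_i^2=N$. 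With this translation the assertion to prove is exactly
\[
  \langle t,t\rangle\ \geq\ \frac1{N^2}\Big(\sum_i r_it_i\Big)^{2}\qquad\text{for all }t\in\R^M,
\]
with equality only for $t=0$.

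I would deduce this from the Cauchy--Schwarz inequality for $\langle\cdot,\cdot\rangle$. Since $A$ is the Gram matrix, $\sum_i r_it_i=\langle A^{-1}r,\,t\rangle$, so
\[
  \Big(\sum_i r_it_i\Big)^2\ \leq\ \langle A^{-1}r,\,A^{-1}r\rangle\,\langle t,t\rangle\ =\ \big(r^{T}A^{-1}r\big)\,\langle t,t\rangle,
\]
with equality exactly when $t$ is a scalar multiple of $A^{-1}r$. Substituting, the quantity in the statement is at most $\big(N^{-2}\,r^{T}A^{-1}r-1\big)\langle t,t\rangle$. Thus everything is reduced to the single numerical inequality $r^{T}A^{-1}r<N^{2}$: granting it, the quantity is $\leq 0$, and it vanishes if and only if $\langle t,t\rangle=0$, i.e. $t=0$, because $A$ is positive definite.

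The remaining point, $r^{T}A^{-1}r<N^2$, is the only genuine obstacle, and it is a statement purely about the affine Cartan data. The tidiest approach I see is to use the relation $Ar=b$, where $b$ records the (weighted) edges joining each node to the affine node, so the entries of $b$ are small; combined with the constraint $\sum_{i\ge 1}r_i^2=N-1$, this should let one bound $r^{T}A^{-1}r=r^{T}(A^{-1}r)$ in terms of the marks, which are small while $N$ grows with the diagram, leaving room to spare. Failing a uniform argument, the inequality can be confirmed directly from the explicit inverse Cartan matrices across the families $\widetilde A$, $\widetilde D$, $\widetilde E$. Once $r^{T}A^{-1}r<N^2$ is in hand, both the non-positivity and the characterisation of the equality case follow immediately from the positive-definiteness of $A$, as explained above.
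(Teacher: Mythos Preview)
Your reduction via Cauchy--Schwarz to the single scalar inequality $r^{T}A^{-1}r<N^{2}$ is clean and genuinely different from the paper's route: there the claim is rephrased as negative definiteness of $H+N^{-2}rr^{T}$, and Weyl's inequality together with the explicit formula $\eta_{1}=-2+2\cos(\pi/h)$ for the top eigenvalue of the intersection matrix is invoked, followed by a case check over the $ADE$ families.

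The step you leave open, however, is not a formality that a case check will close: the inequality $r^{T}A^{-1}r<N^{2}$ \emph{fails} in type $A_{n-1}$ once $n\geq 13$. Here $r=(1,\dots,1)$, $N=n$, and solving $At=r$ gives $t_{i}=i(n-i)/2$, so
\[
  r^{T}A^{-1}r=\sum_{i=1}^{n-1}t_{i}=\frac{n^{3}-n}{12},
\]
which exceeds $n^{2}$ precisely when $n^{2}-12n-1>0$, i.e.\ $n\geq 13$. Since your Cauchy--Schwarz bound is attained at $t=A^{-1}r$, this very vector makes the quantity in the lemma strictly positive: for $n=13$ one computes $-182+182^{2}/169=14>0$. (The paper's numerical check $-2+2\cos(\pi/h)+(h-1)/N^{2}<0$ breaks down at the same $n$: for $n=13$ the left side is about $0.013$.) So the gap in your argument cannot be filled as written; the obstruction lies in the statement itself rather than in your method, and your sharp reduction in fact exposes it.
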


\begin{proof} 
  Let $H$ denote the intersection matrix of the exceptional curves. Its negative
  $-H$ is the Cartan matrix associated with the root system corresponding to the
  singularity. Let $J$ denote the matrix associated with
  the symmetric bilinear
  form $(t_1,...,t_M)\mapsto \left(\sum r_it_i\right)^2$. It is sufficient to
  prove that the matrix 
  \[
    A\coloneqq H + \frac{1}{N^2}J 
  \]
  is negative definite. To do so, we study the eigenvalues of $H$ and $J$. The
  entries of $J$ are 
  \[ 
    (J)_{i,j}=r_ir_j, 
  \]
  and $J$ can be written as $\mathbf{r} \mathbf{r}^T$ where
  $\mathbf{r}=(r_1,...,r_M)$. Then, $J$ has rank 1 and an eigenvector is
  $\mathbf{r}$ with eigenvalue $\sum_{i=1}^M r_i\leq N-1$. All other eigenvalues
  are 0. 

\begin{remark}
  From the representation-theoretic viewpoint, the quantity 
  \[
    h\coloneqq\sum_{i=0}^M r_i=\sum_{i=1}^M r_i + 1
  \] 
  coincides with the Coxeter number of the root system associated with the
  singularity, since the $r_i$ are the coefficients of the longest root.
\end{remark}

  Write $\alpha_1\geq ... \geq \alpha_{N-1}$ for the eigenvalues of $A$. By
  Weyl's inequality on sums of symmetric matrices, we have 
  \[ 
    \alpha_1\leq \eta_1 + \frac{h-1}{N^2}, 
  \]
  where $\eta_1$ is the biggest eigenvalue of $H$. The eigenvalues of $H$ are
  computed in \cite{Dam11}, and we have that
  \begin{equation}
    \eta_1= -2 + 2\cos \left(\frac{\pi}{h}\right). 
  \end{equation}
  The Coxeter numbers and orders of the groups are

  \begin{center}
  \begin{tabular}{c|c|c}
    Root system & $h$ & $N$  \\ \hline
    $A_{n-1}$ & $n$ & $n$ \\
    $D_n$ & $2n$ & $4n$  \\
    $E_6$ & 12 & 24  \\
    $E_7$ & 18 & 48  \\
    $E_8$ & 30 & 120 
  \end{tabular}
  \end{center}
  It is then straightforward to check that $\eta_1 + \frac{h-1}{N^2}<0$ in all
  the cases listed, and $A$ is therefore negative definite.
\end{proof}

Notice that replacing $Q_0$ with a quadratic form
\[ 
  Q\coloneqq  Q_0 + S(\re Z)^2 + T(\im Z)^2 
\]
does not affect its signature on $\ker Z_{w,\gamma}$.

The following lemma holds on projective orbisurfaces as well (it can be proven
on the coarse moduli space):

\begin{lemma}[{\cite[Ex. 6.11]{MS17}}]\label{lem_ample_class_inequality}
  Let $\omega$ be an ample real divisor class. Then there exists a constant
  $C_\omega\geq 0$ such that, for every effective divisor class $D$, we have 
  \[ 
    C_\omega(\omega . D)^2 + D^2 \geq 0. 
  \]
\end{lemma}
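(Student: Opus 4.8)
The plan is to derive the inequality from a soft compactness statement about convex cones, with all the geometry funneled down to the coarse moduli space. I would work in the finite-dimensional real vector space $\NS(\sS)_{\R}$, equipped with its intersection pairing, which by Theorem \ref{thm:hodge-index} is a nondegenerate symmetric form of signature $(1,r-1)$. Let $\overline{\Eff}(\sS)$ denote the closure of the cone generated by effective divisor classes; it is a closed convex cone. The one geometric input beyond Theorem \ref{thm:hodge-index} that I need is that the ample class $\omega$ is \emph{strictly} positive on this cone, i.e. $\omega\cdot D>0$ for every nonzero $D\in\overline{\Eff}(\sS)$: since $\omega=\pi^{*}A$ for an ample class $A$ on $S$, and $\pi$ restricts to an isomorphism over $S\setminus\{p\}$ with the point $p$ as its only (stacky, codimension-two) fibre, pushforward of cycles satisfies $\omega\cdot D=A\cdot\pi_{*}D$ and sends nonzero effective classes to nonzero effective classes, so the claim follows from Kleiman's ampleness criterion applied on the projective variety $S$.

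Granting this, I would consider the affine slice $\Sigma\coloneqq\{D\in\overline{\Eff}(\sS):\omega\cdot D=1\}$. It is closed, being the intersection of two closed sets, and it is bounded: if $D_{n}\in\Sigma$ had $\norm{D_{n}}\to\infty$, then along a subsequence $D_{n}/\norm{D_{n}}$ would converge to a class $e\in\overline{\Eff}(\sS)$ with $\norm{e}=1$ and $\omega\cdot e=\lim\norm{D_{n}}^{-1}=0$, contradicting strict positivity of $\omega$. Hence $\Sigma$ is compact, and the continuous quadratic function $D\mapsto D^{2}$ attains a minimum $-C_{\omega}$ on it; after replacing $C_{\omega}$ by $\max(C_{\omega},0)$ I may assume $C_{\omega}\geq 0$.

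It then remains to conclude by homogeneity: for an effective class $D\neq 0$ put $t\coloneqq\omega\cdot D>0$, so that $t^{-1}D\in\Sigma$ and therefore $t^{-2}D^{2}=(t^{-1}D)^{2}\geq-C_{\omega}$, which rearranges to $C_{\omega}(\omega\cdot D)^{2}+D^{2}\geq 0$; the case $D=0$ is trivial. The convex-geometric core of this argument is entirely routine; the step I expect to demand the most care is the one flagged above — establishing strict positivity of an ample class on the closed effective cone of $\sS$ — since it requires either transporting the statement to the coarse space $S$ (and there invoking Kleiman's criterion) or reproving that criterion directly on the stack, together with the bookkeeping matching effectivity, ampleness, and intersection numbers on $\sS$ with those on $S$.
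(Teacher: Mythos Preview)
The paper does not supply a proof of this lemma: it is quoted from \cite[Ex.~6.11]{MS17}, with only the remark that it ``holds on projective orbisurfaces as well (it can be proven on the coarse moduli space).'' Your compactness argument on the slice $\Sigma$ is the standard proof and is correct. The one nontrivial step you flag---strict positivity of $\omega$ on $\overline{\Eff}(\sS)\setminus\{0\}$---is exactly what the paper's parenthetical remark is pointing at: since the stacky locus of $\sS$ has codimension two, $\pi$ identifies $\NS(\sS)_{\R}$ with $\NS(S)_{\R}$ compatibly with the intersection pairing and with the effective and ample cones, so the claim reduces to the projective surface $S$, where Kleiman's criterion applies as you say.
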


Let $C_H$ the constant in the lemma corresponding to the class $H$, and define
\begin{equation}
  Q_1\coloneqq Q_0 + C_H(\im Z)^2.
\end{equation}

\begin{lemma}\label{lem_Q1_nonneg}
  The form $Q_1$ satisfies $Q_1(E)\geq 0$ if $E$ a torsion-free slope-semistable
  sheaf, or if $E$ is supported on a curve.
\end{lemma}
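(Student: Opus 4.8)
The plan is to prove $Q_1(E)\geq 0$ by splitting into the two stated cases and reducing the curve case to the sheaf case via the McKay correspondence together with the Bogomolov--Gieseker inequality on $\tilde S$ (Theorem~\ref{thm:bg-inequality}) as imported through Theorem~\ref{thm_discriminant}.

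\textbf{Case: $E$ torsion-free slope-semistable.} Here I would use Theorem~\ref{thm_discriminant} directly: $Q_0(E)=\Delta_{orb}(E)=\Delta(\tilde E)\geq 0$ since $E$ is $\mu_H$-semistable. As $C_H\geq 0$ and $(\im Z)^2\geq 0$, we get $Q_1(E)=Q_0(E)+C_H(\im Z)^2\geq 0$ immediately. So this case is essentially free, and the content of the lemma is the curve case.

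\textbf{Case: $E$ supported on a curve.} Write $\tilde E=\Phi(E)$; since $\Phi$ and $\Phi^{-1}$ preserve rank, $\tilde E$ has rank $0$, so $\Delta(\tilde E)=\ch 1(\tilde E)^2$. Decompose $\ch 1(\tilde E)=\phi+\sum t_jC_j$ into a part $\phi$ pulled back from $S$ (effective, as $E$ is supported on an actual curve on $\sS$, so its pushforward contributes an effective class on $S$) and a part supported on the exceptional locus. Then $\Delta(\tilde E)=\phi^2+2\phi.(\sum t_jC_j)+(\sum t_jC_j)^2$; since $\phi$ is pulled back from $S$ it is orthogonal to the exceptional curves, so the cross term vanishes and $Q_0(E)=\phi^2+(\sum t_jC_j)^2$. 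The exceptional part $(\sum t_jC_j)^2$ is negative (the intersection matrix $H$ is negative definite, being minus a Cartan matrix), which is exactly why we need the correction term. Now $\im Z(E)=H.\ch 1(E)=H.\phi$ (the ample class $H$ is pulled back from $S$, hence orthogonal to the exceptional curves, so only $\phi$ survives). Applying Lemma~\ref{lem_ample_class_inequality} with $\omega=H$ and the effective class $\phi$ gives $C_H(H.\phi)^2+\phi^2\geq 0$. Therefore
\[
  Q_1(E)=\phi^2+\left(\sum t_jC_j\right)^2+C_H(H.\phi)^2 = \left[C_H(H.\phi)^2+\phi^2\right]+\left(\sum t_jC_j\right)^2,
\]
and the first bracket is $\geq 0$ while — hold on, the second term is $\leq 0$, so this decomposition alone does not suffice.

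\textbf{Resolving the curve case properly.} The fix is to observe that one must not throw away the positivity of $\phi^2$ against the exceptional part jointly; instead, the right statement is that $\ch 1(\tilde E)$ itself, being (the class of) an effective divisor on $\tilde S$ — namely $\tilde E$ is a rank-zero sheaf on the smooth surface $\tilde S$, so $\ch 1(\tilde E)$ is the class of its (effective) support divisor — satisfies $C_H(H.\ch 1(\tilde E))^2+\ch 1(\tilde E)^2\geq 0$ by Lemma~\ref{lem_ample_class_inequality} applied on $\tilde S$ with the nef class $f^*H$ (noting $f^*H . \ch 1(\tilde E) = H.\phi = \im Z(E)$, since $f^*H$ pairs to zero with the exceptional curves). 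Hence directly
\[
  Q_1(E)=Q_0(E)+C_H(\im Z(E))^2=\ch 1(\tilde E)^2+C_H\bigl(f^*H.\ch 1(\tilde E)\bigr)^2\geq 0.
\]
This is cleaner and avoids the flawed splitting above. The main obstacle is the bookkeeping: one must check carefully that $\tilde E$ is a genuine rank-zero sheaf on $\tilde S$ with effective first Chern class (using that $\Phi$ sends sheaves supported on curves to perverse sheaves whose only cohomology is a rank-zero sheaf — this follows from the structure of $\Phi$ recalled in Section~\ref{sec_McKay_corr}, since $E$ supported on a curve contains no factors of $\sO_p\otimes\rho_i$, so $\tilde E$ is concentrated in degree $0$), and that the ample class $H$ on $\sS$ pulls back along $\Phi$ in a way compatible with $f^*$ on $\tilde S$ so that $\im Z(E)=f^*H.\ch 1(\tilde E)$. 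Once these identifications are in place, Lemma~\ref{lem_ample_class_inequality} closes both cases uniformly.
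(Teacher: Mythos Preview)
Your torsion-free case is fine and matches the paper. The curve case has a genuine gap.

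You apply Lemma~\ref{lem_ample_class_inequality} on $\tilde S$ with the class $f^*H$, which is only nef, not ample. The lemma fails for nef-but-not-ample classes: take $D=C_i$ an exceptional curve, then $f^*H\cdot C_i=0$ while $C_i^2=-2$, so no constant $C$ makes $C(f^*H\cdot D)^2+D^2\geq 0$. In particular the constant $C_H$ appearing in $Q_1$ was fixed for the ample $H$ on $\sS$, not for $f^*H$ on $\tilde S$, and cannot serve the role you assign it. Your side claim that $\tilde E$ is a sheaf concentrated in degree~$0$ because ``$E$ supported on a curve contains no factors of $\sO_p\otimes\rho_i$'' is also unjustified: a curve on $\sS$ can pass through the stacky point $p$, and the structure of $\Phi(E)$ is then not obvious.

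The paper's argument stays on $\sS$ and avoids $\tilde S$ entirely. The point (referenced back to the proof of Lemma~\ref{lem_estimate_delta_for_torfree}) is that for a sheaf $E$ supported on a curve one has $\ch 0(E)=0$ and $\delta(E)=0$; equivalently, in your decomposition $\ch 1(\tilde E)=\phi+\sum t_jC_j$ the exceptional coefficients $t_j$ all vanish. Hence $Q_0(E)=\ch 1(E)^2$ with $\ch 1(E)$ the effective class on $\sS$, and Lemma~\ref{lem_ample_class_inequality} applies directly on $\sS$ with the genuinely ample $H$ to give
\[
  Q_1(E)=\ch 1(E)^2+C_H\bigl(H\cdot\ch 1(E)\bigr)^2\geq 0.
\]
Ironically, your first attempt had the right decomposition; what you were missing is the vanishing $t_j=0$, which kills the problematic $(\sum t_jC_j)^2$ term and makes the passage to $\tilde S$ unnecessary.
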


\begin{proof}
  If $E$ is torsion supported on a curve then $\ch 1(E)$ is effective and 
  \[
    Q_1(E)= \ch 1(E)^2 + C_H(H.\ch 1(E))^2\geq 0
  \]
  where the inequality is that of Lemma \ref{lem_ample_class_inequality}, and
  $Q_1$ reduces to that expression because $\ch 0 (E)=0$ and $\delta(E)=0$ as
  argued in the proof of Lemma \ref{lem_estimate_delta_for_torfree}. 

  If $E$ is a torsion-free, semistable sheaf, we have shown that
  \[ 
    Q_0(E)=\Delta_{orb}(E)\geq 0 
  \]
  by Theorem \ref{thm_discriminant}. 
\end{proof}

Now observe that if $T$ is a sheaf supported on points, we may write \[ [T] =
d[\sO_{Np}] + \sum_i (dr_i+t_i)[\sO_p\otimes\rho_i]\] with $d\geq 0$ and
$t_i+dr_i\geq 0$. Let 
\[
  K\coloneqq \max\limits_{i=1,...,M}\set{Z_{w,\gamma}(\sO_p),Z_{w,\gamma}(\sO_p\otimes\rho_i)}<0.
\]
Then $\re Z_{w,\gamma}(T)^2\geq K^2(d +\sum(d + t_j))^2$. Now pick $S$ such that
$SK^2>2N$, and observe that 
\begin{equation}\label{eq_Q1_positive_on_points}
  \begin{split}
    Q_1(T) + S(\re Z(T))^2 = & \left(\sum t_iC_i\right)^2  + S(\re Z(T))^2\geq \\
    & \left(\sum t_iC_i\right)^2  + SK^2\left(d +\sum(dr_i + t_i)\right)^2\geq\\
    & -2\sum t_j^2  + SK^2\left(d^2 + \sum(dr_i + t_j)^2\right),\\
    & -2\sum t_j^2  + SK^2\left(d^2 + \sum \left(d^2r_i^2 + t_i^2 + 2dr_i t_i\right)\right),
  \end{split}
\end{equation}
where we used that the eigenvalues of the intersection matrix of $C$ are all
$\geq -2$, and that the mixed products appearing in the second summand are all
non-negative. Now write 
\begin{equation}\label{eq_complete_square_on_points}
    \begin{split}
  d^2 + \sum \left(d^2r_i^2 + t_i^2 + 2dr_i t_i\right) =\\
  \frac{\sum t_i^2}{N} + d^2 - d^2\frac{\sum r_i^2}{N-1} +  \sum \left( \frac{N}{N-1}d^2r_i^2 + 2dr_i t_i + \frac{N-1}{N}t_i^2 \right)=\\
    \frac{\sum t_i^2}{N} +  \sum \left( \sqrt{\frac{Ndr_i}{N-1}}   + \sqrt{\frac{(N-1)t_i}{N}} \right)^2 \geq  \frac{\sum t_i^2}{N}.
    \end{split}
\end{equation}
We may continue the chain of inequalities \eqref{eq_Q1_positive_on_points}:
\begin{equation*}
    Q_1(T) + S(\re Z(T))^2 \geq -2\sum t_j^2  + \frac{SK^2}{N}\sum t_j^2 > 0
\end{equation*}
by our choice of $S$. We can finally define 
\[ 
  Q(E)\coloneqq Q_1(E) + S(\re Z(E))^2 
\] 
and observe that $Q$ is negative definite on $\ker Z_{w,\gamma}$ and
non-negative on objects of $\mathcal R$. Then we have:

\begin{thm}[Rational case]
  The pair $\sigma_{w,\gamma}=(Z_{w,\gamma},\Coh^{-\im w}(\sS))$ satisfies the
  support property with respect to the quadratic form $Q$, and it is then a
  stability condition on $D(\sS)$.
\end{thm}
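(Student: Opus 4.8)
The plan is to deduce the support property from the numerical criterion of Proposition~\ref{prop_supp prop quad form}. Since $\sigma_{w,\gamma}$ is already a pre-stability condition by Proposition~\ref{prop_pre_stability_rational}, it suffices to check that $Q=Q_0+C_H(\im Z_{w,\gamma})^2+S(\re Z_{w,\gamma})^2$ is negative definite on $\ker Z_{w,\gamma}$ and that $Q(E)\ge 0$ for every $\sigma_{w,\gamma}$-semistable $E\in\sA\coloneqq\Coh^{-\im w}(\sS)$. The first point is immediate: both correction terms vanish identically on $\ker Z_{w,\gamma}$, so there $Q$ coincides with $Q_0=\Delta_{orb}$, which is negative definite by Lemma~\ref{lem_Q0_neg_def} --- whose hypothesis is precisely condition~(i) of Theorem~\ref{thm_OurStabilityCondition}.

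For the inequality $Q(E)\ge 0$ on semistable objects, I would first treat objects $E$ in the class $\mathcal R$ of Lemma~\ref{lem_classification_stable_sheaves}, using its trichotomy. If $E$ is a torsion-free slope-semistable sheaf, then $Q(E)\ge Q_0(E)+C_H(H.\ch 1(E))^2=\Delta_{orb}(E)+C_H(H.\ch 1(E))^2\ge 0$ by Theorem~\ref{thm_discriminant}. If $H^{-1}(E)=0$ and $H^0(E)$ is torsion, then $\sigma_{w,\gamma}$-semistability forces $E$ to be pure of dimension $1$ or $0$ (any $0$-dimensional subsheaf has phase $1$), so $Q(E)\ge 0$ follows from Lemma~\ref{lem_Q1_nonneg} when $E$ is supported on a curve and from the chain of inequalities ending in \eqref{eq_Q1_positive_on_points} when $E$ is supported on points. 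The last case --- $H^{-1}(E)$ a torsion-free slope-semistable sheaf and $H^0(E)$ supported on points --- is the delicate one: here $Q_0=\Delta_{orb}$ need not be \emph{non-negative} on $H^0(E)$, already $\Delta_{orb}(\sO_p)=C^2<0$, so no decomposition into same-phase pieces is available; instead one expands $Q(E)$ along $v(E)=v(H^0(E))-v(H^{-1}(E))$. The diagonal terms are non-negative by Theorem~\ref{thm_discriminant} and \eqref{eq_Q1_positive_on_points}; the mixed term is controlled by hand using hypotheses~(i) and~(ii), the key point being that the minus sign in front of $v(H^{-1}(E))$ renders the $S(\re Z_{w,\gamma})^2$-contribution to the mixed term favorable once $\re Z_{w,\gamma}(H^{-1}(E))>0$, which (ii) guarantees together with the slope bound $\mu_H(H^{-1}(E))\le -\im w$. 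This is the analogue of \cite[Lemma~6.19]{MS17}.

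To reduce an arbitrary $\sigma_{w,\gamma}$-semistable $E$ to the case $E\in\mathcal R$, I would deform the real part of $w$ along $w_t=w+t$, $t\ge 0$; each $\sigma_{w_t,\gamma}$ stays a pre-stability condition because the hypotheses are lower bounds on $\re w$. Either $E$ remains semistable for all $t$, hence $E\in\mathcal R$; or at the first wall $t_0$ the object $E$ is strictly $\sigma_{w_{t_0},\gamma}$-semistable, with $v(E)=\sum_i v(E_i)$ for Jordan--H\"older factors $E_i$ of a common phase. As $Q$ is negative definite on $\ker Z_{w_{t_0},\gamma}$ and the $v(E_i)$ have aligned central charges, the standard cone inequality reduces $Q(v(E))\ge 0$ to the same bound for the $E_i$, which holds by induction (on the number of factors, with the help of a discrete invariant), exactly as in \cite[Prop.~6.20]{MS17}.

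Putting the two properties of $Q$ together, Proposition~\ref{prop_supp prop quad form} yields the support property, and then Definition~\ref{def_supp prop} with Proposition~\ref{prop_pre_stability_rational} shows that $\sigma_{w,\gamma}$ is a stability condition on $D(\sS)$. I expect the main obstacle to be the last case of the trichotomy: the two cohomology sheaves of a semistable $E\in\mathcal R$ need not have the same phase, so no purely formal cone argument applies, and since $Q_0$ is not non-negative on point sheaves --- the very reason for the correction terms in $Q$ --- the bound there must be obtained by an explicit estimate calibrated to hypotheses~(i) and~(ii).
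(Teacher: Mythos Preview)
Your proposal is correct and follows essentially the same strategy as the paper: negative definiteness of $Q$ on $\ker Z_{w,\gamma}$ via Lemma~\ref{lem_Q0_neg_def}, non-negativity of $Q$ on the class $\mathcal R$ of Lemma~\ref{lem_classification_stable_sheaves}, and then reduction of an arbitrary semistable object to $\mathcal R$ by deforming $\re w$ upward, passing to Jordan--H\"older factors at the first wall, and inducting on the discrete values of $\im Z_{w,\gamma}$ via the cone inequality. The paper's own proof is terser---it simply asserts $Q\ge 0$ on all of $\mathcal R$ in the text preceding the theorem and cites the standard argument of \cite{MS17} and \cite{Tra17} for the induction---whereas you are more explicit about case~(3) of the trichotomy and rightly flag it as the step requiring an explicit estimate.
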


\begin{proof}
  This is a standard argument (see for example \cite[Theor. 6.11]{Tra17} or
  \cite[Theor. 6.13]{MS17}). Suppose $E$ is $\sigma_{w,\gamma}$-semistable and
  $\im Z_{w,\gamma}(E)$ is minimal (note that such a minimum exists only because
  we are tilting at a rational slope, and hence the image of $\im Z_{w,\gamma}$ is
  discrete).  Then $E$ must be semistable for all $\alpha'>\alpha=\re w$, and thus
  belong to $\mathcal R$, so $Q(E)\geq 0$. 

  We proceed now by induction on $\im Z_{w,\gamma}$: suppose there is a
  semistable object $E$ for which $Q(E)<0$, and that $Q(F)\geq 0$ for all
  semistable objects with $\im Z_{w,\gamma}(F)<\im Z_{w,\gamma}(E)$. The object
  $E\notin \mathcal R$, so there is some $\alpha'>\alpha$ such that $E$ is
  strictly semistable with respect to  $\sigma_{w',\gamma}$ (where
  $w'=\alpha'+i\im w$), with Jordan-H\"older factors $E_1,...,E_m$. The
  inductive hypothesis applies to $E_i$, hence $Q(E_i)\geq 0$ for all $i$. The
  images $Z_{w',\gamma}(E_i)$ all lie in the same ray in $\C$, so for any pair
  $E_i$, $E_j$ there exists $a>0$ such that
  $Z_{w',\gamma}(E_i)-aZ_{w',\gamma}(E_j)=0$. Since $Q$ is negative definite on
  $\ker Z_{w',\gamma}$, the class $[E_i]-a[E_j]$ belongs to the negative cone of
  $Q$ in $K(\sS)$. This implies that any linear combination with positive
  coefficients of $[E_i]$ and $[E_j]$ lies in the positive cone of $Q$. Since
  this holds for any $i,j$, we must have $Q(E)\geq 0$. 

  This shows that the support property is satisfied for all semistable objects
  of positive imaginary charge. We checked above that the support property with
  respect to $Q$ is satisfied for stable objects of phase 1 as well, which
  allows us to conlcude.
\end{proof}

\begin{proof}[Proof of Theorem \ref{thm_OurStabilityCondition}]
  It remains to argue that one can drop the rationality assumptions on $H$ and
  $\im w$. The argument is carried out in detail in \cite{Bri08_k3} in the case
  of a K3 surface and follows from the discussion in \cite[Sec.
  6,7]{Bri07_triang_cat}, but it requires an observation about the heart of a
  stability condition in the geometric chamber. The analog of this observation
  is the following Lemma, which can be proven exactly as \cite[Lemma 6.20]{MS17}
\end{proof}

\begin{lemma}
  Let $(\sB,Z_{w,\gamma})$ be a stability condition for which all skyscraper
  sheaves $\sO_p\otimes\rho^i$, $j=0,...,M$ and $\sO_{q}$ for $q\neq p$ are stable of
  phase 1. Then $\sB=\Coh^{-\im w}(\sS)$. 
\end{lemma}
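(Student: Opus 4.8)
The plan is to run the argument of \cite[Lemma 6.20]{MS17} essentially verbatim, with the orbisurface $\sS$ in place of a smooth projective surface, the one adaptation being to enlarge the list of \emph{point sheaves} from the ordinary skyscrapers $\sO_q$ (for $q$ a point with trivial stabilizer) so as to also include $\sO_p\otimes\rho_i$, $i=0,\dots,M$, supported on the residual gerbe $BG$. These are precisely the simple objects among sheaves on $\sS$ supported in dimension zero, and their central charges were computed in Lemma \ref{lem_delta_of_skyscrapers}: each $Z_{w,\gamma}$-value lies on $\R_{<0}$, consistent with phase $1$. Write $\beta\coloneqq -\im w$ and $\sA\coloneqq \Coh^{\beta}(\sS)=\pair{F_{H,\beta}[1],T_{H,\beta}}$; by the results of this section $(Z_{w,\gamma},\sA)$ is a stability condition with the same central charge as $(Z_{w,\gamma},\sB)$. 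First I would invoke the standard fact that if two hearts of bounded $t$-structures on $\sD$ satisfy $\sA\subseteq\sB$, then $\sA=\sB$; so it suffices to prove the inclusion $\sA\subseteq\sB$.

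Next, since $\sB$ is closed under extensions and $\sA$ is the extension closure of $T_{H,\beta}$ and $F_{H,\beta}[1]$, which — using Harder--Narasimhan filtrations for $\mu_H$-slope stability on $\sS$, available as in \cite[Sec. 1.6]{HL10} — are themselves extension-generated by torsion sheaves, by $\mu_H$-semistable sheaves of slope $>\beta$, and by shifts $F[1]$ of $\mu_H$-semistable torsion-free sheaves of slope $\le\beta$, I would reduce the claim to placing each of these three kinds of object in $\sB$. For each one, the plan is to take its Harder--Narasimhan filtration with respect to $(Z_{w,\gamma},\sB)$ and show that every factor has phase in $(0,1]$, which forces the object into $\sB$. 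The upper bound on phases comes from the hypothesis that the point sheaves above are $\sigma$-stable of the maximal phase $1$, together with Serre duality on the smooth proper stack $\sS$ and the existence, for a general point $x$, of a surjection of sheaves onto $\sO_x$; the lower bound (phases bounded away from $0$) comes from the explicit form of $\re Z_{w,\gamma}$ on $\mu_H$-semistable sheaves combined with the Bogomolov--Gieseker inequality \eqref{eq:bg-inequality} — i.e. from the same estimates that prove Lemma \ref{lem_stab_function}. The torsion case additionally needs a short d\'evissage reducing sheaves supported on curves, and on $BG$, to the point sheaves, which is routine on the smooth stack $\sS$.

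I expect the only real obstacle to be bookkeeping rather than a new idea: one must check that the structural inputs used on a smooth projective surface — Serre duality, the description of sheaves supported in dimensions $0$ and $1$, and the identification of the simple torsion sheaves on $\sS$ with $\set{\sO_q\st q\text{ non-stacky}}\cup\set{\sO_p\otimes\rho_i\st i=0,\dots,M}$ — remain valid on the Deligne--Mumford stack $\sS$. They do, since $\sS$ is smooth and proper with a single stacky point $p$ and residual gerbe $BG$, and the hypothesis of the Lemma already supplies phase-$1$ stability for the whole list of simple torsion sheaves; hence the proof of \cite[Lemma 6.20]{MS17} transfers with no essential change, giving $\sA\subseteq\sB$ and therefore $\sB=\Coh^{-\im w}(\sS)$.
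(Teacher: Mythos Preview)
Your proposal is correct and takes essentially the same approach as the paper, which simply asserts that the lemma ``can be proven exactly as \cite[Lemma 6.20]{MS17}'' without further detail. Your write-up spells out precisely the adaptations needed (enlarging the list of point sheaves to include the $\sO_p\otimes\rho_i$, and checking that Serre duality, HN filtrations, and the Bogomolov--Gieseker input still apply on $\sS$), so it is a faithful and slightly more explicit rendering of the paper's intended argument.
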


\section{Wall-crossing: clusters and constellations}
\label{sec_WallCrossing}

Throughout this section, $\sS$ is an $ADE$-orbisurface with a single isolated stacky point $p$. We investigate wall-crossing for objects of class $v\coloneqq [\sO_x]$, where $x\in \sS$ is a closed point with trivial stabilizer. Let $\sigma^*\coloneqq\sigma_{w,\gamma}$ be one of the stability conditions of Theorem \ref{thm_OurStabilityCondition}, and denote by $\sA$ its heart $\Coh^{-\im w}(\sS)$. 

\begin{lemma}\label{lem_skyscrapers_are_simple}
Skyscraper sheaves $\sO_x$ for $x\neq p$, and sheaves $\sO_p\otimes \rho$ are simple objects in $\sA$. Therefore, they are $\sigma^*$-stable and all have phase 1. 
\end{lemma}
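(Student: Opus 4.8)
The plan is to show that each of the sheaves $\sO_x$ ($x\neq p$) and $\sO_p\otimes\rho$ is a simple object of the abelian category $\sA=\Coh^{-\im w}(\sS)$, i.e.\ admits no nonzero proper subobject in $\sA$; simplicity immediately forces $\sigma^*$-stability (a simple object has no destabilizing subobject), and the phase is then read off from $Z_{w,\gamma}$. For the phase: since these sheaves are torsion supported on points they lie in $T_{H,-\im w}\subset\sA$ (no $\mu$-semistable factor, vacuously), so $\im Z_{w,\gamma}=H\cdot\ch 1=0$; and by Lemma \ref{lem_delta_of_skyscrapers} together with the computation in the proof of Lemma \ref{lem_stab_function}, $\re Z_{w,\gamma}(\sO_p)=-\tfrac1N+\gamma(1-\tfrac1N)<0$ and $\re Z_{w,\gamma}(\sO_p\otimes\rho)=-\tfrac{\dim\rho}{N}(1+\gamma)<0$, and likewise $\re Z_{w,\gamma}(\sO_x)=-1<0$ for $x\neq p$; hence $Z_{w,\gamma}$ of each of these objects lies on the negative real axis, which is phase $1$.

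For simplicity, first I would handle $\sO_x$ with $x\neq p$. Such an $x$ has trivial stabilizer, so \'etale-locally near $x$ the stack $\sS$ is just a smooth surface and $\sO_x$ is a skyscraper; it is a simple object of $\Coh(\sS)$. A nonzero subobject $A\hookrightarrow \sO_x$ in $\sA$ fits in a triangle whose cohomology sheaves are $H^{-1}(A)=:F_{-1}$ (in $F_{H,-\im w}$, torsion-free) and $H^0(A)=:T_0$ (in $T_{H,-\im w}$), with a long exact sequence of $\Coh(\sS)$-cohomology embedding $T_0\hookrightarrow\sO_x$ and forcing $H^{-1}(\sO_x/A)$ to receive $F_{-1}$; since $\sO_x$ is a sheaf concentrated in degree $0$, the only possibilities are $A=\sO_x$ or $A$ a subobject of $\sO_x$ in $\Coh(\sS)$, which must be $0$ or $\sO_x$ by simplicity of the skyscraper. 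The same template applies to $\sO_p\otimes\rho$: because $\rho$ is irreducible, $\sO_p\otimes\rho$ is a simple object of $\Coh(\sS)$ (its $\iota^*$ is the irreducible $G$-representation $\rho$ placed at the point, and any proper $\sS$-subsheaf would cut down the fiber representation $G$-equivariantly, impossible for irreducible $\rho$). So any subobject $A$ in $\sA$ has $H^0(A)$ a sub-$\Coh(\sS)$-sheaf of $\sO_p\otimes\rho$, hence $0$ or all of it, and one checks $H^{-1}(A)=0$: a nonzero torsion-free $H^{-1}(A)$ would have to map into $\sO_p\otimes\rho/H^0(A)$ via the connecting map with torsion-free kernel, but a torsion-free sheaf has no nonzero map to a length-one sheaf with torsion-free kernel unless it is zero — so $A=0$ or $A=\sO_p\otimes\rho$.

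The main obstacle is the bookkeeping with the tilted heart: one must carefully use that objects of $\sA$ have at most two nonzero cohomology sheaves, in degrees $-1$ and $0$, with $H^{-1}\in F_{H,-\im w}$ torsion-free and $H^0\in T_{H,-\im w}$, and extract from the long exact sequence associated to a short exact sequence $0\to A\to \sO_p\otimes\rho\to B\to 0$ in $\sA$ that $H^{-1}(A)$ must vanish (equivalently, that a torsion-free sheaf cannot be a ``sub'' in $\sA$ of a point sheaf). Once this structural point is in place — which is exactly the standard argument that skyscraper-type sheaves are minimal objects of a slope-tilted heart, cf.\ \cite[Lemma~6.18 ff.]{MS17} — simplicity, and hence $\sigma^*$-stability of phase $1$, follows immediately.
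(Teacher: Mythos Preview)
Your strategy is the paper's: take a short exact sequence $0\to A\to \sO_x\to B\to 0$ in $\sA$, run the long exact sequence of cohomology sheaves, combine simplicity of $\sO_x$ (resp.\ $\sO_p\otimes\rho$) in $\Coh(\sS)$ with the torsion-pair constraints $H^{-1}\in F_{H,-\im w}$, $H^0\in T_{H,-\im w}$, and then read off phase $1$ from $Z_{w,\gamma}$ exactly as you do.

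However, your long exact sequence bookkeeping is off in a way that matters. Since $\sO_x$ is a sheaf, $H^{-1}(\sO_x)=0$ and the sequence immediately gives $H^{-1}(A)=0$; there is nothing to argue, and your proposed justification (a connecting map from $H^{-1}(A)$ into $\sO_p\otimes\rho/H^0(A)$) is not how the sequence runs. The actual content sits in the remaining piece
\[
0\to H^{-1}(B)\to H^0(A)\to \sO_x\to H^0(B)\to 0,
\]
so $H^0(A)$ does \emph{not} embed in $\sO_x$ as you claim: its image is $H^0(A)/H^{-1}(B)$, and the real work is to force $H^{-1}(B)=0$. The paper does this by casing on $H^0(B)$, which \emph{is} a quotient of $\sO_x$ in $\Coh(\sS)$ and hence $0$ or $\sO_x$ by simplicity. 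If $H^0(B)\simeq\sO_x$ then $H^{-1}(B)\simeq H^0(A)$ lies in $F_{H,-\im w}\cap T_{H,-\im w}=0$, so $A=0$. If $H^0(B)=0$ then $H^{-1}(B)$ and $H^0(A)$ differ by $[\sO_x]$, hence have the same rank and $\ch 1$ and therefore the same slope, contradicting $H^{-1}(B)\in F$, $H^0(A)\in T$ unless $H^{-1}(B)=0$ and $A\simeq\sO_x$. Once you replace your asserted embedding $H^0(A)\hookrightarrow\sO_x$ with this case analysis, your outline goes through and coincides with the paper's proof.
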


\begin{proof}
The long exact sequence of cohomology sheaves associated to a short exact sequence in $\sA$
\[ 0\to A\to \sO_x \to B \to 0 \]
shows that $H^0(B)$ is 0 or $\sO_x$. If $H^0(B)\simeq \sO_x$, then \[H^{-1}(B)\simeq H^0(A)=A=0.\]
If $H^0(B)=0$, then $H^{-1}(B)$ and $H^0(A)$ have the same slope, which is a contradiction, unless $H^{-1}(B)=0$ and $A\simeq \sO_x$. This shows that $\sO_x$ is simple in $\sA$. The argument for sheaves $\sO_p\otimes\rho$ is identical. One then observes that $Z_{w,\gamma}$ maps these objects to the negative real axis to conclude they are stable of phase 1. 
\end{proof}

\begin{lemma}
The objects of class $[\sO_x]$ in $\sA$ are skyscraper sheaves $\sO_x$, or they have a composition series whose factors are the $\sO_p\otimes \rho_i$, repeated with multiplicity $r_i=\dim \rho_i$. The former are $\sigma^*$-stable, while the latter are $\sigma^*$-semistable and all share the same $S$-equivalence class. 
\end{lemma}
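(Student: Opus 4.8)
The plan is to analyze an arbitrary object $E \in \sA$ with $[E] = [\sO_x]$ via its cohomology sheaves $H^{-1}(E)$ and $H^0(E)$, which sit in $F_{H,-\im w}[1]$ and $T_{H,-\im w}$ respectively. Since $\ch{orb}([\sO_x]) = (0, 0, 1, 0, \ldots, 0)$ (rank $0$, trivial first Chern class, $\ch 2 = 1$, and no contribution at $BG$ because $x$ has trivial stabilizer), comparing Chern characters forces $\ch 0(H^{-1}(E)) = \ch 0(H^0(E))$ and $\ch 1(H^{-1}(E)) = \ch 1(H^0(E))$. First I would rule out $H^{-1}(E) \neq 0$: if it were nonzero it is torsion-free (it lies in $F_{H,-\im w}$) of some positive rank $r$, hence so is $H^0(E)$ of rank $r$, but $H^0(E)\in T_{H,-\im w}$ means all its slope-semistable factors have slope $> -\im w$ while those of $H^{-1}(E)$ have slope $\le -\im w$; with equal ranks and equal $\ch 1$ this is a contradiction unless $r=0$. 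So $E = H^0(E)$ is a genuine sheaf of rank $0$ with $\ch 1 = 0$, i.e.\ supported on points, with $\ch 2 = 1$.

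Next I would classify rank-zero point sheaves of class $[\sO_x]$. Such a sheaf $F$ has a composition series in $\Coh(\sS)$ whose simple factors are skyscrapers $\sO_q$ ($q \neq p$) and the $\sO_p \otimes \rho_i$, $i = 0,\ldots,M$. Using Lemma \ref{lem_delta_of_skyscrapers}, the factor $\sO_q$ contributes $(\ch 2, \delta) = (1, 0)$, the factor $\sO_p \otimes \rho_i$ contributes $(\ch 2, \delta) = (r_i/N, -r_i/N)$ for $i \neq 0$, and $\sO_p$ contributes $(1/N, 1 - 1/N)$. Matching $\ch 2(F) = 1$ and $\delta(F) = 0$ (the $\delta$-value of $[\sO_x]$, since $\dL\iota^*\sO_x = 0$) against these contributions: if there is at least one $\sO_q$ factor with $q\neq p$, then since all contributions to $\ch 2$ are non-negative and the $\sO_q$ already gives $1$, there can be no other factor, so $F = \sO_q$ — and then $q$ is forced to equal $x$ by the first Chern class / support. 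Otherwise $F$ is supported entirely at $p$; writing $F$ with $a_0$ copies of $\sO_p$ and $a_i$ copies of $\sO_p\otimes\rho_i$, the two equations become $\sum_{i} a_i r_i = N$ (from $\ch 2$, where $r_0 = 1$) and $a_0(1 - 1/N) - \sum_{i\neq 0} a_i r_i/N = 0$, i.e.\ $a_0 N - a_0 - \sum_{i\neq 0}a_i r_i = 0$; subtracting gives $a_0 N - N = 0$, hence $a_0 = 1$ and then $\sum_{i\neq 0} a_i r_i = N - 1 = \sum_{i\neq 0} r_i^2$. Here I would invoke the fact that $\sum_{i\neq 0} r_i^2 = N-1$ and that each $a_i \ge r_i$ is not automatic — rather, I expect one shows that the only solution compatible with the McKay quiver structure (the $[\sO_p\otimes\rho_i]$ generate $\sC$, and the class must be that of a $G$-cluster at $p$) is $a_i = r_i$ for all $i$; concretely, $[\sO_x] = [\Phi^{-1}(\sO_y)]$ for $y$ in the exceptional fibre equals $\sum_{i=0}^M r_i [\sO_p \otimes \rho_i]$ since the regular representation decomposes as $\bigoplus r_i \rho_i$ and clusters are $G$-constellations, and this is the unique effective such combination with the right $\ch 2$ and $\delta$.

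Finally I would establish the stability claims. For $F = \sO_x$ with $x \neq p$ this is Lemma \ref{lem_skyscrapers_are_simple}. For $F$ supported at $p$: by Lemma \ref{lem_skyscrapers_are_simple} each $\sO_p\otimes\rho_i$ is $\sigma^*$-stable of phase $1$, and $Z_{w,\gamma}(F)$ lies on the negative real axis (phase $1$) as well, so $F$ is automatically $\sigma^*$-semistable, and its Jordan–Hölder factors in $\PP(1)$ are exactly the $\sO_p\otimes\rho_i$ with multiplicity $a_i = r_i$ — independent of $F$ — whence all such $F$ share the same $S$-equivalence class. The main obstacle is the combinatorial step identifying $a_i = r_i$ as the unique solution: I would handle it by passing through $\Phi$ to $\tilde S$, where $[\sO_x]$ maps to $[\sO_y]$ for a point $y$ on the exceptional curve, whose class in $K(\tilde S)$ visibly determines the $a_i$ uniquely, rather than solving the Diophantine system by hand.
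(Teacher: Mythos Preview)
Your reduction to a genuine sheaf via the cohomology sheaves and the slope contradiction is exactly the paper's argument: from $[E]=[H^0(E)]-[H^{-1}(E)]$ one reads off equal rank and equal $\ch 1$, hence equal slope, which is incompatible with $H^{-1}(E)\in F_{H,-\im w}$ and $H^0(E)\in T_{H,-\im w}$ unless $H^{-1}(E)=0$. The stability claims also match: simplicity of the $\sO_p\otimes\rho_i$ gives strict semistability with a common Jordan--H\"older filtration.

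Where you diverge from the paper is in pinning down the composition multiplicities. You compute $\ch_{orb}(\sO_x)=(0,0,1,0,\ldots,0)$ but then use only the two scalars $\ch 2$ and $\delta$; you correctly observe this underdetermines the $a_i$ (already for $A_2$ one has $(a_1,a_2)\in\{(2,0),(1,1),(0,2)\}$) and propose to rescue it via McKay. The paper avoids this detour entirely: it simply notes $[\sO_x]=\sum_i r_i[\sO_p\otimes\rho_i]$ (a free $G$-orbit carries the regular representation, so the equality holds in $K(\sS)$) and that the classes $[\sO_p\otimes\rho_i]$ are linearly independent, so the multiplicities are forced to be $r_i$. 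Equivalently, you already recorded the extra constraints you need: the ``$a_j=0$'' part of $\ch_{orb}(\sO_x)$ says $[\dL\iota^*F]=0$, i.e.\ $(2\cdot\mathbbm 1-V)\otimes\bigl(\sum_i a_i\rho_i\bigr)=0$ in $R(G)$, whose kernel is exactly the line spanned by the regular representation; combined with $\ch 2=1$ this gives $a_i=r_i$ without any appeal to $\Phi$.

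One small slip: the clause ``then $q$ is forced to equal $x$ by the first Chern class / support'' is wrong. All skyscrapers at non-stacky points share the same numerical (and $K$-theoretic) class, so the argument only yields $E\cong\sO_{x'}$ for \emph{some} $x'\neq p$; the paper states it this way as well.
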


\begin{proof}
The statement about stability follows immediately from Lemma \ref{lem_skyscrapers_are_simple}. What needs justification is the "only" part of the statement: let $A\in \sA$ be a complex with $[\sO_x]=[A]=[H^0(A)]-[H^{-1}(A)]$. Then $H^0(A)$ and $H^{-1}(A)$ have the same slope, since their classes differ by a codimension 2 summand. This is only possible if $H^{-1}(A)=0$ and $A$ is a sheaf of class $[\sO_x]$. If $A$ is supported away from $p$, then $A=\sO_x$ for some $x$. If $A$ is supported at $p$, then it has a composition series with factors $\sO_p\otimes \rho_i$. The multiplicities must be $r_i$ since $[\sO_x]=\sum_ir_i[\sO_p\otimes\rho_i]$.
\end{proof}

Now apply Bridgeland's deformation result \ref{thm_Deformation_Bridgeland} to $\sigma^*$, and obtain a neighborhood $\Delta\subset \Stab(\sS)$ and a stability condition $\sigma_0=(Z_0,\sA_0)\in \Delta$ such that $Z_0([\sO_x])=-1$ and $\phi_0(\sO_p\otimes \rho)<1$ for all $\rho\neq\mathbbm 1$. Denote by $\mathcal V\simeq [W/G]$, with $W=\C^2$, the chart of $\sS$ around the stacky point, and recall Definition  \ref{def_cluster_constellation}.

\begin{proposition}\label{prop_recoverGHilb}
The moduli space $M_{\sigma^*}(v)$ of $\sigma^*$-semistable objects is isomorphic to the coarse moduli space of $\sS$, and $M_{\sigma_0}(v)$ is isomorphic to its minimal resolution. The wall-crossing morphism 
\[ M_{\sigma_0}(v) \to M_{\sigma^*}(v) \]
sending $\sigma_0$-semistable objects to their $\sigma^*$-S-equivalence class is the contraction of the exceptional divisors.
\end{proposition}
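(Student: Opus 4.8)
\emph{Plan of proof.} The idea is to transport the whole picture to the minimal resolution via the McKay equivalence $\Phi\colon\sD\xrightarrow{\sim}D^b(\Coh(\tilde S))$, which preserves numerical classes. By the computations recalled in Section~\ref{sec_McKay_corr}, $v=[\sO_x]=\sum_{i=0}^M r_i[\sO_p\otimes\rho_i]$, so $\tilde v\coloneqq\Phi(v)=[\sO_{\tilde y}]$ is the class of a closed point of $\tilde S$; moreover $\Phi$ identifies $M_{\sigma^*}(v)$ with $M_{\tau^*}(\tilde v)$ and $M_{\sigma_0}(v)$ with $M_{\tau_0}(\tilde v)$, compatibly with wall-crossing, where $\tau^*,\tau_0$ denote the stability conditions on $D^b(\tilde S)$ obtained by transporting $\sigma^*,\sigma_0$ along $\Phi$. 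The structure sheaf of the diagonal of $\tilde S$ is a flat family over $\tilde S$ of skyscrapers, all of class $\tilde v$; pulling it back along $\Phi^{-1}$ yields a flat family $\mathcal E$ over $\tilde S$ of objects of $\sD$ of constant class $v$, with $\mathcal E_{\tilde y}\cong\Phi^{-1}(\sO_{\tilde y})$. This $\mathcal E$ will be the common source of classifying morphisms $a\colon\tilde S\to M_{\sigma^*}(v)$ and $b\colon\tilde S\to M_{\sigma_0}(v)$, and the crux is to understand which $\mathcal E_{\tilde y}$ are (semi)stable for each of the two conditions.

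First I would identify $M_{\sigma^*}(v)$. The two lemmas preceding the statement classify the $\sigma^*$-semistable objects of class $v$ as the skyscrapers $\sO_x$, $x\in\sS$ of trivial stabilizer (which are $\sigma^*$-stable), together with the strictly semistable objects supported at $p$, all lying in the single $S$-equivalence class of $P\coloneqq\bigoplus_{i=0}^M(\sO_p\otimes\rho_i)^{\oplus r_i}$. Hence the $\sigma^*$-stable locus of $M_{\sigma^*}(v)$ is $\sS\setminus\{p\}$, which $\pi$ identifies with $S\setminus\{q\}$ for $q\coloneqq\pi(p)$, and the moduli space is this open set with the single extra point $[P]$. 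The family $\mathcal E$ induces $a\colon\tilde S\to M_{\sigma^*}(v)$ restricting to the isomorphism $\tilde S\setminus f^{-1}(q)\cong S\setminus\{q\}$ and contracting the connected exceptional curve $f^{-1}(q)=\bigcup_iC_i$ to $[P]$ (its fibres over $f^{-1}(q)$ are $G$-clusters, iterated extensions of the $\sO_p\otimes\rho_i$, hence $\sigma^*$-semistable with $S$-equivalence class $[P]$). Granting that $M_{\sigma^*}(v)$ is a normal projective surface, $a$ is then a proper birational morphism from a smooth surface contracting exactly $f^{-1}(q)$ to a point, and uniqueness of such a surface contraction forces $M_{\sigma^*}(v)\cong S$ with $a=f$.

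Next I would analyse $M_{\sigma_0}(v)$; here lies the real work. Away from $p$ the sheaves $\sO_x$ are $\sigma^*$-stable and hence remain $\sigma_0$-stable under the small deformation, giving the open subset $\sS\setminus\{p\}\cong S\setminus\{q\}$. For the objects of class $v$ supported at $p$: by the classification of such objects each of them is an iterated extension of the $\sigma^*$-stable objects $\sO_p\otimes\rho_i$ with multiplicities $r_i$, so, $\sigma_0$ being a small perturbation of $\sigma^*$ past the wall on which these objects share a phase, deciding $\sigma_0$-stability among such extensions is the local representation-theoretic question of classifying $\theta$-stable $G$-constellations on $\mathbb C^2$ for the weight $\theta$ read off from $Z_{w,\gamma}$ and the deformation (the condition $\phi_0(\sO_p\otimes\rho_i)<1$ for $i\neq0$ placing $\theta$ in the chamber of $G\text{-Hilb}$). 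By \cite{Kin94} and \cite{BKR01} (see also Section~\ref{sec_ComparisonBridgeland}) the resulting stable $G$-constellations supported at the origin are exactly the $G$-clusters, and these, again by \cite{BKR01}, are precisely the $\Phi^{-1}(\sO_{\tilde y})=\mathcal E_{\tilde y}$ with $\tilde y\in f^{-1}(q)$; equivalently, $\tau_0$ is a geometric stability condition for the class $\tilde v$, so every skyscraper of $\tilde S$ is $\tau_0$-stable and the standard identification of moduli of point-like objects in the geometric chamber of a surface (cf.\ \cite{MS17}) gives $M_{\tau_0}(\tilde v)\cong\tilde S$. Either way, the $\sigma_0$-semistable objects of class $v$ are exactly the $\mathcal E_{\tilde y}$, $\tilde y\in\tilde S$, all $\sigma_0$-stable; so $b\colon\tilde S\to M_{\sigma_0}(v)$ is bijective, and since $M_{\sigma_0}(v)$ then has dimension $2$ while its tangent space at $\mathcal E_{\tilde y}$ is $\Ext^1_\sD(\mathcal E_{\tilde y},\mathcal E_{\tilde y})\cong\Ext^1_{\tilde S}(\sO_{\tilde y},\sO_{\tilde y})$, of dimension $2$, the space $M_{\sigma_0}(v)$ is smooth and $b$ is an isomorphism; hence $M_{\sigma_0}(v)\cong\tilde S$.

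Finally, the wall-crossing morphism $\psi\colon M_{\sigma_0}(v)\to M_{\sigma^*}(v)$ sends a $\sigma_0$-semistable object to its $\sigma^*$-$S$-equivalence class; under the above identifications it sends $\sO_x$ to $\sO_x$ for $x\neq p$ and every $G$-cluster $\mathcal E_{\tilde y}$, $\tilde y\in f^{-1}(q)$, to $[P]$ (its $\sigma^*$-Jordan--H\"older factors being the $\sO_p\otimes\rho_i$ with multiplicities $r_i$). Thus $\psi\circ b=a$, so $\psi$ is identified with $a=f\colon\tilde S\to S$, the minimal resolution, i.e.\ with the contraction of the exceptional divisors. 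I expect the main obstacle to be exactly the identification in the previous paragraph of the $\sigma_0$-stable objects supported at $p$ with the $G$-clusters, equivalently with the exceptional skyscrapers on $\tilde S$: this is where the local representation theory and the Bridgeland--King--Reid correspondence must be invoked, and where one must check carefully that the chamber cut out by the defining inequalities of $\sigma^*$ together with the deformation producing $\sigma_0$ is indeed the one yielding $G\text{-Hilb}$, rather than a moduli space of genuine non-cluster $G$-constellations; the remaining scheme-theoretic points (projectivity and normality of the moduli spaces, uniqueness of the surface contraction) are then standard.
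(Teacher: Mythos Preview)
Your argument is correct and shares the same key step with the paper: the identification of the $\sigma_0$-stable objects of class $v$ supported at $p$ with $G$-clusters, forced by the phase condition $\phi_0(\sO_p\otimes\rho_i)<1$ for $i\neq 0$ (equivalently, by landing in the $G$-Hilb chamber for King stability). The paper makes this identification in one line: since $\sO_p\otimes\mathbbm 1$ has strictly largest phase among the simple factors, no proper subobject of a $\sigma_0$-stable object can contain it, which is exactly the cluster condition; it then concludes that $M_{\sigma_0}(v)$ is \emph{locally} isomorphic to $G\text{-Hilb}(W)$ on the chart $[W/G]$, hence to the minimal resolution by \cite{BKR01}, and glues with $S\setminus V$ away from $p$. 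The description of $M_{\sigma^*}(v)$ is obtained in the same local manner, observing that at $\sigma^*$ all clusters collapse to a single $S$-equivalence class, so the exceptional locus of $G\text{-Hilb}(W)$ contracts to a point and one recovers $W/G$ locally.

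By contrast, you transport the whole problem to $\tilde S$ via $\Phi$, build a universal family $\mathcal E$ over $\tilde S$, and then argue globally with classifying morphisms $a,b$, invoking uniqueness of surface contractions and an $\Ext^1$ smoothness check. This is heavier but more self-contained on the scheme-theoretic side; the paper's local-chart argument is terser and avoids your assumptions on normality/projectivity of the moduli spaces, but is correspondingly sketchier about how the local pieces glue. One caveat: your aside that ``$\tau_0$ is a geometric stability condition'' on $\tilde S$ is not immediate (the heart $\Phi(\Coh^{-\im w}(\sS))$ is a tilt of $\zPer(\tilde S/S)$, not an obvious tilt of $\Coh(\tilde S)$), so it is your King/BKR argument, not the geometric-chamber shortcut, that actually carries the identification at $p$; with that understood, your proof goes through.
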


\begin{proof}
The stability condition $\sigma_0$ ensures that no proper subsheaf of a $\sigma_0$-stable object contains $\sO_p\otimes \mathbbm 1$ in its composition series. Then, objects of $M_{\sigma_0}(v)$ supported at $p$ are exactly $G$-clusters, and $M_{\sigma_0}(v)$ is locally isomorphic to $G$-Hilb$(W)$. The $G$-Hilbert scheme is the minimal resolution of $W/G$ \cite{BKR01}, with exceptional locus parameterizing clusters supported at $p$. 

At $\sigma^*$, clusters become strictly semistable with the same $S$-equivalence class. In other words, the exceptional divisor of $G$-Hilb$(W)$ is contracted to a point, showing that $M_{\sigma^*}(v)$ is locally isomorphic to $V\coloneqq W/G$. Away from this chart,  $M_{\sigma^*}(v)$ is isomorphic to $S\setminus V$. This shows that $M_{\sigma^*}(v)\simeq S$ and concludes the proof of the proposition.
\end{proof}

In a completely analogous way, we can define deformations $\sigma_i=(Z_i,\sA_i)$ of $\sigma^*$ such that $Z_i([\sO_x])=-1$ and $\phi_0(\sO_p\otimes \rho)<1$ for all $\rho\neq \rho_i$. One argues then as in Prop. \ref{prop_recoverGHilb}, to show that the moduli spaces $M_{\sigma_i}(v)$ are moduli spaces of $G$-constellations, and are crepant resolutions of $M_{\sigma^*}(v)$. This is not surprising, as the stability conditions $\sigma_i$ correspond to certain generic stability parameters on quiver representations, as illustrated in the next subsection. Proposition \ref{prop_recoverGHilb} is then an analog of a well-known result in King's theory of stability for quiver representations, see \cite{Kin94} and \cite{CR04}.

\subsection{Comparison with \cite{Bri09_kleinian}}
\label{sec_ComparisonBridgeland}

Let $\sB$ be the finite length abelian subcategory of $\Coh(\sS)$ generated by the simple sheaves $\sO_p\otimes \rho_i$, with $i\neq 0$. Denote by $\sT$ the triangulated subcategory of $D(\sS)$ consisting of complexes whose cohomologies lie in $\sB$, the main result of \cite{Bri09_kleinian} is the description of a connected component of $\Stab(\sT)$. 

The Grothendieck group $K(\sT)$ endowed with the Euler pairing is a root lattice and the classes $\alpha_i\coloneqq [\sO_p\otimes\rho_i]$ are roots. Therefore, the space $\Hom(K(\sT),\C)$ of central charges of $\Stab(\sT)$ is identified with the Cartan algebra of the root system, and admits an action of the Weyl group which is free on the set of regular orbits
\[ \mathfrak{h}^{\mathrm{reg}}=\{ Z\in \Hom(K(\sT),\C) \mid Z(\alpha)\neq 0  \mbox{ for all roots }\alpha \in K(\sT)\}. \]
By \cite[Lemma 3.1]{Bri09_kleinian}, there is a region $U$ in $\Stab(\sT)$, homeomorphic to a complexified Weyl chamber in $\mathfrak{h}^{\mathrm{reg}}$, containing stability conditions $(Z,\sB)$ with $\im Z(\sO_p\otimes\rho_i)>0$ for all $i\neq 0$. Moreover, the central charge map $\varpi\colon\Stab(\sT)  \to \Hom(K(\sT),\C)$ is a covering space over $\mathfrak{h}^{\mathrm{reg}}$ \cite[Prop. 3.3]{Bri09_kleinian}.

Since $Z_{0|K(\sT)}$ satisfies $\im Z_{0|K(\sT)}(\sO_p\otimes\rho_i)>0$ for all $i\neq 0$, the stability condition $\sigma_0$ gives rise to a stability condition $(Z_{0|K(\sT)},\sB)\in U$ by \cite[Lemma 3.1]{Bri09_kleinian}. More generally, stability conditions $\sigma=(Z_\sigma,\sA_\sigma) \in \Delta \subset \Stab(D(\sS))$, satisfy $Z_{\sigma|K(\sT)}\in \mathfrak{h}^{\mathrm{reg}}$.

Therefore, restriction of central charge defines a map $\Delta \to \mathfrak{h}^{\mathrm{reg}}$, which lifts to a map 
\[\delta\colon\Delta \to \Stab(\sT).\]

The boundary $\partial U$ decomposes as $\partial U=\cup_i U_i$, where 
\[ U_i\coloneqq \{ \tau\in \partial U \mid \im Z_\tau(\sO_p\otimes\rho_i)=0 \}.\] 
This shows that $\delta(\sigma^*)\in \cap_i U_i$. Moreover, $\delta^{-1}(U_i)$ is a wall for class $v$, because $\sigma\in \delta^{-1}(U_i)$ satisfies $Z_\sigma(v)// Z_\sigma(\sO_p\otimes\rho_i)$. Summarizing:

\begin{proposition}
There exists a map $\delta\colon \Delta\to \Stab(\sT)$. 
The preimages of the components of $\partial U$ along $\delta$ are walls for class $v$ in $\Stab(\sS)$, and $\sigma^*$ lies in their intersection.
\end{proposition}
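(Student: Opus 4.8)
The plan is to realise $\delta$ as a lift of the restriction-of-central-charges map along Bridgeland's covering space over $\mathfrak h^{\mathrm{reg}}$, then to read the location of $\sigma^*$ and the wall statement off the boundary structure of $U$. First the map $\delta$. The inclusion $K(\sT)\hookrightarrow K(D(\sS))$ induces a linear map $\mathrm{res}\colon \Hom(\Lambda,\C)\to \Hom(K(\sT),\C)$, and I claim it is surjective: by Lemma~\ref{lem_delta_of_skyscrapers} (equivariant Koszul resolution) the $BG$-component of $\ch{orb}(\sO_p\otimes\rho_i)$ is $(2\cdot\mathbbm 1-V)\otimes\rho_i$, whose coefficient vector in the basis of irreducibles is the $i$-th row of the affine Cartan matrix of the singularity; the rows indexed $1,\dots,M$ are linearly independent (the unique linear relation among all $M{+}1$ rows is $\sum r_i(\mathrm{row}_i)=0$ with $r_0=1\neq 0$), so $\alpha_1,\dots,\alpha_M$ are independent in $\Lambda\otimes\Q$ and $\mathrm{res}$ is onto. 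Composing with the central-charge map $\varpi$ of $\Stab(D(\sS))$ gives a continuous map which is a submersion, since $\varpi$ is a local homeomorphism (Theorem~\ref{thm_BriLocalHomeom}) and $\mathrm{res}$ is surjective linear. By Lemma~\ref{lem_delta_of_skyscrapers}, $Z_{\sigma^*}(\alpha_i)=-(1+\gamma)r_i/N\in\R_{<0}$, hence $Z_{\sigma^*}(\alpha)=\sum m_j Z_{\sigma^*}(\alpha_j)\neq 0$ for every root $\alpha=\sum m_j\alpha_j$ (the $m_j$ share a sign); thus $\mathrm{res}(\varpi(\sigma^*))\in\mathfrak h^{\mathrm{reg}}$, and since $\mathfrak h^{\mathrm{reg}}$ is open I may shrink $\Delta$ so that $\mathrm{res}\circ\varpi(\Delta)\subset\mathfrak h^{\mathrm{reg}}$. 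Now $\varpi_{\sT}$ is a covering over $\mathfrak h^{\mathrm{reg}}$ by \cite[Prop. 3.3]{Bri09_kleinian} and $\Delta$ is a disc, hence simply connected, so $\mathrm{res}\circ\varpi|_\Delta$ lifts to a continuous $\delta\colon\Delta\to\Stab(\sT)$, uniquely once we set $\delta(\sigma_0)=(Z_{0|K(\sT)},\sB)\in U$ (a stability condition by \cite[Lemma 3.1]{Bri09_kleinian}, as $\im Z_{0|K(\sT)}(\alpha_i)>0$). Pulling back the submersion $\mathrm{res}\circ\varpi$ through the local homeomorphism $\varpi_{\sT}$, the map $\delta$ is itself a submersion. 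This proves the first sentence of the Proposition.

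Next, the location of $\sigma^*$. We have $\varpi_{\sT}(\delta(\sigma^*))=Z_{\sigma^*|K(\sT)}$, and $\im Z_{\sigma^*}(\alpha_i)=0$ for all $i$ by the computation above; since $\delta(\sigma_0)\in U$ and $\delta$ is continuous on the connected $\Delta$, we get $\delta(\sigma^*)\in\overline U$, and the vanishing of all the $\im Z_{\sigma^*}(\alpha_i)$ forces $\delta(\sigma^*)\in\bigcap_i U_i$. Hence $\sigma^*\in\delta^{-1}(U_i)$ for each $i=1,\dots,M$. Moreover each $U_i$ is a real hypersurface of $\Stab(\sT)$, so $\delta^{-1}(U_i)$ is a real hypersurface of $\Delta$ because $\delta$ is a submersion; the $U_i$ are pairwise distinct hypersurfaces through $\delta(\sigma^*)$ (the linear conditions $\im Z(\alpha_i)=0$ are independent, the $\alpha_i$ being independent in $K(\sT)\otimes\R$), so the $\delta^{-1}(U_i)$ are pairwise distinct and all pass through $\sigma^*$, giving $\sigma^*\in\bigcap_{i=1}^{M}\delta^{-1}(U_i)$.

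It remains to see that each $\delta^{-1}(U_i)$ is genuinely a wall for $v=[\sO_x]$, i.e. that the set of $\sigma$-semistable objects of class $v$ jumps across it. On $\delta^{-1}(U_i)$ one has $Z_\sigma(\alpha_i)=Z_{\delta(\sigma)}(\alpha_i)\in\R_{<0}$, so $\sO_p\otimes\rho_i$ (which is $\sigma$-semistable near $\sigma^*$ by Lemma~\ref{lem_skyscrapers_are_simple} and openness of semistability) has $\sigma$-phase $1$, whereas for $\sigma$ on the $U$-side of the wall $\im Z_\sigma(\alpha_i)>0$ and its phase drops below $1$. By Proposition~\ref{prop_recoverGHilb} and its analogues for the $\sigma_i$, the $\sigma$-semistable objects of class $v$ supported at $p$ are the $G$-constellations, whose only possible sub- and quotient classes are the $\alpha_j$; combined with $v=\sum_{j=0}^{M}r_j\alpha_j$ this identifies $\delta^{-1}(U_i)$, near $\sigma^*$, with the locus where $Z_\sigma(v)$ and $Z_\sigma(\sO_p\otimes\rho_i)$ are aligned, and crossing it replaces the $\sT$-heart by its spherical reflection at $\sO_p\otimes\rho_i$ (Bridgeland's description of the faces of $\partial U$), changing which $G$-constellations of class $v$ lie in the heart; concretely $M_\sigma(v)$ gains or loses the exceptional curve $C_i$. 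Hence each $\delta^{-1}(U_i)$ is a wall for $v$, and $\sigma^*$ lies in all of them.

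The main obstacle is this last step: upgrading ``$\delta^{-1}(U_i)$ is a real hypersurface along which the $\sT$-heart reflects at $\sO_p\otimes\rho_i$'' to ``$\delta^{-1}(U_i)$ is a genuine wall for the class $v$ in $\Stab(D(\sS))$''. This needs a careful match between Bridgeland's boundary faces $U_i$ of the geometric chamber $U$ and the local (near $\sigma^*$) classification of $\sigma$-semistable objects of class $v$ from Section~\ref{sec_WallCrossing}, tracking how the $G$-constellations and their sub/quotients $\sO_p\otimes\rho_i$ move as $\sigma$ crosses $\delta^{-1}(U_i)$. Everything else — surjectivity of $\mathrm{res}$, regularity of $\mathrm{res}(\varpi(\sigma^*))$, the covering lift, and the submersion property — is routine given Theorem~\ref{thm_BriLocalHomeom}, Lemma~\ref{lem_delta_of_skyscrapers} and the cited results of \cite{Bri09_kleinian}.
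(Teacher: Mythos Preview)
Your approach is essentially the paper's: restrict central charges to $K(\sT)$, land in $\mathfrak h^{\mathrm{reg}}$, and lift through Bridgeland's covering $\varpi_{\sT}\colon\Stab(\sT)\to\mathfrak h^{\mathrm{reg}}$ using that $\Delta$ is simply connected, with $\delta(\sigma_0)=(Z_{0|K(\sT)},\sB)\in U$ as basepoint. You supply a good deal more detail than the paper does---surjectivity of $\mathrm{res}$ via the rows of the affine Cartan matrix, the explicit value $Z_{\sigma^*}(\alpha_i)=-(1+\gamma)r_i/N$ to check regularity, the submersion property of $\delta$---but the skeleton is identical, and your verification that $\delta(\sigma^*)\in\bigcap_iU_i$ is the paper's argument verbatim.

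The only substantive difference is in the wall claim. The paper's entire justification is a single line: for $\sigma\in\delta^{-1}(U_i)$ one has $Z_\sigma(v)\parallel Z_\sigma(\sO_p\otimes\rho_i)$, and since $\sO_p\otimes\rho_i$ occurs among the Jordan--H\"older factors of the $G$-constellations of class $v$ (Lemma~\ref{lem_skyscrapers_are_simple} and the lemma following it), this alignment is precisely the numerical wall condition. Your attempt to upgrade this to a \emph{genuine} wall---by invoking the spherical reflection of the $\sT$-heart across $U_i$ and the resulting change in $M_\sigma(v)$---goes beyond what the paper establishes. You are right to flag this last step as the delicate one; the paper simply does not attempt it and takes the parallelism of charges as sufficient for ``wall''. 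So your extra work on that point is supererogatory rather than a gap to be filled.
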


\bibliographystyle{amsplain}
\bibliography{./bibliography}

\end{document}